\theoremstyle{plain}
    \newtheorem{thm}{Theorem}[section]
    \newtheorem{prop}[thm]{Proposition}
    \newtheorem{lemma}[thm]{Lemma}
    \newtheorem{cor}[thm]{Corollary}
    \newtheorem{subsec}[thm]{}
        \newtheorem*{propa}{Proposition}
\theoremstyle{definition}
    \newtheorem{defn}[thm]{Definition}
    \newtheorem{example}[thm]{Example}
\theoremstyle{remark}
        \newtheorem{remark}[thm]{Remark}
    \newtheorem{ack}[thm]{Acknowledgements}
\newenvironment{myeq}[1][]
{\stepcounter{thm}\begin{equation}\tag{\thethm}{#1}}
{\end{equation}}
\newcommand{\mydiagram}[2][]
{\stepcounter{thm}\begin{equation}
     \tag{\thethm}{#1}\vcenter{\xymatrix{#2}}\end{equation}}
\newenvironment{mysubsection}[2][]
{\begin{subsec}\begin{upshape}\begin{bfseries}{#2.}
\end{bfseries}{#1}}
{\end{upshape}\end{subsec}}
\newenvironment{mysubsect}[2][]
{\begin{subsec}\begin{upshape}\begin{bfseries}{#2\vsn.}
\end{bfseries}{#1}}
{\end{upshape}\end{subsec}}
\newcommand{\wh}{\ -- \ }
\newcommand{\w}[2][ ]{\ \ensuremath{#2}{#1}\ }
\newcommand{\ww}[1]{\ \ensuremath{#1}}
\newcommand{\wb}[2][ ]{\ (\ensuremath{#2}){#1}\ }
\newcommand{\wref}[2][ ]{\ \eqref{#2}{#1}\ }
\newcommand{\hsp}{\hspace*{7 mm}}
\newcommand{\hsm}{\hspace{2 mm}}
\newcommand{\vsn}{\vspace{1 mm}}
\newcommand{\vsm}{\vspace{3 mm}}
\newcommand{\xra}[1]{\xrightarrow{#1}}
\newcommand{\xRa}[1]{\stackrel{#1}{\Longrightarrow}}
\newcommand{\Ra}{\Rightarrow}
\newcommand{\hra}{\hookrightarrow}
\newcommand{\bllt}{\raisebox{0.6ex}{\hbox{\circle*{3}}}}
\newcommand{\rest}[1]{\lvert_{#1}}
\newcommand{\ssr}{\!\!\!\!\!\!\!\!\!\!\!\!\!\!}
\newcommand{\lra}[1]{\langle{#1}\rangle}
\newcommand{\EQUIV}{\Leftrightarrow}
\newcommand{\adj}[2]{\substack{{#1}\\ \rightleftharpoons \\ {#2}}}
\newcommand{\ocmp}[1]{{#1}^{\otimes}}
\newcommand{\tens}[2]{#1\,\tiund{#2}\,#1}
\newcommand{\toto}{\substack{{\to}\\ \to}}
\newcommand{\dm}{\partial_{\max}}
\newcommand{\ovl}[1]{\overline{#1}}
\newcommand{\bsim}{/\!\!\sim}
\newcommand{\ch}{\circ_{h}}
\newcommand{\cv}{\boxplus}
\newcommand{\tiund}[1]{{\times}_{#1}}
\newcommand{\ab}{\operatorname{ab}}
\newcommand{\Arr}{\operatorname{Arr}}
\newcommand{\BW}{\operatorname{BW}}
\newcommand{\cf}{\operatorname{cf}}
\newcommand{\colim}{\operatorname{colim}}
\newcommand{\cub}{\operatorname{cub}}
\newcommand{\Dec}{\operatorname{Dec}\,}
\newcommand{\diag}{\operatorname{diag}}
\newcommand{\Fac}{\operatorname{Fac}}
\newcommand{\ho}{\operatorname{ho}}
\newcommand{\hocolim}{\operatorname{hocolim}}
\newcommand{\Hom}{\operatorname{Hom}}
\newcommand{\Id}{\operatorname{Id}}
\newcommand{\Ker}{\operatorname{Ker}}
\newcommand{\Obj}{\operatorname{Obj}\,}
\newcommand{\op}{^{\operatorname{op}}}
\newcommand{\orr}{\operatorname{or}}
\newcommand{\os}{\orr^{\ast}}
\newcommand{\res}{_{\operatorname{res}}}
\newcommand{\Str}{\operatorname{st}}
\newcommand{\csk}[1]{\operatorname{csk}_{#1}}
\newcommand{\dsk}[1]{\operatorname{dsk}_{#1}}
\newcommand{\sk}[1]{\operatorname{sk}_{#1}}
\newcommand{\skc}[1]{\sk{#1}^{c}}
\newcommand{\skh}[1]{\operatorname{sk}^{h}_{#1}}
\newcommand{\skv}[1]{\operatorname{sk}^{v}_{#1}}
\newcommand{\supar}[1]{\overset{#1}{-\!\!-\!\!\!\rightarrow}}
\newcommand{\map}{\operatorname{map}}
\newcommand{\mape}[2]{\map\sp{#1}\sb{#2}}
\newcommand{\Po}[1]{P\sp{#1}}
\newcommand{\C}{{\mathcal C}}
\newcommand{\D}{{\mathcal D}}
\newcommand{\E}{{\mathcal E}}
\newcommand{\F}{{\mathcal F}}
\newcommand{\Fs}{\F_{s}}
\newcommand{\G}{{\mathcal G}}
\newcommand{\K}{{\mathcal K}}
\newcommand{\M}{{\mathcal M}}
\newcommand{\N}{{\mathcal N}}
\newcommand{\tN}{\tilde{\N}}
\newcommand{\dN}{\N_{d}}
\newcommand{\eN}{\widehat{\dN}}
\newcommand{\OO}{{\mathcal O}}
\newcommand{\PP}{{\mathcal Q}}
\newcommand{\cP}{{\mathcal P}}
\newcommand{\dP}{\PP_{t}}
\newcommand{\Ph}{\PP\sp{h}}
\newcommand{\hP}{\widehat{\PP}}
\newcommand{\Ss}{{\mathcal S}}
\newcommand{\PS}[1]{\Po{#1}\Ss}
\newcommand{\St}{\PS{2}}
\newcommand{\Scf}{\Ss_{\cf}}
\newcommand{\T}{{\EuScript Top}}
\newcommand{\Tam}{{\EuScript Tam_{2}}}
\newcommand{\V}{{\mathcal V}}
\newcommand{\td}{\tilde{d}}
\newcommand{\tphi}{\tilde{\phi}}
\newcommand{\tpsi}{\tilde{\psi}}
\newcommand{\hy}[2]{{#1}\text{-}{#2}}
\newcommand{\Ab}{{\EuScript Ab}}
\newcommand{\Abgp}{{\Ab\Gp}}
\newcommand{\Aug}{{\EuScript Aug}\,}
\newcommand{\Cat}{{\EuScript Cat}}
\newcommand{\cat}{\operatorname{cat}}
\newcommand{\DG}{{\EuScript D}i{\EuScript G}}
\newcommand{\Gp}{{\EuScript Gp}}
\newcommand{\Gpd}{{\EuScript Gpd}}
\newcommand{\TGpd}{{\EuScript DbGpd}}
\newcommand{\TGw}{\TGpd_{t}}
\newcommand{\BGpd}{{\EuScript BiGpd}}
\newcommand{\ud}{\sp{\delta}}
\newcommand{\Gd}{\Gpd\ud}
\newcommand{\NS}{{\EuScript NS}}
\newcommand{\Set}{{\EuScript Set}}
\newcommand{\VC}{\hy{\V}{\Cat}}
\newcommand{\Sb}{S_{\cub}\,}
\newcommand{\CO}{(\C,\OO)}
\newcommand{\COC}{\hy{\CO}{\Cat}}
\newcommand{\OC}{\hy{\OO}{\Cat}}
\newcommand{\GD}{(\Gpd,\D_{0})}
\newcommand{\GDC}{\hy{\GD}{\Cat}}
\newcommand{\GdD}{(\Gd,\D_{0})}
\newcommand{\GdDC}{\hy{\GdD}{\Cat}}
\newcommand{\GE}{(\Gpd,\E)}
\newcommand{\GO}{(\Gpd,\OO)}
\newcommand{\GOC}{\hy{\GO}{\Cat}}
\newcommand{\TraO}{\GOC}
\newcommand{\TTra}{\hy{2}{\mathrm{Track}}}
\newcommand{\TGwO}{(\TGw,\OO)}
\newcommand{\TGwOC}{\hy{\TGwO}{\Cat}}
\newcommand{\TTraO}{\TGwOC}
\newcommand{\SO}{(\Ss,\OO)}
\newcommand{\SOC}{\hy{\SO}{\Cat}}
\newcommand{\PCO}[1]{(\Po{#1}\C,\OO)}
\newcommand{\PSO}[1]{(\PS{#1},\OO)}
\newcommand{\PSOC}[1]{\hy{\PSO{#1}}{\Cat}}
\newcommand{\VO}{(\V,\OO)}
\newcommand{\VOC}{\hy{\VO}{\Cat}}
\newcommand{\Ic}[1]{{\mathcal I}^{#1}}
\newcommand{\EM}[3]{E_{#1}({#2},{#3})}
\newcommand{\ba}{\vec{\mathbf{a}}}
\newcommand{\bDelta}{\mathbf{\Delta}}
\newcommand{\Fp}{{\mathbb F}_{p}}
\newcommand{\bull}{\,\hbox{\circle*{3}}}
\newcommand{\ubull}{\,\,\raisebox{1.2ex}{\hbox{\circle*{3}}}}
\newcommand{\Bd}{B\bull}
\newcommand{\Cu}{C\ubull}
\newcommand{\Cus}{C^{\ast}}
\newcommand{\Ed}{E\bull}
\newcommand{\fd}{f\bull}
\newcommand{\gd}{g\bull}
\newcommand{\Gss}{G_{\ast\ast}}
\newcommand{\hd}{h\bull}
\newcommand{\Md}{M\bull}
\newcommand{\Nd}{N\bull}
\newcommand{\tNd}{\tN\bull}
\newcommand{\tU}{\tilde{U}}
\newcommand{\tUd}{\tU\bull}
\newcommand{\Vd}{V\bull}
\newcommand{\tVd}{\tilde{V}\bull}
\newcommand{\Vdd}{V\bull\bull}
\newcommand{\Wd}{W\bull}
\newcommand{\Wdd}{W\bull\bull}
\newcommand{\tW}{\tilde{W}}
\newcommand{\tWd}{\tW\bull}
\newcommand{\Xd}{X\bull}
\newcommand{\tX}{\tilde{X}}
\newcommand{\tXd}{\tX\bull}
\newcommand{\Yd}{Y_{\bullet}}
\newcommand{\tY}{\tilde{Y}}
\newcommand{\tYd}{\tY\bull}
\newcommand{\Zd}{Z\bull}
\newcommand{\tZ}{\tilde{Z}}
\newcommand{\tZd}{\tZ\bull}
\newcommand{\Xdd}{X\bull\bull}
\newcommand{\ui}[1]{^{({#1})}}
\newcommand{\hPi}{\hat{\Pi}_{1}}
\newcommand{\hpi}{\hat{\pi}_{1}}
\newcommand{\bk}{[\mathbf{k}]}
\newcommand{\bm}{[\mathbf{m}]}
\newcommand{\bn}{[\mathbf{n}]}
\newcommand{\bze}{[\mathbf{0}]}
\begin{document}
\title{Two-track categories}
\author{David Blanc and Simona Paoli}
\address{Department of Mathematics\\ University of Haifa\\ 31905 Haifa\\ Israel}
\email{blanc@math.haifa.ac.il}
\email{paoli@math.haifa.ac.il}
\date{July 6, 2009. Corrected: December 17, 2009}
\subjclass{Primary: 18G55; \ secondary: 18B40,18G30,55S45,55N99}
\keywords{Track categories, double groupoids, $2$-types, simplicially enriched
  categories, Baues-Wirsching cohomology}

\begin{abstract}
We describe a $2$-dimensional analogue of track categories, called
two-track categories, and show that it can be used to model categories
enriched in $2$-type mapping spaces. We also define a Baues-Wirsching
type cohomology theory for track categories, and explain how it can be
used to classify two-track extensions of a track category $\D$ by a
module over $\D$.
\end{abstract}

\maketitle

\setcounter{section}{0}

%
%
\section*{Introduction}
\label{cintr}

In \cite{DKanF}, Dwyer and Kan showed that any model category $\E$ can be
provided with simplicial function complexes, in such a way that the resulting
simplicially enriched category \w{\Xd} encodes the homotopy theory of $\E$.
As in the case of individual topological spaces, it is often
useful to approximate \w{\Xd} by its Postnikov sections \w{\Po{n}\Xd}
for \w[,]{n\geq 0} obtained by applying the $n$-th Postnikov functor
to each mapping space of \w[.]{\Xd} Hence \w{\Po{n}\Xd} is a category
``enriched in $n$-types'' \wh that is, in simplicial sets whose
homotopy groups vanish in dimensions \w[.]{>n} Consecutive Postnikov
sections are related as usual via their $k$-invariants, which take
value in certain \ww{\SO}-cohomology groups (see \cite{DKanS}).

It is convenient to have algebraic models for \w[,]{\Po{n}\Xd} in
particular if they allow for an explicit description of the (systems
of) homotopy groups (see \S \ref{egnatmod}), Postnikov towers, and
$k$-invariants for \w[.]{\Xd} 

For instance, in the case \w{n=1} the fundamental groupoid \w{\hpi Y}
of an individual simplicial set (or topological space) $Y$ provides an
algebraic model the \emph{$1$-type} of $Y$ \wh that is, the homotopy
type of \w[.]{\Po{1}Y} If we use Kan's version of the Postnikov
section, so that both functors \w{\hpi} and \w{\Po{1}} strictly
commute with products, they extend to simplicially enriched
categories.  Moreover, the nerve functor from groupoids to simplicial
sets lands in $1$-types, and is also monoidal, so it extends to the
enriched setting, providing an inverse up to homotopy to \w[.]{\hpi}
As a result, categories enriched in groupoids, called
\emph{track categories}, provide an algebraic model for
\w[,]{\Po{1}\Xd} up to homotopy. 

Ideally, such an algebraic description would allow one to better
understand the homotopy theory of the original model category $\E$ as
a whole \wh e.g., by providing an explicit calculus of higher homotopy
(or cohomology) operations.
For example, let \w{\E_{Y}} consist of a single space $Y$ together
with all maps from $Y$ into finite products of mod-$p$ Eilenberg-Mac~Lane
spaces, and maps between them. Its homotopy category \w{\ho\E_{Y}} thus
encodes the mod $p$ cohomology of $Y$ as an algebra over the Steenrod algebra.
If \w{\Xd} is the corresponding simplicially enriched category, its
track category \w{\hpi\Xd} records in addition all \emph{secondary}
mod-$p$ cohomology operations on $Y$. Moreover, in this case
\w{\hpi\Xd} can be described as a ``linear track extension'' of
\w{\ho\E_{Y}}  by a certain natural system $\K$ on \w{\ho\C} (see
\S \ref{ccstc} below), and this extension is classified by a class in
the third Baues-Wirsching cohomology group
\w[.]{H^{3}_{\BW}(\ho\C;\E_{Y})} This class may be identified in turn
with the $0$-th $k$-invariant for \w{\Xd} (see \cite[Theorem 6.5]{BBlaC}).
Baues and his collaborators have shown how this description can be used in
practice to elucidate the secondary structure of \w{H^{\ast}(Y;\Fp)}
and make computations in the Adams spectral sequence (see
\cite{BauAS,BJiblSD}).

\begin{mysubsection}{The $2$-dimensional case}\label{stdc}
In this paper we describe an approach to the $2$-dimensional case, which
both provides a setting for studying the ``tertiary Steenrod
algebra'' in the spirit of Baues's work, and indicates how one might
be able to proceed to higher dimensions.

This approach involves two main steps:
\begin{enumerate}
\renewcommand{\labelenumi}{(\alph{enumi})\ }
\item We first construct a functorial model of $2$-types, which we call
\emph{two-typical} double groupoids.

Many algebraic structures have been shown to model $2$-types of
topological spaces, beginning (in the connected case) with the crossed
modules of \cite{MWhitT} and the double groupoids with connections of
\cite{BSpenD}.  More general models include the homotopy double
groupoids of \cite{BHKPortH}, the homotopy bigroupoids of
\cite{HKKieHB}, the strict $2$-groupoids of \cite{MSvenA}, and the
weak $2$-groupoids of \cite{TamsN}.

The advantage of the two-typical double groupoids is the explicit
description of the model associated to a Kan complex, its homotopy
groups, and its Postnikov tower. 
\item More directly relevant to our present purpose is the fact that
  the resulting functor \w{\dP:\Ss\to\TGw} preserves products. Thus by
  applying $\dP$ to each mapping space of a simplicially enriched
  category \w[,]{\Xd} we obtain a convenient model for ``categories
  enriched in $2$-types'' \wh more precisely, for
  \emph{\ww{\PSO{2}}-categories}, which are categories \w{\Yd}
  enriched in simplicial spaces for which
  \w{\map_{\Yd}(a,b)\simeq\Po{2}\map_{\Yd}(a,b)} for each 
\w[.]{a,b\in\OO=\Obj\Yd}
\end{enumerate}

In the second part, we define a Baues-Wirsching type
cohomology \w{H^{\ast}_{\BW}(\D;\M)} for an (ordinary) track category $\D$,
with coefficients in an appropriate notion of a natural
system $\M$ on $\D$. We then show how one can associate an ``underlying
homotopy track category'' \w{\D=\ho\G} to any two-track category $\G$,
as well as a natural system \w{\M=\Pi_{2}\G} on $\D$, and explain how $\G$,
thought of as an extension of $\D$ by $\M$, is classified by a
naturally defined class \w[.]{\chi_{\G}\in H^{4}_{\BW}(\D;\M)}
Finally, we show that this cohomology theory is naturally isomorphic
(with a shift in dimension) to the \ww{\SO}-cohomology of Dwyer and
Kan for the corresponding simplicially enriched category \w[,]{\Xd=\N\D}
and that \w{\chi_{\G}} corresponds to the first $k$-invariant of
\w[.]{\Xd}

As with most new constructions of a known cohomology theory, one
should view the cohomology theory \w{H^{\ast}_{\BW}}
mainly as an alternative approach to the \emph{computation} of the
\ww{\SO}-cohomology of a \ww{\PSO{2}}-category. Since it is very
difficult to use the original definition of Dwyer and Kan to carry out
explicit calculations, it is to be hoped that our definition will make
it more accessible \wh in particular, to Baues-Jibladze type
computations of the Adams spectral sequence (see \cite{BJiblE,BJiblSD} and
\cite{BBlaS}).

See \cite{BauHOT} for a different approach to the $2$ (and higher
$n$)-dimensional cases.
\end{mysubsection}

\begin{mysubsection}{Notation and conventions}\label{snot}
For any category $\C$, \w{s\C:=\C^{\bDelta\op}} is the category of simplicial
objects over $\C$. We abbreviate \w{s\Set} to \w[.]{\Ss} If $\C$ is
concrete, the \emph{$n$-skeleton} \w{\sk{n}\Xd\in s\C} of any \w{\Xd\in s\C}
is generated under the degeneracy maps by \w[.]{X_{0},\dotsc, X_{n}}
The \emph{$n$-coskeleton} functor \w{\csk{n}:s\C\to s\C} is left
adjoint to \w[.]{\sk{n}}

We denote by \w{\Scf} the full subcategory of $\Ss$ consisting of Kan
complexes \wh i.e., fibrant (and cofibrant) simplicial sets.
For \w[,]{X\in\Scf} we can use \w{\csk{n+1}X} as a model for the $n$-th
Postnikov section \w[.]{\Po{n}X}
For each \w[,]{n\geq 0} let \w{\PS{n}} denote the full
subcategory of $\Ss$ consisting of simplicial sets $X$ for which the
natural map \w{X\to \Po{n}X} is a weak equivalence (that is,
\w{\pi_{i}(X,x)=0} for all \w{x\in X} and \w[).]{i>n} An
\emph{$n$-type} is (the homotopy equivalence class of) an object in
\w[.]{\PS{n}}

For a bisimplicial set \w[,]{\Wdd\in s\Ss=ss\Set} we think of the
first index as the \emph{horizontal} direction and the second index as
the \emph{vertical} direction, and for each (fixed) \w{n\geq 0} we
write \w{W_{n}^{h}\in\Ss} for the simplicial set with
\w{(W_{n}^{h})_{i}:=W_{n,i}} for \w[.]{i\geq 0}
Similarly, if \w{f:\Wdd\to\Vdd} is a map in \w[,]{s\Ss} we write \w{f_{n}^{h}}
for its restriction to \w[.]{W_{n}^{h}\in\Ss}

For any \w[,]{n\geq 0} a map \w{f:\Xd\to\Yd} in $\Ss$ is called an
\emph{$n$-equivalence} if it induces isomorphisms
\w{f_{\ast}:\pi_{0}\Xd\to\pi_{0}\Yd} (of sets), and
\w{f_{\#}:\pi_{i}(\Xd,x)\to\pi_{i}(\Yd,f(x))} for every \w{1\leq i\leq n} and
\w[.]{x\in X_{0}}

Let \w{\Cat} denote the category of small categories, and \w{\Gpd}
the full subcategory of groupoids.
If \w{\lra{\V,\otimes}} is a monoidal category, we denote by \w{\VC}
the collection of all (not necessarily small) categories enriched over $\V$
(see \cite[\S 6.2]{BorcH2}). For any set $\OO$, denote by \w{\OC} the
category of all small categories $\D$ having \w[,]{\Obj\D=\OO} with
functors which are the identity on objects as morphisms. A
\ww{\VO}-\emph{category} is a category \w{\D\in\OC} enriched over
$\V$, with mapping objects \w[.]{\mape{v}{\D}(-,-)\in\V}
The category of all \ww{\VO}-categories will be denoted by \w[.]{\VOC}

The main examples of \w{\lra{\V,\otimes}} to keep in mind are
\w[,]{\lra{\Set,\times}} \w[,]{\lra{\Gp,\times}} \w[,]{\lra{\Gpd,\times}}
\w[,]{\lra{\Ss,\times}} and \w[,]{\lra{\C,\otimes}} where $\C$ is the
category of cubical sets (see \S \ref{dcube} below).

We obtain further variants by applying any (strictly) monoidal functor
\w{P:\lra{\V,\otimes}\to\lra{\V',\otimes'}} to a \ww{\VO}-category
$\C$, where by a slight abuse of notation we use the same name for the
prolonged functor. For example, given an \ww{\SO}-category \w[,]{\Xd} for each
\w{n\geq 1} we have a \ww{\PSO{n}}-category \w[,]{\Yd:=\Po{n}\Xd} in which each
mapping space \w{\Yd(a,b)} is the $n$-th Postnikov section \w[.]{\Po{n}\Xd(a,b)}
More precisely, we use a functorial product-preserving fibrant
replacement for each mapping space of \w{\Xd} to obtain a fibrant
\ww{\SO}-category \w[,]{\tXd} and then apply a product-preserving
version \w{\Po{n}} of the $n$-th Postnikov section functor (cf.\
\cite[VI, \S 2]{GJarS}) to each mapping space of \w{\tXd} to obtain \w[.]{\Yd}

Note that because the Cartesian product on $\Ss$ is defined levelwise,
we can think of an \ww{\SO}-category as a simplicial object in
\w[,]{\OC} thus identifying \w{\SOC} with \w[.]{s\OC}

Finally, for any category $\C$, the category of abelian group objects
in $\C$ is denoted by \w[.]{\C_{\ab}}
\end{mysubsection}

\begin{mysubsection}{Organization}\label{sorg}
Section \ref{ctrack} provides a review of groupoids and track
categories. Section \ref{cdgpd} discusses double groupoids, and in
particular those which are \emph{two-typical} (Definition
\ref{dtwotypical}). In Section \ref{cdgttype} we show that two-typical
double groupoids model $2$-types (Theorem \ref{tmodeltt}), and in
Section \ref{cttrack} we use this notion to define two-track
categories, and show their equivalence with \ww{\PSO{2}}-categories,
up to weak equivalence (Corollary \ref{cmodeltt}). An alternative
model using Gray categories is given by Proposition \ref{pgray}.

Coefficient systems on track categories are defined in Section
\ref{ccstc}, and are used in Section \ref{cctc} to define the
cohomology of track categories (extending the Baues-Wirsching
cohomology of small categories), and show its equivalence
with the \ww{\SO}-cohomology of Dwyer and Kan (Theorem \ref{tbwso}).
Finally, in Section \ref{cttetc} we discuss two-track extensions of
track categories and show how they are classified by a suitable
Baues-Wirsching type cohomology class, which may be identified with
the first $k$-invariant of the corresponding \ww{\PSO{2}}-category
(Theorem \ref{tbwclass}).
\end{mysubsection}

\begin{ack}
We would like to thank Hans Baues and the referee for many useful
comments and corrections. This research was supported by BSF grant 2006039.
\end{ack}

%
%
\section{Groupoids and track categories}
\label{ctrack}

We first recall some standard definitions and facts about groupoids
and track categories.

\begin{defn}\label{dgpoid}
Recall that a \emph{groupoid} is a small category $G$ in which all morphisms
are isomorphisms. As for any category, it can be described by a diagram of
sets:
\mydiagram[\label{eqgpoid}]{
G_{1}\times_{G_{0}}G_{1}\ar@<2ex>[rr]^{d_{0}}\ar[rr]^{c}\ar@<-1.5ex>[rr]_{d_{2}}&&
G_{1}\ar@<0.7ex>[rr]^{s} \ar@<-.7ex>[rr]_{t} \ar@/^2pc/[ll]^{s_{1}}
\ar@/_2pc/[ll]_{s_{0}} &&  G_{0} \ar@/_1.3pc/[ll]_{i}
}
\noindent where \w{G_{0}} is the set of objects of $G$ and \w{G_{1}} the
set of arrows. Here $s$ and $t$ are the source and target functions, $i$
associates to an object its identity map, \w{d_{0}} and \w{d_{2}} are
the respective projections, with ``inverses'' \w{s_{0}} and \w[,]{s_{1}}
and $c$ is the composition, satisfying the appropriate identities.

We can think of \wref{eqgpoid} as the $2$-skeleton of a simplicial
set (with \w[,]{G_{2}:=G_{1}\times_{G_{0}}G_{1}} \w[,]{d_{1}=c:G_{2}\to G_{1}}
and so on). The \emph{nerve} functor \w{N:\Gpd\to\Ss} (cf.\
\cite{SegC}) assigns to $G$ the corresponding $2$-coskeletal
simplicial set \w[,]{NG} so
\begin{myeq}\label{eqcoskel}
(NG)_{n}~:=~
\underbrace{G_{1}\times_{G_{0}}G_{1}\dotsc G_{1}\times_{G_{0}}G_{1}}_{n}
\end{myeq}
\noindent for all \w[,]{n\geq 2} with face maps determined by the
associativity of the composition $c$.

Given a groupoid $G$ as above, taking the coequalizer of $s$ and $t$
in \wref{eqgpoid} yields the set \w{\pi_{0}G} (which may be identified
with the usual set \w{\pi_{0}NG} of path components of the nerve \wh
that is, the coequalizer of \w[).]{d_{0},d_{1}:NG_{0}\to NG_{1}}

Note that the usual cartesian product yields a monoidal structure on
the category \w{\Gpd} of groupoids (with the trivial groupoid on one
object as the unit).
\end{defn}

\begin{defn}\label{dtrack}
A \emph{track category} is a (small) category enriched in groupoids,
and a \ww{\GO}-category is a track category with object set $\OO$.
We may identify the category \w{\GOC} of all such track categories
with \w[,]{\Gpd(\OC)} the category of internal groupoids in \w[.]{\OC}
\end{defn}

We use the notation \w{\D=(\D_{1}\toto\D_{0})} to indicate that the
track category $\D$ has \w{\D_{0}} as its category of \emph{$0$-cells}
(objects \w[)]{a,b\in\OO} and \emph{$1$-cells} (maps \w[),]{f:a\to b} and
\w{\D_{1}} as its category of $0$-cells and \emph{$2$-cells} (or
\emph{tracks} \w[).]{\xi:f\Ra g} For fixed \w[,]{a,b\in\OO:=\Obj\D}
we denote by \w{\xi\cv\zeta} the \emph{vertical} (internal) composition of
tracks \w{f\xRa{\xi}g\xRa{\zeta}h} in the groupoid \w[.]{\D(a,b)}
The \emph{homotopy category} of $\D$ in \w[,]{\OC} denoted by
\w{\Pi_{0}\D} or \w[,]{\ho\D} is obtained by applying \w{\pi_{0}} to
each groupoid \w[.]{\D(a,b)} This has equivalence classes of $1$-cells
(with respect to the $2$-cells, which are all invertible) as morphisms.

For a groupoid $G$, we let \w{G\ud} denote the \emph{semi-discrete}
groupoid with the same objects as $G$, with \w{G\ud(a,a)=G(a,a)} for each
\w[,]{a\in\Obj G} and \w{G\ud(a,b)=\emptyset} for \w{a\neq b} (i.e., a
disjoint union of groups). This notation extends to track categories.
Given a category \w[,]{\E\in\OC} a \ww{\GE}-category is a
track category $\D$ with \w[.]{\D_{0}=\E}

The nerve functor \w{N:\Gpd\to\Ss} has a left adjoint, which coincides
with the fundamental groupoid functor \w{\hpi:\Ss\to\Gpd} (cf.\
\cite[Chapter 2]{HigC}) when applied to Kan complexes.
Moreover, \w{\hpi} and $N$ induce a one-to-one correspondence between
$1$-types (i.e., isomorphism classes in \w[)]{\ho\Ss_{\leq 1}} and
equivalence classes of groupoids.

Since $N$ commutes with products, it extends to a functor \w{\N:\GOC\to\SOC}
defined by taking the nerve of each groupoid \w{\D(a,b)} for \w[.]{a,b\in\OO}
$\N$ has a left adjoint \w[,]{\cP:\SOC\to\GOC} which is defined for
fibrant \ww{\SO}-categories (see \S \ref{ssoc} below) by applying
\w{\hpi} to each mapping space \wh again, because \w{\hpi} commutes
with products. The fibrancy is needed here because when \w{X\in\Ss} is
not fibrant, the usual construction of \w{\hpi X} involves first
replacing it by a Kan complex (cf.\ \cite[I, \S 8]{GJarS})
Moreover, since the nerve of a groupoid is $2$-coskeletal, $\N$ in
fact lands in the category \w{\PSOC{1}} of ``categories enriched in
$1$-types'' (see \S \ref{snot}), so we have functors:
\mydiagram[\label{eqnervefund}]{
\PSOC{1} \ar@<1ex>[rr]^{\cP} & &\GOC \ar@<1ex>[ll]^{\N} }
\noindent Under the identifications \w{\GOC\cong\Gpd(\OC)} and
\w[,]{\SOC\cong s\OC} the adjoint pair \wref{eqnervefund}
corresponds to the adjunction \w{\Gpd(\OC)\rightleftharpoons s\OC}
between the nerve functor on internal groupoids and its left adjoint.

We say that a morphism $f$ in \w{\TraO} is a \emph{weak equivalence} if
\w{\N f} is a weak equivalence in the standard model structure on
\w{\SOC} (see \S \ref{ssoc} below). If \w{\TraO\bsim} denotes the
localization of \w{\TraO} with respect to weak equivalences, and
similarly for \w{\SOC} or \w[,]{\PSOC{n}}  then  $\cP$ and $\N$
induce equivalences between \w{\PSOC{1}\bsim} and \w[.]{\TraO\bsim}

%
%
\section{Two-typical double groupoids}
\label{cdgpd}

As noted in the Introduction, there are several algebraic structures
which model $2$-types. We now describe a certain kind of double
groupoid which can be used as such models. These are equipped with a
pair of adjoint functors, which enables us to pass back and forth
between \ww{\Po{2}}-simplicial sets and such double groupoids.

\begin{defn}\label{ddgpd}
A \emph{double groupoid} (cf.\ \cite{EhreCD}) is a groupoid internal
to \w[:]{\Gpd} in other words, a diagram of the form \wref[,]{eqgpoid}
with \w{G_{0}} and \w{G_{1}} in \w[,]{\Gpd} rather than \w[.]{\Set}
A double groupoid \w{\Gss} may thus be described explicitly by a
diagram (of sets) of the form:
\mydiagram[\label{eqdgpd}]{
& G_{11}\times_{G_{10}}G_{11}\ar[d]^{c^{1\ast}} \ar@<0.5ex>[r]\ar@<-0.5ex>[r] &
G_{01}\times_{G_{00}}G_{01}\ar[d]^{c^{0\ast}} \\
G_{11}\times_{G_{01}}G_{11}\ar[r]^<<<<<<<{c^{1 \ast}}
\ar@<0.5ex>[d]\ar@<-0.5ex>[d] &
G_{11} \ar@<0.5ex>[d]^{d_{0}^{1\ast}} \ar@<-0.5ex>[d]_{d_{1}^{1\ast}}
\ar@<0.5ex>[r]^{d_{0}^{\ast 1}} \ar@<-0.5ex>[r]_{d_{1}^{\ast 1}}  &
G_{01} \ar@<0.5ex>[d]^{d_{1}^{0\ast}} \ar@<-0.5ex>[d]_{d_{1}^{0\ast}}
\ar@/_1.5pc/[l] \\
G_{10}\times_{G_{00}}G_{10} \ar[r]^<<<<<<<{c^{\ast 0}} &
G_{10} \ar@<0.5ex>[r]^{d_{0}^{\ast 0}} \ar@<-0.5ex>[r]_{d_{1}^{\ast 0}}
\ar@/^2pc/[u] &
G_{00} \ar@/^1.5pc/[l] \ar@/_2pc/[u]
}
\noindent satisfying certain axioms. Here each of the four pairs of
maps marked \w{d_{0}} and \w{d_{1}} are source and target maps for one
of the four groupoids, and the curved maps are the identities.
We think of \w{G_{\ast 0}} and \w{G_{\ast 1}} as
the \emph{horizontal} groupoids of \w[,]{\Gss} and \w{G_{0\ast}} and
\w{G_{1\ast}} as the \emph{vertical} ones. The category of double
groupoids is denoted by \w[.]{\TGpd}
\end{defn}

\begin{defn}\label{ddgnerve}
By applying the nerve functor \w{N:\Gpd\to\Ss} horizontally to a
double groupoid \w[,]{\Gss\in\TGpd} we obtain a simplicial groupoid
\w[.]{N^{h}\Gss\in s\Gpd} If we then apply $N$ again vertically, we
obtain a bisimplicial set \w[,]{N^{v}N^{h}\Gss\in s\Ss} called the
\emph{double nerve} of \w[.]{\Gss} Taking its diagonal yields the
\emph{diagonal nerve} of \w[,]{\Gss} so that the functor
\w{\dN:\TGpd\to\Ss} is the composite of:
$$
\TGpd~\xra{N^{h}}~s\Gpd~\xra{N^{v}}~s\Ss~\xra{\diag}~\Ss~.
$$
\end{defn}

From general categorical considerations it is clear that
\w{\dN:\TGpd\to\Ss} must have a left adjoint; however, it
is hard to describe this adjoint explicitly in a useful way. We now
define a functor \w[,]{\dP:\Ss\to\TGpd} equivalent up to homotopy to
this adjoint, which has a particularly simple form when \w{X\in\Ss} is
fibrant, and which takes values in a convenient subcategory of
\w[.]{\TGpd} We shall use this construction of \w{\dP X} as our
canonical double groupoid model for $X$. In fact, \w{\dP} will be the
composite of two functors, so we start with the following:

\begin{defn}\label{dos}
Let the functor \w{\os:\Ss\to s\Ss} be induced by the ordinal sum
\w{\orr:\Delta^{2}\to\Delta} (where the category $\Delta$ of finite
ordinals is the indexing category for simplicial objects). Given \w[,]{X\in\Ss}
the bisimplicial set \w{\os X} can be described as ``total \ww{\Dec}''
of \cite{IlluC2} (see also \cite{DuskS}), as follows:

Let \w{\Aug\Ss} denote the category of augmented simplicial sets. There is a
functor \w{\Dec:\Ss\to\Aug\Ss} which forgets the last face
operator. This has a right adjoint \w[,]{+:Aug\Ss\to\Ss} which forgets
the augmentation. The adjoint pair \w{(\Dec,+)} gives rise to a
comonad, and the resulting comonad resolution of $X$ is \w[.]{\os X}

Explicitly, this has the form:
$$
\xymatrix@R=25pt{
    \cdots \ssr & X_5 \ar@<1ex>[r] \ar[r] \ar@<-1ex>[r] \ar@<1ex>[d] \ar[d]
    \ar@<-1ex>[d]& X_4 \ar@<1ex>[r] \ar[r]  \ar@<1ex>[d] \ar[d] \ar@<-1ex>[d] &
    X_3 \ar@<1ex>[l] \ar@<1ex>[d] \ar[d] \ar@<-1ex>[d]\\
    \cdots \ssr  & X_4 \ar@<1ex>[r] \ar[r] \ar@<-1ex>[r] \ar@<-1ex>[d] \ar[d]  &
    X_3 \ar@<1ex>[r] \ar[r]   \ar[d] \ar@<-1ex>[d] &
    X_{2} \ar@<1ex>[l] \ar@<-1ex>[d] \ar[d] \\
    \cdots \ssr & X_3 \ar@<1ex>[r] \ar[r]\ar@<-1ex>[r] \ar@<-1ex>[u] &
     X_{2} \ar@<1ex>[r] \ar[r] \ar@<-1ex>[u]  &
    X_{1} \ar@<1ex>[l]\ar@<-1ex>[u]
}
$$
\end{defn}

The following fact is straightforward:

\begin{lemma}\label{lfibdec}
Let $X$ be a fibrant simplicial set. Then \w{\Dec X} is also fibrant and the
map \w{\partial:\Dec X\to X} is a fibration.
\end{lemma}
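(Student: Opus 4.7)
The plan is to reduce both assertions to the Kan condition for $X$ via the adjunction identifying $\Dec$ with the functor built from the cosimplicial join. Concretely, the isomorphism $(\Dec X)_n \cong X_{n+1}$ arises from $\Delta[n] \star \Delta[0] \cong \Delta[n+1]$, and more generally maps $K \to \Dec X$ of simplicial sets correspond naturally to maps $K \star \Delta[0] \to X$; under this correspondence the projection $\partial : \Dec X \to X$ is restriction along the canonical inclusion $K \hookrightarrow K \star \Delta[0]$.

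The one combinatorial input needed is the pushout identification
\[
\Lambda^k[n+1] \;=\; \bigl(\Lambda^k[n] \star \Delta[0]\bigr) \cup_{\Lambda^k[n]} \Delta[n]
\]
inside $\Delta[n+1]$, valid for every $0 \le k \le n$. Here the $\Delta[n]$ summand is the $(n{+}1)$-st face of $\Delta[n+1]$ (the face opposite the cone vertex), while $\Lambda^k[n] \star \Delta[0]$ collects the remaining faces $d_i\Delta[n+1]$ with $i \le n$ and $i \ne k$; the two subcomplexes meet in $\Lambda^k[n]$, so the pushout is well-defined.

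With this in hand, a lifting problem for $\partial$ specified by a horn $f : \Lambda^k[n] \to \Dec X$ with diagonal $g : \Delta[n] \to X$ transports under the join adjunction to a single map $\Lambda^k[n+1] \to X$; since $X$ is fibrant and $0 \le k \le n+1$, this extends to $\Delta[n+1] \to X$, which corresponds to the required lift $\Delta[n] \to \Dec X$ restricting to $f$ upstairs and projecting to $g$ downstairs. This proves the fibration statement. Fibrancy of $\Dec X$ then follows by filling any horn $\Lambda^k[n] \to \Dec X$ in two steps: first fill its image under $\partial$ in $X$ using the Kan property of $X$, then invoke the fibration property of $\partial$ just established. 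The only step that needs actual care is the combinatorial pushout identification above; the rest is formal manipulation of the join adjunction, so I do not anticipate any substantive obstacle.
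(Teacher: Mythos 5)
Your argument is correct and complete. The paper offers no proof of this lemma (it is dismissed as ``straightforward''), so there is nothing to compare against; what you have written is the standard décalage-as-slice argument, and it does fill a genuine gap in the exposition. With the paper's convention that $\Dec X$ forgets the \emph{last} face operator, $(\Dec X)_n=X_{n+1}$ with faces $d_0,\dotsc,d_n$, the map $\partial$ is $d_{n+1}$ levelwise, and $\Dec X\cong\coprod_{x\in X_0}X_{/x}$, so your identification of $\partial$ with restriction along $K\hookrightarrow K\star\Delta[0]$ is the right one. Your pushout decomposition $\Lambda^k[n+1]=(\Lambda^k[n]\star\Delta[0])\cup_{\Lambda^k[n]}\Delta[n]$ is valid precisely for $0\le k\le n$, which is exactly the range you need, and transporting the lifting problem across it reduces everything to a single horn filler in $X$.

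One small imprecision worth flagging: the bijection $\Hom(K,\Dec X)\cong\Hom(K\star\Delta[0],X)$ is \emph{not} natural in arbitrary $K$ --- it fails for $K=\emptyset$ and for disconnected $K$, since a map $K\star\Delta[0]\to X$ forces all components of $K$ to share one cone vertex, whereas a map $K\to\Dec X$ does not. The clean statement is that $(-)\star\Delta[0]$ is left adjoint to the slice functor $(X,x)\mapsto X_{/x}$ on the under-category $\Delta[0]/\Ss$, or simply that the bijection holds for connected nonempty $K$. Since every $K$ you actually test against ($\Lambda^k[n]$ and $\Delta[n]$ with $n\ge 1$) is connected and nonempty, this does not affect the proof, but the blanket naturality claim as stated should be restricted. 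The remaining steps (deducing fibrancy of $\Dec X$ by first filling in $X$ and then lifting along $\partial$) are standard and correct.
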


\begin{defn}\label{dkanp}
The functor \w{\dP:\Ss\to\TGpd} is defined to be the composite
\w[,]{\dP:=\hP\circ\os} where $\hP$ is the left adjoint to the double
nerve functor \w[.]{N^{v}N^{h}:\TGpd\to ss\Set}
\end{defn}

\begin{remark}\label{rintcase}
The internal case (in groups) recovers the fundamental
\ww{\Cat^{2}}-group functor of a simplicial group (see \cite{BCDusC}).
\end{remark}

In general, it is hard to describe the left adjoint
\w{\hP:ss\Set\to\TGpd} in a useful way. In order to get a simple
description in certain cases, we need the following:

\begin{defn}\label{dcsktfib}
We say that a simplicial set $X$ is \ww{\csk{2}}-\emph{fibrant}
if \w{\csk{2} X} is fibrant. This is equivalent to saying that
\w{\Lambda^k[n]\to X} has a filler \w{\Delta[n]\to X} for each
\w[.]{0< n\leq 2} Similarly, we say that \w{f:X\to Y} is a
\ww{\csk{2}}-\emph{fibration} if \w{\csk{2} f} is a fibration.
\end{defn}

\begin{remark}\label{rcsktfib}
Recall that when \w{X\in\Ss} is fibrant, its fundamental groupoid
\w{\hpi X} has a particularly simple description: its set of objects
is \w[,]{X_{0}} and for \w[,]{x,x'\in X_{0}} the morphism set
\w{(\hpi X)(x,x')} is
\w[,]{\{\tau\in X_{1}~:\ d_{0}\tau=x,d_{1}\tau=x'\}\bsim} where $\sim$
is determined by the $2$-simplices of $X$. We write \w{(\hpi X)_{1}}
for \w[.]{X_{1}\bsim}

Note that \w{\hpi:\Scf\to\Gpd} factors through \w[,]{\csk{2}} so
this description is valid for any \ww{\csk{2}}-fibrant $X$.
Note also that if \w{\Xd} is a simplicial groupoid,
\w{d_{1}:X_{n1}\to X_{n0}} is the target map of the groupoid \w{X_{n}}
for each \w[.]{n\geq 0}
\end {remark}

%
%
\begin{prop}\label{pone}
Let \w{\Xd\in s\Gpd} be a (horizontal) simplicial groupoid, for which
the simplicial sets \w{X\bull_{0}} and \w{X\bull_{1}} are
\ww{\csk{2}}-fibrant, and
the morphism \w{d_{1}^{v}:X\bull_{1}\to X\bull_{0}} is a
\ww{\csk{2}}-fibration.
Then the left adjoint \w{\Ph:s\Gpd\to\TGpd} to the nerve
\w[,]{N^{h}:\TGpd\to s\Gpd} applied to \w[,]{\Xd} is \w[.]{\hpi^{h}\Xd}
\end{prop}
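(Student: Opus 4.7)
The plan is to verify two things: first, that $\hpi^h\Xd$ carries a natural structure of double groupoid under the stated hypotheses, and second, that it satisfies the universal property exhibiting it as $\Ph\Xd$.

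To establish the first point, I would invoke Remark \ref{rcsktfib} to get the elementary presentation of $\hpi$ applied to the $\csk{2}$-fibrant simplicial sets $X\bull_0$ and $X\bull_1$ extracted horizontally from $\Xd$, namely $(\hpi X\bull_i)_0=X_{0,i}$ and $(\hpi X\bull_i)_1=X_{1,i}\bsim$ for $i=0,1$. The internal groupoid structure on each $X_n$ consists of simplicial maps between $X\bull_0$ and $X\bull_1$ (vertical source, target, identity, composition, and inverse); applying $\hpi^h$ levelwise to these yields candidate vertical structure maps on $\hpi^h\Xd$. One then checks that they descend to the $\sim$-quotients and satisfy the double groupoid axioms, reassembling the diagram \eqref{eqdgpd} with $G_{\ast 0}=\hpi X\bull_0$ and $G_{\ast 1}=\hpi X\bull_1$.

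For the second point, given an arbitrary double groupoid $\Gss\in\TGpd$, I would establish the bijection $\TGpd(\hpi^h\Xd,\Gss)\cong s\Gpd(\Xd,N^h\Gss)$ by unpacking a morphism $f:\Xd\to N^h\Gss$ in $s\Gpd$ into a pair of simplicial maps $f\bull_i:X\bull_i\to NG_{\ast i}$ ($i=0,1$), compatible with the vertical structure on both sides. Because each $NG_{\ast i}$ is $2$-coskeletal (being the nerve of a groupoid), the ordinary $\hpi\dashv N$ adjunction transposes each $f\bull_i$ to a unique functor $\bar f\bull_i:\hpi X\bull_i\to G_{\ast i}$. Naturality of this adjunction, together with the simplicial character of the vertical structure maps of $\Xd$ and $\Gss$, ensures that vertical compatibility of $f$ transfers to vertical compatibility of the transpose, assembling the $\bar f\bull_i$ into a morphism of double groupoids $\hpi^h\Xd\to\Gss$ corresponding to $f$. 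Injectivity and surjectivity of this correspondence are then formal.

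The principal obstacle is the descent of the vertical structure to $\sim$-equivalence classes in the first point. In particular, the vertical composition functor, defined on representatives via the internal composition in each groupoid $X_n$, must be shown to be independent of the choice of representatives. This is where the $\csk{2}$-fibration hypothesis on $d_1^v$ is essential: it provides the liftings of $2$-simplices from $X\bull_0$ into $X\bull_1$ required to relate the equivalence relation $\sim$ on $X\bull_1$ to the one on $X\bull_0$ via the vertical face maps, and thereby to confirm that replacing a representative by an equivalent one leaves the composite unchanged modulo $\sim$. Once the descent is established, the double groupoid axioms and the adjunction bijection follow formally, as the structure decouples into horizontal components (handled by $\hpi\dashv N$ levelwise) and vertical components (handled by the internal structure of $\Xd$).
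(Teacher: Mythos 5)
Your proposal follows essentially the same route as the paper's proof in Appendix A: in both, the crux is that the \ww{\csk{2}}-fibration hypothesis on \w{d_{1}^{v}} supplies fillers lifting the $2$-simplices of \w{X\bull_{0}} that witness \w{[d_{1}^{v}\alpha]=[d_{0}^{v}\beta]} up to \w[,]{X\bull_{1}} replacing $\alpha$ by an equivalent, genuinely composable representative \w{\hat{\alpha}} and thereby making the vertical composition well defined on \ww{\hpi}-classes, after which the adjunction follows by levelwise transposition through \w{\hpi\dashv N} using $2$-coskeletality of nerves. The one point to stress when writing this up is that the lifting is needed already to \emph{define} the composite \wh composability of classes in the quotient does not give composability of the chosen representatives \wh and not merely to verify independence of representatives, but the mechanism you describe is exactly the one the paper uses.
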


%
%
\begin{prop}\label{ptwo}
Let \w{\Xdd\in s\Ss} be such that \w{X\bull_{i}} and \w{X_{i}\bull} are
\ww{\csk{2}}-fibrant for each \w[,]{i\geq 0} and
\w{d_{0}^{h}:X_{1}\bull\to X_{0}\bull} and
\w{d_{0}^{v}:X\bull_{1}\to X\bull_{0}} are \ww{\csk{2}}-fibrations. Then
\begin{enumerate}
\renewcommand{\labelenumi}{(\roman{enumi})\ }
\item \w{(N^{v}\hpi^{v}\Xdd)_{i}\bull} is fibrant for all \w[.]{i\geq 0}
\item \w{(N^{v}\hpi^{v}\Xdd)\bull_{1}} is \ww{\csk{2}}-fibrant.
\item \w{\ovl{d}^{v}_{0}:(N^{v}\hpi^{v}\Xdd)\bull_{1}\to
      (N^{h}\hpi^{v}\Xdd)\bull_{0}} is a \ww{\csk{2}}-fibration.
\item \w{\ovl{d}^{h}_{0}:(N^{v}\hpi^{v}\Xdd)_{1}\bull\to
        (N^{v}\hpi^{v}\Xdd)_{0}\bull} is a fibration.
\end{enumerate}
\end{prop}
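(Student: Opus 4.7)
The plan is to exploit the fact that $\hpi^v \Xdd$ is a well-defined simplicial groupoid: each column $X_{m,\bullet}$ is $\csk{2}$-fibrant by hypothesis, so by Remark \ref{rcsktfib} each $G_m := \hpi X_{m,\bullet}$ has object set $X_{m,0}$ and arrow set $X_{m,1}/\sim_m$, with $\sim_m$ determined by the vertical $2$-simplices in $X_{m,2}$. Then $(N^v\hpi^v\Xdd)_{m,\bullet} = NG_m$, while $(N^v\hpi^v\Xdd)_{\bullet,0} = X_{\bullet,0}$ and $(N^v\hpi^v\Xdd)_{\bullet,1}$ is the simplicial set $m\mapsto X_{m,1}/\sim_m$, a quotient of $X_{\bullet,1}$.

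Claim (i) is then immediate, since the nerve of any groupoid is a Kan complex. For (iv), the map $\ovl{d}_0^h$ is the nerve of the functor $\hpi(d_0^h):G_1\to G_0$ induced by the $\csk{2}$-fibration $d_0^h:X_{1,\bullet}\to X_{0,\bullet}$. The plan is to show this functor is an isofibration: given $a\in X_{1,0}$ and an arrow $[\beta_0]$ of $G_0$ with source $d_0^h(a)$, apply the relative $1$-horn lifting property of $d_0^h$ to the vertex $a$ and the $1$-simplex $\beta_0\in X_{0,1}$, obtaining $\beta\in X_{1,1}$ with $d_0^h\beta = \beta_0$ and the correct source; the class $[\beta]$ is the desired lift. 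The nerve of an isofibration of groupoids is a Kan fibration, giving (iv).

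For (ii) and (iii), it suffices to handle $2$-horn filling in $(N^v\hpi^v\Xdd)_{\bullet,1}$ (constrained by a given $2$-simplex in $X_{\bullet,0}$ for (iii)); $1$-horns are either trivial or, in the constrained case of (iii), handled directly by the $\csk{2}$-fibration $d_0^v$ applied at $n=1$. Given a $2$-horn with edges $[\alpha],[\beta]\in X_{1,1}/\sim_1$ meeting at a class in $X_{0,1}/\sim_0$, I would lift to representatives $\alpha,\beta\in X_{1,1}$; the endpoint-matching then yields a vertical $2$-simplex $\omega\in X_{0,2}$ with, say, $d_0^v\omega = d_1^h\beta$, $d_1^v\omega = d_0^h\alpha$, and $d_2^v\omega$ degenerate. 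The crucial adjustment step is to lift $\omega$ to $\tilde{\omega}\in X_{1,2}$ along the $\csk{2}$-fibration $d_0^h$, with prescribed partial data $d_1^v\tilde{\omega}=\alpha$ and $d_2^v\tilde{\omega}=s_0^v d_1^v\alpha$; compatibility holds by the bisimplicial identity $d_i^v d_0^h = d_0^h d_i^v$ together with the face values of $\omega$. Then $\alpha':=d_0^v\tilde{\omega}$ satisfies $\alpha'\sim_1\alpha$ and $d_0^h\alpha' = d_0^v\omega = d_1^h\beta$ on the nose. The horizontal $2$-horn $(\alpha',\beta)$ in $X_{\bullet,1}$ can then be filled: for (ii) by the $\csk{2}$-fibrancy of $X_{\bullet,1}$, and for (iii) by the $\csk{2}$-fibration $d_0^v$ relative to the given filler in $X_{\bullet,0}$. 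Projecting the filler to the quotient yields the required $2$-cell.

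The main obstacle is the adjustment step itself: one must verify that the proposed partial data truly forms a compatible horn-with-filler problem along $d_0^h$, a task in bisimplicial bookkeeping using the identity $d_i^v d_j^h = d_j^h d_i^v$ and the explicit description of $\sim_m$ via vertical $2$-simplices. The outer $2$-horns $\Lambda^0[2]$ and $\Lambda^2[2]$, as well as the analogous adjustment made on $\beta$ instead of $\alpha$, are routine variants of the same argument.
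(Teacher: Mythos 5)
Your proposal is correct and follows essentially the same route as the paper's Appendix A proof: (i) via the Kan-ness of groupoid nerves, (iv) via $2$-coskeletality of the nerve (Lemma \ref{lfibr}) plus a relative $1$-horn lift along the $\csk{2}$-fibration $d_0^h$, and (ii)--(iii) via adjusting representatives so that faces match on the nose before filling in $X\bull_1$ (resp.\ lifting along $d_0^v$). The only difference is presentational: you spell out the representative-adjustment step explicitly, whereas the paper delegates it to ``as in the proof of Proposition \ref{pone} with the directions switched.''
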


For the proofs of these Propositions, see Appendix A.

%
%
\begin{cor}\label{cladoub}
If \w{\Xd\in s\Ss} satisfies the hypotheses of Proposition \ref{ptwo},
then \w[,]{\hP\Xd=\hpi^{h}\hpi^{v}\Xd} and thus for
a Kan complex \w{X\in\Scf} we have \w[.]{\dP X=\hpi^{h}\hpi^{v}\os X}
\end{cor}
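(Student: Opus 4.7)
The plan is to exploit the factorization of the double nerve \w{N^{v}N^{h}:\TGpd\to ss\Set} in order to identify \w{\hP} as the composition of two left adjoints, each of which can be computed by \w{\hpi} under the fibrancy hypotheses supplied by Propositions \ref{pone} and \ref{ptwo}. Since \w{N^{v}N^{h}} is manifestly the composite \w[,]{s\Gpd\xra{N^{v}}ss\Set} preceded by \w[,]{\TGpd\xra{N^{h}}s\Gpd} by uniqueness of adjoints \w{\hP} factors (wherever defined) as \w[,]{\Ph\circ\PP^{v}} where \w{\PP^{v}:ss\Set\to s\Gpd} is left adjoint to \w{N^{v}} and \w{\Ph:s\Gpd\to\TGpd} is left adjoint to \w[.]{N^{h}}

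First I would compute \w{\PP^{v}\Xdd} when the hypotheses of Proposition \ref{ptwo} hold. Since \w{N^{v}} is applied levelwise (in the horizontal direction), \w{\PP^{v}} is also computed levelwise, and the levelwise \w{\csk{2}}-fibrancy assumption \wh namely, that each \w{X_{i}\bull} is \w{\csk{2}}-fibrant \wh lets us invoke Remark \ref{rcsktfib} to conclude that \w[.]{\PP^{v}\Xdd=\hpi^{v}\Xdd} Next I would apply \w{\Ph} to the resulting simplicial groupoid \w[.]{\hpi^{v}\Xdd} By parts (i)--(iv) of Proposition \ref{ptwo}, this simplicial groupoid satisfies precisely the hypotheses of Proposition \ref{pone} (levelwise fibrancy of objects and arrows, \w{\csk{2}}-fibrancy of \w[,]{X\bull_{1}} and the \w{\csk{2}}-fibration condition on \w[),]{d_{1}^{v}} so Proposition \ref{pone} gives \w[.]{\Ph\hpi^{v}\Xdd=\hpi^{h}\hpi^{v}\Xdd} Combining, \w[,]{\hP\Xdd=\hpi^{h}\hpi^{v}\Xdd} which establishes the first assertion.

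For the second assertion, by Definition \ref{dkanp} we have \w[,]{\dP X=\hP(\os X)} so it suffices to verify that for a Kan complex \w[,]{X\in\Scf} the bisimplicial set \w{\os X} satisfies the hypotheses of Proposition \ref{ptwo}. By definition of total \ww{\Dec} as the comonad resolution associated to \w[,]{(\Dec,+)} each row and column of \w{\os X} is of the form \w{\Dec^{k}X} for some \w[,]{k\geq 1} and the face maps in each direction are the counits \w[.]{\partial:\Dec^{k+1}X\to\Dec^{k}X} Iterated application of Lemma \ref{lfibdec} shows that each \w{\Dec^{k}X} is fibrant (hence certainly \w{\csk{2}}-fibrant), and each such \w{\partial} is a fibration (hence a \w{\csk{2}}-fibration). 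Applying these observations at the appropriate indices gives the required fibrancy for \w[,]{(\os X)_{i}\bull} \w[,]{(\os X)\bull_{i}} and for the face maps \w{d_{0}^{h}} and \w[.]{d_{0}^{v}}

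I expect the main obstacle to be purely bookkeeping: matching up which face operator of \w{\os X} corresponds to which \w{\partial:\Dec^{k+1}X\to\Dec^{k}X} under the definition of total \w[,]{\Dec} in order to conclude that the specific maps \w{d_{0}^{h}} and \w{d_{0}^{v}} singled out in the statement of Proposition \ref{ptwo} are fibrations (rather than some other face map, which would only be known to be a map between fibrant objects). Once the indexing is pinned down, the result reduces to the iterated form of Lemma \ref{lfibdec}, and the two claimed identities follow.
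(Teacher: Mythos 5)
Your proposal is correct and takes essentially the same route as the paper: one factors \w{\hP} as the composite of the left adjoints of \w{N^{v}} and \w[,]{N^{h}} identifies each with the appropriate levelwise fundamental groupoid via Propositions \ref{pone} and \ref{ptwo}, and uses Lemma \ref{lfibdec} (applied to the iterated \w{\Dec} description of \w[)]{\os X} to check the hypotheses of Proposition \ref{ptwo} for a Kan complex. The indexing/bookkeeping issue you flag at the end is real but is glossed over in exactly the same way in the paper's own argument (in the proof of Proposition \ref{pthree}).
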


\begin{remark}\label{ramazur}
The functor \w{\dP} is not actually the left adjoint of \w[.]{\dN} However,
if we replace \w{\diag:ss\Set\to\Ss} by the homotopy equivalent functor
\w{\ovl{W}:ss\Set\to\Ss} of \cite[\S III]{AMaV}, which is right adjoint to
\w[,]{\os} then \w{\dP} will be left adjoint to
\w{\ovl{W}N^{v}N^{h}} (see \cite{CRemeD}).
\end{remark}

\begin{mysubsect}{Two-typical double groupoids}
\label{swgdg}

We now have a natural choice for modeling a space $Y$ by a double
groupoid: choose a Kan complex $X$ weakly equivalent to $Y$, and
take \w[.]{\dP X\in\TGpd} It turns out that the double groupoids
obtained in this way have several convenient properties. First of
all, it is not hard to see that \w{\dP X} is symmetric, in the
following sense:
\end{mysubsect}

\begin{defn}\label{dspecialdg}
A double groupoid \w{\Gss} is called \emph{symmetric} if
\w{G_{0\ast}\cong G_{\ast 0}} and \w{G_{1\ast}\cong G_{\ast 1}} are
isomorphic groupoids. In this case \w{\Gss} may be described more
succinctly by a diagram
\mydiagram[\label{eqwkglgpd}]{
G_{[1]}\ar@<0.7ex>[rr]^{d_{0}} \ar@<-.7ex>[rr]_{d_{1}}  &&
G_{[0]}\ar@/_1.7pc/[ll]_{s}
}
\noindent in which
\mydiagram[\label{eqwkglgp}]{
G_{[1]}=(G_{2}\ar@<0.7ex>[rr]^{d_{0}^{[1]}} \ar@<-.7ex>[rr]_{d_{1}^{[1]}}  &&
G_{1})\ar@/_1.7pc/[ll]_{s^{[1]}}& \text{and} &
G_{[0]}=(G_{1}\ar@<0.7ex>[rr]^{d_{0}^{[0]}} \ar@<-.7ex>[rr]_{d_{1}^{[0]}}
&&  G_{0}) \ar@/_1.7pc/[ll]_{s^{[0]}}
}
are groupoids (isomorphic to \w{G_{1\ast}} and \w[,]{G_{0\ast}}
respectively). To make this precise, one should expand
\wref{eqwkglgp} to a diagram of the form \wref[,]{eqdgpd} satisfying
suitable axioms.

Applying \w{\pi_{0}} to \w{G_{[1]}} and \w{G_{[0]}} yields the following
diagram of sets:
\mydiagram[\label{eqfundgpd}]{
\pi_{0}G_{[1]} \ar@<1ex>[r]\ar@<-1ex>[r] & \pi_{0}G_{[0]}
}
where the two maps are induced by \w[.]{d_{0},d_{1}: G_{[1]}\to G_{[0]}}
This is equivalent to applying the functor \w{\pi_{0}} either
horizontally or vertically to \w{\Gss} in the diagram \wref{eqdgpd}
(or equivalently, to the symmetric bisimplicial set \w[)\vsm.]{N^{v}N^{h}\Gss}
\end{defn}

In addition, Propositions \ref{pone} and \ref{ptwo} imply that \w{\dP X}
satisfies certain fibrancy conditions:

\begin{defn}\label{dweakglob}
For any groupoid $G$,  let \w{G^{d}} denote the discrete groupoid
  on the set \w[.]{\pi_{0}G} This comes equipped with a map of groupoids
  \w[.]{\gamma:G\to G^{d}}

A double groupoid \w{\Gss} is called \emph{weakly globular} if
\begin{enumerate}
\renewcommand{\labelenumi}{(\alph{enumi})\ }
\item The map \w{\gamma:G_{0\ast}\to G_{0\ast}^{d}} is a weak equivalence.
\item Both \w{N^{v}d^{h}_{0}} and \w{N^{v}d^{h}_{1}} are fibrations of
  simplicial sets, where \w{d^{h}_{0},d^{h}_{1}: G_{1\ast}\to G_{0\ast}}
  are maps of (vertical) groupoids.
\end{enumerate}
\end{defn}

\begin{remark}\label{rweakglob}
Note that a strict $2$-groupoid (i.e., a groupoid enriched in
groupoids \wh cf.\ \cite[\S 7.7]{BorcH1}) is an example of a weakly
globular double groupoid, since a map of simplicial sets with discrete
target is a fibration. The internal version of this concept (in
groups) is the notion of weakly globular \ww{\cat^{2}}-group of
\cite{PaolW} (in this case the fibrancy conditions are automatically
satisfied).
\end{remark}

\begin{defn}\label{dtwotypical}
A double groupoid is called \emph{two-typical} if it is symmetric and
weakly globular. The full subcategory of \w{\TGpd} whose objects are
two-typical double groupoids will be denoted by \w[.]{\TGw}
\end{defn}

%
%
\begin{lemma}\label{lfibr}
Let \w{p:\G\to\G'} be a map of groupoids. Suppose  that the
    diagram:
$$
\xymatrix{
        \Lambda^k[n]\ar^{f}[r]\ar@{^{(}->}_{i}[d] &  N\G \ar^{N p}[d]\\
        \Delta[n]\ar_{h}[r] \ar@{.>}[ru] &  N\G'
        } \\
$$
has a lift \w{\Delta[n]\to N\G} when \w[.]{n=1} Then \w{N p} is a fibration.
\end{lemma}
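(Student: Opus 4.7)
The plan is to verify horn-filling for $Np$ in each dimension $n \geq 1$ and for each $0 \leq k \leq n$. The case $n = 1$ is exactly the hypothesis of the lemma, so the real content is handling $n \geq 2$. For that I would invoke the standard fact that the nerve of any groupoid admits \emph{unique} horn fillers in dimensions $\geq 2$. In dimension $2$, each of the three horns $\Lambda^{k}[2] \to N\G$ is filled uniquely by composing the two given $1$-simplices and, for the outer horns, inverting one of them (possible because $\G$ is a groupoid). In dimensions $n \geq 3$, one has $\sk{2} \Lambda^{k}[n] = \sk{2} \Delta[n]$; combined with the $2$-coskeletality of $N\G$, this implies that the restriction map $\Hom(\Delta[n], N\G) \to \Hom(\Lambda^{k}[n], N\G)$ is a bijection, so every horn extends uniquely.

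Given this uniqueness, the rest is formal. Fix $n \geq 2$ and a lifting diagram as in the statement, with horn $f : \Lambda^{k}[n] \to N\G$ and filler $h : \Delta[n] \to N\G'$ of $Np \circ f$. Let $\tilde h : \Delta[n] \to N\G$ be the unique filler of $f$ provided by the groupoid structure of $\G$. Then $Np \circ \tilde h$ and $h$ are both fillers of the horn $Np \circ f : \Lambda^{k}[n] \to N\G'$; by the uniqueness applied to $\G'$, they coincide, so $\tilde h$ is the required lift.

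The uniqueness statement for nerves of groupoids is entirely standard, so there is no serious obstacle here; the lemma is really a statement about reducing an a priori infinite family of horn-filling conditions to the single dimension where uniqueness fails. The $n = 1$ hypothesis genuinely cannot be dispensed with, since the horns $\Lambda^{k}[1]$ are single vertices and there is no composition-and-inversion argument available to produce a $1$-simplex lift of a given edge in $N\G'$.
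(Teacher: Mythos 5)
Your overall strategy coincides with the paper's (whose entire proof is the single line ``this follows because the nerve of a groupoid is $2$-coskeletal''): reduce to the hypothesis in dimension $1$ by showing that $N\G$ and $N\G'$ have \emph{unique} horn fillers in every dimension $\geq 2$, and then observe that uniqueness forces $Np\circ\tilde h=h$. That invoked fact is true and standard, and your formal deduction from it is correct. There is, however, a slip in your justification at $n=3$: it is \emph{not} true that $\sk{2}\Lambda^{k}[3]=\sk{2}\Delta[3]$, because $\Lambda^{k}[3]$ omits the $k$-th face, which is a nondegenerate $2$-simplex. Consequently $2$-coskeletality alone does not fill $\Lambda^{k}[3]$; one must also produce the missing $2$-face, i.e.\ verify the one remaining composition identity from the three that are given, which is done by cancelling an invertible arrow (so invertibility in $\G$ is genuinely used here for the outer horns, exactly as in your $n=2$ case). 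Your skeletal argument is valid only from $n\geq 4$ on, where every subset of $\{0,\dotsc,n\}$ of size $\leq 3$ misses at least two vertices and hence lies in some retained face. With the $n=3$ case patched in this way the proof is complete.
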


\begin{proof}
This follows because the nerve of a groupoid is $2$-coskeletal.
\end{proof}

We now show that the functor \w{\dP:\Ss\to\TGpd} of Definition
\ref{dkanp} indeed takes values in \w{\TGw} (when applied to a Kan
complex):

\begin{prop}\label{pthree}
If $X$ is a fibrant simplicial set, then \w{\dP X} is a two-typical
double groupoid.
\end{prop}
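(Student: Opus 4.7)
By Corollary \ref{cladoub}, for fibrant $X$ we have $\dP X=\hpi^{h}\hpi^{v}\os X$, so I must verify two things: that $\dP X$ is symmetric (Definition \ref{dspecialdg}) and that it is weakly globular (Definition \ref{dweakglob}). The first step is to confirm that $\os X$ satisfies the hypotheses of Propositions \ref{pone} and \ref{ptwo}. Since $(\os X)_{n,m}=X_{n+m+1}$, every row $(\os X)_{i,\bullet}$ and column $(\os X)_{\bullet,i}$ is, up to reindexing, the iterated $\Dec^{i+1}X$, which is fibrant by repeated application of Lemma \ref{lfibdec}; the horizontal and vertical face maps $d_{0}^{h}$ and $d_{0}^{v}$ are fibrations for the same reason. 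Thus conclusions (i)--(iv) of Proposition \ref{ptwo} are all available for $\Xdd=\os X$.

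For symmetry, I compute both horizontal and vertical slices directly: $G_{0\ast}=\hpi((\os X)_{0,\bullet})=\hpi(\Dec X)$, while $G_{\ast 0}$ is $\hpi$ applied to the horizontal simplicial set of objects $n\mapsto X_{n+1}=(\Dec X)_{n}$, which is again $\hpi(\Dec X)$; similarly $G_{1\ast}$ and $G_{\ast 1}$ each coincide with the fundamental groupoid of $\Dec^{2}X$ modulo the equivalence relation induced by the relevant $2$-simplices. This gives the isomorphisms $G_{0\ast}\cong G_{\ast 0}$ and $G_{1\ast}\cong G_{\ast 1}$ of Definition \ref{dspecialdg}, with the underlying symmetry coming from the natural identification of $\os X$ with its transpose (reflecting the fact that $(\os X)_{n,m}$ and $(\os X)_{m,n}$ are both canonically $X_{n+m+1}$).

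For weak globularity condition (a), the extra degeneracy on $\Dec X$ exhibits a simplicial homotopy equivalence from $\Dec X$ to the discrete simplicial set $X_{0}$, so $\hpi(\Dec X)=G_{0\ast}$ is equivalent to the discrete groupoid on $X_{0}$, making $\gamma\colon G_{0\ast}\to G_{0\ast}^{d}$ a weak equivalence. For condition (b), I need $N^{v}d_{0}^{h}$ and $N^{v}d_{1}^{h}$ to be fibrations of simplicial sets: the first is precisely Proposition \ref{ptwo}(iv), and the second follows by running the same argument with $d_{1}$ in place of $d_{0}$, or more efficiently by invoking Lemma \ref{lfibr} to reduce fibrancy of a map of nerves of groupoids to the horn-filling condition at $n=1$, which is automatic from the fibrancy of the underlying $\Dec^{i+1}X$. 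The main obstacle I anticipate is not a deep difficulty but careful bookkeeping: I must match the four groupoids $G_{ij}$ appearing in \eqref{eqdgpd} with the iterated composite $\hpi^{h}\hpi^{v}\os X$ evaluated at the appropriate bi-indices, and track how the index-shifts coming from $\os$ and $\Dec$ align with the source/target and composition structures of Definition \ref{ddgpd} so that the symmetry and fibration arguments apply cleanly.
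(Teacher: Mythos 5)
Your overall strategy is the same as the paper's: reduce to \w[,]{\dP X=\hpi^{h}\hpi^{v}\os X} check the hypotheses of Propositions \ref{pone} and \ref{ptwo} for $\os X$ via Lemma \ref{lfibdec}, deduce symmetry from that of \w[,]{\os X} and obtain condition (a) of Definition \ref{dweakglob} from the contracting homotopy \w[.]{\Dec X\simeq c(X_{0})} All of that is fine. The one step that does not close as written is condition (b).

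The map required there is \w{N^{v}d_{0}^{h}:N^{v}G_{1\ast}\to N^{v}G_{0\ast}} for \w[.]{G=\hpi^{h}\hpi^{v}\os X} Writing \w{Z_{n}:=\hpi((\os X)_{n}^{h})} for the vertical fundamental groupoids, the groupoid \w{G_{1\ast}} has as objects not \w{\Obj Z_{1}=X_{2}} but its further quotient by the \emph{horizontal} fundamental-groupoid relation: it is the groupoid denoted \w{X_{1\bullet}\bsim} in the proof of Proposition \ref{pone}. Proposition \ref{ptwo}(iv) applied to \w{\Xdd=\os X} only yields that \w{NZ_{1}\to NZ_{0}} is a fibration \wh the statement one quotient too early, about a different simplicial set than \w[.]{N^{v}G_{1\ast}} This is exactly why the paper first observes that \w{N^{v}\hpi^{v}\os X} itself again satisfies the hypotheses of Proposition \ref{ptwo}, and then applies that proposition a \emph{second} time (in the transposed direction, using conclusion (iii)) to get fibrancy of the nerve of \w[,]{G_{\ast 1}\to G_{\ast 0}} which by symmetry is what Definition \ref{dweakglob}(b) asks for. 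Your proof needs that second application; the single appeal to \ref{ptwo}(iv) identifies the wrong map. A smaller point: ``running the same argument with \w{d_{1}} in place of \w{d_{0}}'' is not available, since the hypotheses of Proposition \ref{ptwo} only make \w{d_{0}} a \ww{\csk{2}}-fibration; the clean fix is that horizontal inversion is an automorphism of the groupoid \w{G_{1\ast}} interchanging \w{d_{0}^{h}} and \w[,]{d_{1}^{h}} so fibrancy of \w{N^{v}d_{1}^{h}} follows formally from that of \w[.]{N^{v}d_{0}^{h}}
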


\begin{proof}
Let $X$ be a fibrant simplicial set.  Evidently \w{\dP X} is
symmetric (that is, \w{\dP X_{i,j}\cong\dP X_{j,i}} for all
\w[),]{i,j} since \w{\os X} is. Recall that for \w[:]{i>0}
\begin{equation*}
    \begin{split}
      (\os X)_{0\ast}= & \Dec X=(\os X)_{\ast0} \\
      (\os X)_{i\ast}= & \Dec(\os X)_{i-1,\ast} \\
      (\os X)_{\ast i}= & \Dec(\os X)_{\ast,i-1}~.
    \end{split}
\end{equation*}

By Lemma \ref{lfibdec}, it follows that \w{\os X} satisfies the hypotheses of
Proposition \ref{ptwo}. Hence, by Proposition \ref{ptwo} we see
that \w{\hpi^{v}\os X\in s\Gpd} satisfies the hypotheses of Proposition
\ref{pone}. Thus if \w{\Ph} is the left adjoint to \w[,]{N^{h}}
we see \w{\Ph\hpi^{v}\os X} is computed by applying \w{\hpi^{h}}
levelwise in the horizontal direction in the bisimplicial set
\w[.]{N^{v}\hpi^{v}\os X} That is:
\begin{myeq}\label{eqhvp}
\hP\os X~=~\Ph\hpi^{v}\os X~=~\hpi^{h}\hpi^{v}\os X
\end{myeq}

Furthermore, by Proposition \ref{ptwo} the bisimplicial set
\w{N^{v}\hpi^{v}\os X} itself satisfies the hypotheses
of Proposition \ref{ptwo}; therefore we conclude by Proposition
\ref{ptwo} iii) that \w{N d_{0}^{v}} is a fibration, for
\w[.]{d_{0}^{v}:(\hP\os X)_{\ast1}\to(\hP\os X)_{\ast0}}

To show that \w{\hP\os X} is weakly globular, it remains to show that
the vertical groupoid \w{(\hP\os X)_{0\ast}=\hpi\Dec X} is equivalent
to a discrete groupoid. Recall that if we let \w{c(X_{0})} denote the
constant simplicial set on the set \w[,]{X_{0}} there are maps
\w{c(X_{0})\supar{s}\Dec X\supar{v}c(X_{0})} with \w[,]{v\circ s=\Id}
and there is a simplicial homotopy equivalence \w[.]{v\circ s\simeq\Id}
Thus the simplicial sets \w{c(X_{0})} and \w{\Dec X} are weakly
equivalent and therefore have equivalent fundamental groupoids. There
are thus equivalences of groupoids \w{r:(\hP\os X)_{0\ast}\to\hpi c(X_{0})}
and \w[,]{t:\hpi c(X_{0})\to(\hP\os X)_{0}} with \w[,]{r\circ t=\Id} and
\w{\hpi c(X_{0})} is the discrete groupoid on the set \w[.]{X_{0}}
\end{proof}

%
%
\section{Two-typical double groupoids and $2$-types}
\label{cdgttype}

We now show that a two-typical double groupoid \w{\Gss} is a
$2$-type, in the sense that its diagonal nerve, the simplicial set
\w[,]{\dN\Gss} is in \w{\Po{2}\Ss}  (cf.\ \S \ref{snot}).
Actually, we prove a little more:

%
%
\begin{prop}\label{pfour}
For any weakly globular \w[,]{\Gss\in\TGpd} the realization
\w{X:=\dN\Gss\in\Ss} is a $2$-type.
\end{prop}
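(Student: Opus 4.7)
The plan is to apply the Bousfield--Friedlander spectral sequence
\[
E^2_{p,q} \;=\; \pi_p^h(\pi_q^v W,\ast) \;\Longrightarrow\; \pi_{p+q}(\diag W,\ast)
\]
to the bisimplicial set $W := N^v N^h \Gss$, whose diagonal is $X=\dN\Gss$, pointed at a chosen $\ast\in G_{00}$. The fibration condition (b) of weak globularity, together with the fact that each column $W_{n,\bullet}=N(G_{n,\ast})$ is a Kan complex (being the nerve of a groupoid), supplies the $\pi_{\ast}$-Kan hypothesis needed. The goal is then to show that $E^2_{p,q}=0$ whenever $p+q\geq 3$.

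Since every column $W_{n,\bullet}$ is a $1$-type, $\pi_q^v W=0$ for $q\geq 2$, so only the rows $q=0$ and $q=1$ can carry nonzero terms. For the $q=0$ row, I would identify $\pi_0^v W$ with the nerve of a groupoid. Because $G_{n,\ast}$ is the iterated strict pullback $G_{1,\ast}\times_{G_{0,\ast}}\cdots\times_{G_{0,\ast}} G_{1,\ast}$ in $\Gpd$, and condition (b) of weak globularity makes the face maps $d_0^h,d_1^h\colon G_{1,\ast}\to G_{0,\ast}$ into fibrations of groupoids, the strict pullback computes the homotopy pullback and $\pi_0$ commutes with it. Hence $\pi_0^v W$ is the nerve of the category $H$ with $H_0=\pi_0 G_{0,\ast}$ and $H_1=\pi_0 G_{1,\ast}$; the horizontal inverses in $\Gss$ descend to $H$, making $H$ a groupoid, so $E^2_{p,0}=\pi_p(NH)=0$ for $p\geq 2$.

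For the $q=1$ row, I would compute $(\pi_1^v W)_n = \operatorname{Aut}_{G_{n,\ast}}(s^n\ast)$, where $s^n\ast=(\mathrm{id}_\ast,\ldots,\mathrm{id}_\ast)\in G_{n,0}$. Such an automorphism is an $n$-tuple $(s_1,\ldots,s_n)$ of squares whose horizontal top and bottom boundaries are all $\mathrm{id}_\ast$; their vertical left/right boundaries then necessarily lie in $\operatorname{Aut}_{G_{0,\ast}}(\ast)$, which is trivial by condition (a) of weak globularity. Hence each $s_i$ lies in the set $K$ of squares with all four boundaries equal to $\mathrm{id}_\ast$, an abelian group by the Eckmann--Hilton argument applied to the horizontal and vertical compositions, which share a unit and satisfy interchange. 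The horizontal simplicial structure on $(\pi_1^v W)_n = K^n$ is then precisely the nerve $N(BK)$, so $E^2_{p,1}=\pi_p(N(BK))=0$ for $p\neq 1$.

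Combining these vanishings gives $E^2_{p,q}=0$ whenever $p+q\geq 3$, which forces $\pi_n(X)=0$ for all $n\geq 3$ and proves $X$ is a $2$-type. The main obstacle is the identification at $q=0$: one must verify that $\pi_0$ commutes with the iterated vertical fiber products defining $G_{n,\ast}$, and this is exactly where condition (b) of weak globularity is essential, since it is what promotes the strict pullback to a homotopy pullback of groupoids. A secondary care point is verifying the $\pi_\ast$-Kan hypothesis to justify the spectral sequence; this again reduces to (b), inductively pushing the fibration condition up to higher fiber powers.
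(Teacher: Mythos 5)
Your route is genuinely different from the paper's. The paper does not use the Bousfield--Friedlander spectral sequence here: it builds from $\Zd=N^{h}\Gss$ a Tamsamani weak $2$-groupoid $\Yd$ by pushing out the degeneracies $s_{(n)}\colon Z_{0}\to Z_{n}$ along the discretization $\gamma\colon G_{0\ast}\to G_{0\ast}^{d}$, checks that $\Zd\to\Yd$ is a levelwise equivalence and that the Segal maps of $\Yd$ are equivalences (via right properness and Lemma \ref{lfibr}), and then quotes Tamsamani's theorem that $\eN\Yd$ is a $2$-type. Your argument is more self-contained (no appeal to \cite{TamsN}) and computes the relevant homotopy groups directly; the paper's construction has the advantage of producing the comparison functor $T\colon\TGw\to\Tam$ and the explicit Postnikov data that Remark \ref{rpost} later exploits.

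There is, however, a genuine gap in your $q=0$ row, at exactly the point you single out as the main obstacle. You claim that condition (b) alone makes $\pi_{0}$ commute with the iterated fibre products $G_{n\ast}=G_{1\ast}\times_{G_{0\ast}}\cdots\times_{G_{0\ast}}G_{1\ast}$, because the strict pullback computes the homotopy pullback. But $\pi_{0}$ does not commute with homotopy pullbacks of groupoids: take $G_{0\ast}$ to be the one-object groupoid with automorphism group $\ZZ$ and $G_{1\ast}=E\ZZ$ its codiscrete cover, with the covering functor (an isofibration, so its nerve is a Kan fibration); then $\pi_{0}(E\ZZ\times_{B\ZZ}E\ZZ)\cong\ZZ$, while the pullback of the $\pi_{0}$'s is a point. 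The failure is measured by the automorphism groups of $G_{0\ast}$, so the step really needs condition (a): since $\gamma\colon G_{0\ast}\to G_{0\ast}^{d}$ is an equivalence these automorphism groups vanish, and $\pi_{0}$ of a fibre product over a homotopically discrete base is the fibre product of the $\pi_{0}$'s --- which is precisely how the paper argues after replacing $G_{0\ast}$ by $G_{0\ast}^{d}$ (``$\pi_{0}\colon\Gpd\to\Set$ preserves fibre products over discrete objects''). The same misattribution affects your verification of the $\pi_{\ast}$-Kan condition: what makes it hold is again (a), which forces $\operatorname{Aut}_{G_{n\ast}}(f_{1},\dots,f_{n})$ to split as a product over the $f_{j}$ (the shared vertical edges are automorphisms in $G_{0\ast}$, hence identities), so that compatible families on the faces can be assembled; condition (b) by itself does not give this. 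Your $q=1$ computation is correct and does invoke (a) in the right place. With these repairs --- and the routine remark that the argument is uniform in the chosen basepoint $\ast\in G_{00}$ --- your proof goes through.
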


\begin{proof}
Since \w{\dN\Gss} is the diagonal of the bisimplicial set
\w[,]{N^{v}N^{h}\Gss} it suffices to find a simplicial groupoid
\w{\Yd} such that \w{\eN\Yd:=\diag N^{v}\Yd} is a $2$-type, together
with a map of simplicial groupoids \w{N^{h}\Gss\to\Yd} which is a
weak equivalence in each simplicial dimension, so that the induced map
\begin{myeq}\label{eqdiagequiv}
\dN\Gss=\diag N^{v}N^{h}\Gss~\xra{\simeq}~\eN\Yd
\end{myeq}
is a weak equivalence, too.

Recall that a \emph{Tamsamani weak $2$-groupoid}
is a simplicial groupoid \w{\Yd\in s\Gpd} such that:
\begin{enumerate}
\renewcommand{\labelenumi}{(\alph{enumi})\ }
\item \w{Y_{0}} is discrete;
\item The Segal maps
\begin{myeq}\label{eqsegal}
\eta_{n}:Y_{n}~\to~
\underbrace{Y_{1}\times_{Y_{0}}Y_{1}\dotsc Y_{1}\times_{Y_{0}}Y_{1}}_{n}
\end{myeq}
are equivalences of groupoids (that is, \w{N\eta_{n}} is a weak
equivalence). The full subcategory of such objects in \w{s\Gpd} is denoted
by \w[.]{\Tam}
\item The simplicial set \w[,]{\pi_{0}\Yd} obtained by applying
  \w{\pi_{0}} in each simplicial dimension, is the nerve of a groupoid.
\end{enumerate}

In this case, the simplicial set \w{\eN\Yd} is in \w[.]{\Po{2}\Ss} Moreover,
simplicial sets of the form \w{\eN\Yd} model all $2$-types of simplicial sets,
up to homotopy (see \cite{TamsN}).

The idea of the construction of $\Yd$ is to make $\Zd$ into a globular
object by replacing the groupoid $Z_0=G_{0*}$ with the discrete groupoid
${G^d_{0*}}$, in a way that does not change the homotopy type of $\Zd$. By
pushing out the unique degeneracy map $s_{(n)}:Z_0\to Z_n$ along the
discretization map $\gamma:G_{0*}\to G^d_{0*}$ one obtains a simplicial
groupoid $\Yd$; since $\gamma$ is an equivalence, it is easily seen that the
simplicial map $\Zd\to\Yd$ is a levelwise equivalence. This, together with the
fact that the Segal maps in $\Zd$ are isomorphisms and the maps $Nd_0, Nd_1:
NZ_1\to NZ_0$ are fibrations also implies that the Segal maps of $\Yd$ are
equivalences. In conclusion, $\Yd$ will be a Tamsamani weak 2-groupoid with the
same homotopy type as $\Zd$. The details are as follows:

Let \w{\Zd=N^{h}\Gss} and \w{Y_{0}:=G_{0\ast}^{d}}  (see Definition
\ref{dweakglob}).
Note for each \w[,]{n\geq 0} there is a unique morphism
\w{s_{(n)}:\bze\to\bn} in \w[.]{\Delta\op} We use the same
notation \w{s_{(n)}:W_{0}\to W_{n}} for any simplicial object \w[.]{\Wd}
For each \w[,]{n>0} \w{Y_{n}} is defined to be the pushout in \w[:]{\Cat}
\begin{equation*}
    \xymatrix{
    G_{0\ast}=:Z_{0} \ar^<<<<<<<<<{s_{(n)}}[rr] \ar^{\gamma=:f_{0}}[d] &&
    Z_{n} \ar^{f_{n}}[d]\\
    G_{0\ast}^{d}=:Y_{0} \ar^<<<<<<<<<{\sigma_{(n)}}[rr] && Y_{n}
    }
\end{equation*}
We claim that \w{Y_{n}} is a groupoid and \w{f_{n}} is a weak
equivalence. In fact, since \w{\Gss} is weakly globular, $\gamma$ is an
equivalence of groupoids, and thus also a categorical equivalence. The
map \w{s_{(n)}} is injective on objects. But the pushout in \w{\Cat}
of a categorical equivalence by a map which is injective on objects is
a categorical equivalence (see \cite{JStreP}); hence \w{f_{n}} is a
categorical equivalence, and thus a weak equivalence. Since a category
equivalent to a groupoid is itself a groupoid, \w{Y_{n}} is a
groupoid. This proves the claim.

Let \w{\phi:\bn\to\bm} be any morphism in \w[.]{\Delta\op}
Then \w[,]{\phi s_{(n)}=s_{(m)}} by the uniqueness, so that
\w[.]{f_{m}\phi s_{(n)}=f_{m}s_{(m)}=\sigma_{(m)}f_{0}:Z_{0}\to Y_{m}}
From the universal property of pushouts there is thus a unique map
\w{\hat{\phi}:Y_{n}\to Y_{m}} such that \w{\hat{\phi} f_{n}=f_{m}\phi} and
\w[.]{\hat{\phi}\sigma_{(n)}=\sigma_{(m)}} In particular we have maps
\w{\hat{\partial}_{i}:Y_{n}\to Y_{n-1}} for \w{0\leq i\leq n} and
\w{\hat{\sigma}_{i}:Y_{n-1}\to Y_{n}} for \w[.]{0\leq i< n}

The maps \w{\hat{\partial}_{i}} and \w{\hat{\sigma}_{i}}
\wb{0\leq i\leq n<\infty} satisfy the simplicial identities, so that
these make \w{\Yd=(Y_{n})_{n=0}^{\infty}} into a simplicial groupoid.
To see this, let \w[,]{\phi:\bn\to\bm} and \w{\psi:\bm\to\bk} be any
morphisms in \w{\Delta\op} with \w[.]{\xi:=\phi\circ\psi} Then:
\begin{equation*}
%
\hat{\xi}\sigma_{(n)}=\sigma_{k)}~=~
\hat{\phi}_{1}\sigma_{(m)}=\hat{\phi}_{1}\hat{\phi}\sigma_{(n)}\hsp\text{and}\hsp
\hat{\xi}f_{n}=f_{k}\xi~=~
f_{k}\psi\phi=\hat{\phi}' f_{m}\phi=\hat{\psi}'\hat{\phi} f_{n}~.
%
\end{equation*}
It follows by universality of pushouts that
\w[.]{\hat{\xi}=\hat{\psi}'\hat{\phi}} In particular, since the
simplicial identities are satisfied by the maps \w{\partial_{i}} and
\w[,]{\sigma_{i}} they are satisfied by \w{\hat{\partial}_{i}} and
\w[.]{\hat{\sigma}_{i}}

We now prove that \w{\Yd\in s\Gpd} is a Tamsamani weak $2$-groupoid. By
construction, \w{Y_{0}=N^{v}G_{\ast 0}^d} is discrete. We need to prove that,
for
each \w[,]{n\geq 2} the Segal maps \wref{eqsegal} are equivalences of groupoids.

Consider the case \w[.]{n=2} There is a commutative diagram in \w[:]{\Ss}
\begin{myeq}\label{eqgpdopn}
\begin{split}
&
\xymatrix{N Z_{2} \ar@{=}[rr] \ar_{f_{2}}[d] && \tens{N Z_{1}}{N Z_{0}}
\ar^{f_{1}\times f_{1}}[d]\\
N Y_{2} \ar_{\eta_{2}}[rr] && \tens{N Y_{1}}{N Y_{0}}
}
\end{split}
\end{myeq}
We claim that \w{f_{1}\times f_{1}} is a weak equivalence. In fact, there is a
commutative diagram in $\Ss$:
$$
\xymatrix{
N Z_{1} \ar^{N d_{0}}[rr] \ar_{N f_{1}}[d] && N Z_{0} \ar^{N f_{0}}[d]
    && N Z_{1} \ar_{N d_{1}}[ll] \ar^{N f_{1}}[d]\\
    N Y_{1} \ar_{N d'_{0}}[rr] && NY_{0} && NY_{1} \ar^{Nd'_{1}}[ll]
    }
$$
In this diagram, the map in each column is a weak equivalence; the map
\w{N d_{0}} is a fibration since \w{\Gss} is weakly globular. Since \w{N Y_{0}}
is constant, the map \w{N d'_{0}} satisfies the hypotheses of Lemma
\ref{lfibr}, and is therefore a fibration. Since the standard
model structure on $\Ss$ is right proper, we can apply
\cite[Proposition 13.3.9]{PHirM} to conclude that the induced map of
pullbacks \w{f_{1}\times f_{1}} is a weak equivalence, as claimed.

From above, we know that \w{f_{2}} is also a weak equivalence. The
commutativity of \wref{eqgpdopn} therefore implies that \w{\eta_{2}}
is a weak equivalence. Similarly one shows that \w{\eta_{n}} is a weak
equivalence for each \w[.]{n>2} To show that \w{\Yd} is a Tamsamani
weak $2$-groupoid, it remains to check that \w{\pi_{0}\Yd} is the nerve of a
groupoid. For this, notice that there is a commutative diagram in $\Ss$:
$$
\xymatrix{
N Z_{1} \ar^{N d_{0}}[rr] \ar@{=}_{}[d] && N Z_{0} \ar^{N f_{0}}[d] &&
N Z_{1} \ar_{N d_{1}}[ll] \ar@{=}^{}[d]\\
N Z_{1} \ar_{N dd_{0}}[rr] && N Y_{0} && N Y_{1} \ar^{N dd_{0}}[ll]
}
$$
Again each vertical map is a weak equivalence and each horizonal map
is a fibration. Thus we conclude that the induced map of pullbacks
$$
N(\tens{Z_{1}}{Z_{0}})=\tens{N Z_{1}}{N Z_{0}}
\to\tens{N Z_{1}}{N Y_{0}}= N(\tens{Z_{1}}{Y_{0}})
$$
is a weak equivalence. Thus the map of groupoids
\w{\tens{Z_{1}}{Z_{0}}\to\tens{Z_{1}}{Y_{0}}} is an equivalence of
categories. Since the functor \w{\pi_{0}:\Gpd\to\Set} preserves fibre
products over discrete objects, we have an isomorphism
\begin{equation*}
\begin{split}
(\pi_{0}\Zd)_{n}~\cong ~
\pi_{0}(\underbrace{Z_{1}\times_{Z_{0}}\dotsc
\times_{Z_{0}}Z_{1}}_{n})
~\cong&~(\underbrace{\pi_{0}Z_{1}\times_{\pi_{0}Z_{0}}\dotsc
\times_{\pi_{0}Z_{0}}\pi_{0}Z_{1}}_{n})\\~\cong&~
\underbrace{(\pi_{0}\Zd)_{1}\times_{(\pi_{0}\Zd)_{0}}\dotsc
\times_{(\pi_{0}\Zd)_{0}}(\pi_{0}\Zd)_{1}}_{n}~.
\end{split}
\end{equation*}
This shows that the simplicial set \w{\pi_{0}\Zd} has all Segal maps
isomorphisms, and is therefore the nerve of a category. In fact, since
\w{\Gss} is a double groupoid, \w{\pi_{0}\Zd} is the nerve of a groupoid.

Since for each \w[,]{n\geq 0} \w{f_{n}:Z_{n}\to Y_{n}} is a weak
equivalence, \w[.]{(\pi_{0}\Yd)_{n}=(\pi_{0}\Zd)_{n}}
Hence \w[,]{\pi_{0}\Yd\cong\pi_{0}\Zd} so that from above
\w{\pi_{0}\Yd} is also the nerve of a groupoid, as required.
This actually defines a functor \w{T:\TGw\to\Tam} with \w[.]{T(\Gss)=\Yd}

This concludes the proof that \w{\Yd} is a Tamsamani weak $2$-groupoid. From
\cite{TamsN}, we know that \w{\eN\Yd} is a $2$-type. Since the map
\w{f:N^{v}\Gss=\Zd\to\Yd} is a levelwise weak equivalence, the map
\w{N^{h} f} is a levelwise weak equivalence in \w[,]{s\Ss} and
therefore \w{\dN f} is a weak equivalence. It follows by
\wref{eqdiagequiv} that \w[,]{\dN\Gss~\simeq~\eN\Yd} so that
\w{\dN\Gss} is a $2$-type.
\end{proof}

\begin{remark}\label{rpost}
We actually can read off more information from a two-typical double
groupoid \w[,]{\Gss} than just its $2$-type. Namely, we can describe
algebraically its Postnikov decomposition. This will be useful later, in
defining the homotopy track category of a $2$-track category (see \ref{shtc}) and
the notion of  $2$-track extension (see \ref{dbwcc}).
 First, observe that in the proof of Proposition \ref{pfour} we have
 shown that if \w{\Gss} is any weakly globular double groupoid, then
 the diagram \wref[,]{eqfundgpd} obtained by applying the coequalizer
 \w{\pi_{0}} (horizontally or vertically) to \w{\Gss} itself has the
 structure of a groupoid, which we call the \emph{fundamental groupoid} 
of \w[,]{\Gss} and denote by \w[.]{\hpi\Gss} 

Also, there is a simplicial map \w[,]{\Zd=\N^{h}\Gss\to c\hpi\Gss} where
\w{c\hpi\Gss} denotes the constant simplicial groupoid on
\w[.]{\hpi\Gss} Likewise, applying $\pi_0$ in each simplicial
dimension to the simplicial groupoid \w{\Yd} yields the nerve of a
groupoid \w[.]{\hpi\Yd} By the proof of \ref{pfour}, the map 
\w{\Zd\to\Yd} induces an isomorphism \w[.]{\hpi\Gss\cong\hpi\Yd}
Further, by \cite{TamsN}, \w{\pi_{0}\hpi\Yd=\pi_{0}B\Yd} and
\w[,]{\pi_{1}\hpi\Yd(\Id_{\ast})=\pi_{1}(B\Yd,\ast)} where
\w{B=\diag\circ\ovl{\N}} is the realization functor. 

Since \w{\Zd} and \w{\Yd} have the same homotopy types, we conclude
that the map \w{\Zd\to c\hpi\Gss} induces isomorphisms of homotopy
groups in dimension $0$ and $1$. Hence this map gives the last stage
of the Postnikov decomposition of \w[.]{\Gss} In Theorem
\ref{tmodeltt}, we will show that, given \w[,]{X\in\St} \w{\dP X}
represents the $2$-type of $X$. The algebraic description of the
Postnikov decomposition of \w{\dP X} given above translates into the
fact that, for each \w[,]{g\in G_{0}} the fundamental group of the
groupoid \w{G_{[1]}=G_{2}\toto G_{1}} based at \w{\Id_{g}} (in the
notation of \S \ref{dspecialdg}) is isomorphic to the local system
\w[.]{\pi_{2}(X,[g])}  Thus we can actually recover the Postnikov
system of a $2$-type $X$ algebraically from its two-typical model 
\w[\vsm.]{\dP X}
\end{remark}

Proposition \ref{pfour} shows that the functor \w{\dN:\TGw\to\Ss}
takes values in \w{\St} (see \S \ref{snot}) \wh i.e., the realization
of a two-typical double groupoid is a $2$-type.  As before, we say
that a map $f$ in \w{\TGpd} is a \emph{weak equivalence} if \w{\dN f} is a weak
equivalence in $\Ss$. We now show that we have a one-to-one
correspondences of weak equivalence classes of objects of \w{\TGpd}
and \w[:]{\St}

%
%
\begin{thm}\label{tmodeltt}
The functors \w{\dP:\St\to\TGw} and \w{\dN:\TGw\to\St}
induce equivalences of categories after localization:
\mydiagram[\label{eqeqloc}]{
\St\bsim\ar@<1ex>[r]^{\dP} &\TGw\bsim~.\ar@<1ex>[l]^{\dN}
}
\end{thm}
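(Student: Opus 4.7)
The plan is to leverage the near-adjunction of Remark~\ref{ramazur} to produce a derived adjunction on the localized categories, and then promote it to an equivalence by means of a single unit computation.

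First I would verify that both functors preserve weak equivalences, so they descend to the localizations. For $\dN$ this is immediate from the definition of weak equivalences in $\TGpd$. For $\dP=\hP\circ\os$, Corollary~\ref{cladoub} gives $\dP X=\hpi^{h}\hpi^{v}\os X$ whenever $X$ is fibrant, and by Lemma~\ref{lfibdec} the bisimplicial set $\os X$ satisfies the hypotheses of Proposition~\ref{ptwo}; on such inputs each of $\os$, $\hpi^{v}$, and $\hpi^{h}$ sends weak equivalences to weak equivalences. By Remark~\ref{ramazur}, $\dP$ is genuinely left adjoint to $\ovl{W}N^{v}N^{h}$; since $\ovl{W}$ is naturally weakly equivalent to $\diag$ on bisimplicial sets, the right adjoint $\ovl{W}N^{v}N^{h}$ is naturally weakly equivalent to $\dN=\diag N^{v}N^{h}$. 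Combined with Propositions~\ref{pthree} and~\ref{pfour}, which pin down the target categories as $\TGw$ and $\St$ respectively, this yields a derived adjunction $\dP:\St\bsim\rightleftarrows\TGw\bsim:\dN$ on the localizations.

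To promote this derived adjunction to an equivalence, the triangle identity $\dN(\varepsilon_{\Gss})\circ\eta_{\dN\Gss}=\Id_{\dN\Gss}$ reduces the task to showing that the unit $\eta_{X}:X\to\dN\dP X$ is a weak equivalence for every $X\in\St$: once this holds, the identity forces $\dN(\varepsilon_{\Gss})$ to be a weak equivalence in $\Ss$ (applied to $\dN\Gss\in\St$), and therefore $\varepsilon_{\Gss}$ to be a weak equivalence in $\TGw$ by definition. Both $X$ and $\dN\dP X$ are $2$-types by Propositions~\ref{pthree} and~\ref{pfour}, so it suffices to check that $\eta_{X}$ induces isomorphisms on $\pi_{i}$ for $i\leq 2$. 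This is precisely the content of the algebraic Postnikov description in Remark~\ref{rpost}: the fundamental groupoid $\hpi(\dP X)$ recovers $\pi_{0}X$ and $\pi_{1}X$, while the automorphism group of the identity $\Id_{g}$ in the vertical groupoid $G_{[1]}=G_{2}\toto G_{1}$ recovers the local system $\pi_{2}(X,[g])$.

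The main obstacle lies in this last $\pi_{2}$ identification, where weak globularity plays the decisive role: it guarantees that the pushout construction used in the proof of Proposition~\ref{pfour} produces a Tamsamani $2$-groupoid whose realization faithfully carries the second homotopy group, so that tracing $\pi_{2}$ through $\dN\dP X$ matches $\pi_{2}X$ via $\eta_{X}$. Without weak globularity this comparison would fail, since the pushout replacement of $G_{0\ast}$ by its discretization would no longer preserve the homotopy type. Once the three homotopy-group isomorphisms are verified, the equivalence in~\eqref{eqeqloc} follows.
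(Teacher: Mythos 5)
Your overall skeleton (reduce everything to the unit $X\to\dN\dP X$ being a weak equivalence, then get the counit statement for free from the definition of weak equivalences in $\TGw$) matches the paper's strategy, which invokes Quillen's localization criterion rather than adjunction language. But there is a genuine gap at the decisive step: your verification that $\eta_{X}$ induces isomorphisms on $\pi_{i}$ for $i\leq 2$ rests on Remark~\ref{rpost}, and that remark is not an independent input \wh it explicitly \emph{defers} the identification of $\pi_{2}(\dP X)$ with the local system $\pi_{2}(X,[g])$ to Theorem~\ref{tmodeltt} itself (``In Theorem~\ref{tmodeltt}, we will show that, given $X\in\St$, $\dP X$ represents the $2$-type of $X$\dots''). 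What the remark establishes unconditionally is only the \emph{intrinsic} bottom Postnikov stage of a weakly globular $\Gss$, i.e.\ that $\hpi\Gss$ captures $\pi_{0}$ and $\pi_{1}$ of $\dN\Gss$; it says nothing yet about comparing $\dN\dP X$ with $X$ along the unit. So your argument is circular exactly where the content lies.

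The missing work is the chain of comparisons the paper actually carries out: (1) the augmentation $\os X\to c(X)$ is a levelwise weak equivalence, so $\diag\os X\simeq X$; (2) each $(\os X)_{i}\bull$ is homotopically trivial, so $\diag\os X\simeq\diag N^{v}\hpi^{v}\os X$; (3) the unit $\hpi^{v}\os X\to N^{h}\dP X$ is a \emph{diagonal $2$-equivalence} (a genuine weak equivalence in horizontal degree $0$ because $\Dec X$ is contractible, and a $1$-equivalence in each higher degree because one is taking the nerve of a fundamental groupoid); and (4) Proposition~\ref{pndiag} converts this into a $2$-equivalence $\diag N^{v}\hpi^{v}\os X\to\dN\dP X$. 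Only then, since both $X$ and $\dN\dP X$ are $2$-types (Propositions~\ref{pthree} and \ref{pfour}), does the $2$-equivalence upgrade to a weak equivalence. Your proposal never engages with Proposition~\ref{pndiag} or the levelwise contractibility of $\os X$, which is where the $\pi_{2}$ comparison actually happens; the role you assign to weak globularity and the pushout of Proposition~\ref{pfour} is to ensure the target is a $2$-type, not to effect the comparison.
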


To show this, we shall need the following concept:

\begin{defn}\label{dndiag}
A map \w{f:\Wdd\to\Vdd} of bisimplicial sets is called a
\emph{diagonal $n$-equivalence} if \w{f_{k}^{h}:W_{k}^{h}\to V_{k}^{h}} is an
\ww{(n-k)}-equivalence for each \w[.]{k\leq n}
\end{defn}

%
%
\begin{prop}\label{pndiag}
If \w{f:\Wdd\to\Vdd} is an diagonal $n$-equivalence, then the induced map
\w{\diag f:\diag\Wdd\to\diag\Vdd} is an $n$-equivalence.
\end{prop}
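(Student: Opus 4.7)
My plan is to reinterpret $\diag\Wdd$ as the geometric realization $|W_\bullet^h|$ of the simplicial object $[k]\mapsto W_k^h$ in $\Ss$ (using the standard natural isomorphism $\diag\Wdd \cong \int^{k}W_k^h\times\Delta[k]$), and then proceed by induction on the skeletal filtration. At each stage, $|\sk{k}W_\bullet^h|$ is obtained from $|\sk{k-1}W_\bullet^h|$ via a Reedy-type pushout along the latching inclusion
$$
\bigl(W_k^h \times \partial\Delta[k]\bigr) \cup_{L_k W^h \times \partial\Delta[k]} \bigl(L_k W^h \times \Delta[k]\bigr) \;\hookrightarrow\; W_k^h \times \Delta[k],
$$
where $L_k W^h$ denotes the $k$-th horizontal latching object; the resulting cofiber contribution at stage $k$ is a $k$-fold suspension of $W_k^h/L_k W^h$.

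I would then induct on $k$ to show $|\sk{k}f|$ is an $n$-equivalence, with the base case $k=0$ handed to us by the hypothesis on $f_0^h$. An auxiliary induction shows that the latching map $L_k f^h : L_k W^h \to L_k V^h$ is an $(n{-}k{+}1)$-equivalence (since $L_k W^h$ is an iterated pushout built from the $W_j^h$ for $j<k$, where $f_j^h$ is an $(n{-}j)$-equivalence with $n{-}j\geq n{-}k{+}1$; iterated gluing then supplies the desired connectivity). Combined with the hypothesis that $f_k^h$ is an $(n{-}k)$-equivalence for $k\leq n$, this implies that the quotient $W_k^h/L_k W^h \to V_k^h/L_k V^h$ is an $(n{-}k)$-equivalence, and smashing with the $k$-sphere $\Delta[k]/\partial\Delta[k]$ boosts the connectivity by $k$. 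The gluing lemma in the left-proper model category $\Ss$ (e.g., \cite[Proposition 13.3.9]{PHirM}) then propagates the $n$-equivalence from $|\sk{k-1}f|$ to $|\sk{k}f|$.

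For $k>n$ no further hypothesis is needed: the cells attached at stage $k$ sit in simplicial dimensions $\geq k>n$, so $|\sk{k-1}f|$ being an $n$-equivalence automatically forces $|\sk{k}f|$ to be one as well. Passing to the colimit $\diag f=\colim_k |\sk{k}f|$ along cofibrations yields the conclusion. The main obstacle is the auxiliary induction on the latching maps: one must carefully aggregate the connectivities of the $f_j^h$ for $j<k$ through the iterated pushout presentation of $L_k W^h$, and ensure the suspension-connectivity estimate applies without superfluous simple-connectivity hypotheses—this can be handled cleanly by using the gluing lemma directly on the skeletal pushout squares rather than by reducing to an absolute cofiber computation.
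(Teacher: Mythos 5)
Your opening moves (realizing $\diag\Wdd$ as $|W_{\bullet}^{h}|$, filtering by skeleta, and noting that every bisimplicial set is Reedy cofibrant so that each stage is a pushout along the latching inclusion) are fine, and the auxiliary induction on latching objects can be made to work, since the $n$-equivalences are the weak equivalences of a left Bousfield localization of $\Ss$ and are therefore preserved by homotopy pushouts. The gap is in the main inductive step. The gluing lemma applied to the square obtained by pushing
$$
\bigl(W_k^h\times\partial\Delta[k]\bigr)\cup_{L_kW^h\times\partial\Delta[k]}\bigl(L_kW^h\times\Delta[k]\bigr)\;\hra\;W_k^h\times\Delta[k]
$$
into $|\sk{k-1}W_{\bullet}^{h}|$ only yields the \emph{minimum} of the connectivities of the three vertical maps: since $f_k^h\times\Delta[k]$ is merely an $(n-k)$-equivalence, the gluing lemma gives that $|\sk{k}f|$ is an $(n-k)$-equivalence, not an $n$-equivalence, so your closing suggestion to ``use the gluing lemma directly on the skeletal pushout squares'' does not close the argument. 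The other route you sketch --- that the cofiber contribution is a $k$-fold suspension of an $(n-k)$-equivalence and hence an $n$-equivalence --- concerns the map of homotopy \emph{cofibers} of $|\sk{k-1}W^h_\bullet|\to|\sk{k}W^h_\bullet|$; but a cofiber sequence of (unpointed, non-simply-connected) simplicial sets does not induce a long exact sequence of homotopy groups, so knowing that $|\sk{k-1}f|$ and the map of cofibers are $n$-equivalences does not let you conclude anything about $|\sk{k}f|$ itself. Turning the correct intuition ``cells attached in dimensions $\geq k$ along an $(n-k)$-equivalence do not disturb $\pi_{\leq n}$'' into a proof requires a Blakers--Massey/excision-type input (with attendant $\pi_0$ and $\pi_1$ difficulties, since no connectivity or basepoint assumptions are available), and no off-the-shelf form of the gluing lemma supplies it.

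This is precisely the difficulty the paper's Appendix B is built to avoid. There the only cellular input is the elementary observation that the inclusion of the diagonal $(n+1)$-skeleton induces an isomorphism of CW $(n+1)$-skeleta of the realizations, hence an $n$-equivalence; the hypothesis on the maps $f_k^h$ is then consumed not through a connectivity estimate on cofibers but by surgering the levelwise Postnikov towers of the target (Steps III--IV) to produce a restricted simplicial object $\tZd$ that is levelwise weakly equivalent to the target and agrees with the source on diagonal $(n+1)$-skeleta, after which everything is a zigzag of honest weak equivalences and $n$-equivalences. To salvage your approach you would need either to adopt that strategy or to state and prove the excision lemma your induction tacitly relies on.
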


For the proof, see Appendix B.

\begin{proof}
By \cite[I, \S 1]{QuiH} we must show that for any Kan complex
\w{X\in\St} there is a weak equivalence \w[,]{X\simeq\dN\dP X}
where \w{\dP X=\hpi^{h}\hpi^{v}\os X} (cf.\ \S \ref{dkanp}), and that the
natural map \w{\os X\to N^{v}\hpi^{v}\os X} is a weak equivalence in each
(horizontal) simplicial dimension.
Moreover, since \w{(\os X)_{i}\bull} is homotopically trivial for
each \w[,]{i\geq 0} we have
\w[,]{(\os X)_{i}\bull\simeq N^{v}\hpi^{v}(\os X)_{i}\bull}
so
\begin{myeq}\label{eqdiagwe}
\diag \os X~\simeq~\diag N^{v}\hpi^{v}\os X~.
\end{myeq}
\noindent Furthermore, if \w{c(X)_{i}\bull} is the constant
simplicial set on \w[,]{X_{i}} the augmentation map of bisimplicial
sets \w{\os X\to c(X)} is a weak equivalence in each (horizontal)
simplicial dimension, so it induces a weak equivalence
\begin{myeq}\label{eqdiagcwe}
\diag\os X~\simeq~\diag c(X)~=~X~.
\end{myeq}

Now consider the unit map
$$
\hpi^{v}\os X~\to~N^{h}\hpi^{h}\hpi^{v}\os X~=~N^{h}\dP X~.
$$
for the comonad \w[.]{N\hpi:\Ss\to\Ss}

This map is a diagonal $2$-equivalence. In fact,
\w{(N^{v}N^{h}\dP X)^{h}_{0}=(N^{h}\dP X)^{h}_{0}=N\hpi\Dec X} and
\w[.]{(N^{v}\hpi^{v}\os X)_{0}^{h}=\Dec X} Moreover,
the map \w{\Dec X\to N\hpi\Dec X} is a weak equivalence, hence in
particular a $2$-weak equivalence, since the Kan complex \w{\Dec X} is
simplicially homotopy equivalent to a constant simplicial set
(cf.\ \cite[I, \S 6]{GJarS}). Further, for each \w[,]{i>0} the map:
$$
(N^{v}\hpi^{v}\os X)_{i}^{h}~\to~(N^{h}\dP X)_{i}^{h}~=~
(N^{h}\hpi^{h}\hpi^{v}\os X)_{i}^{h}
$$
is a $1$-equivalence, proving the claim.

It follows from Proposition \ref{pndiag} that there is a $2$-equivalence
\begin{myeq}\label{eqdiagnp}
\diag N^{v}\hpi^{v}\os X~\simeq~\dN\dP X.
\end{myeq}
\noindent Thus \wref[,]{eqdiagwe} \wref[,]{eqdiagcwe} and
\wref{eqdiagnp} imply that there is a $2$-equivalence
\begin{myeq}\label{eqnpo}
X~\simeq~\dN\dP X~.
\end{myeq}
Since, by hypothesis, $X$ is in \w[,]{\St} and by Proposition \ref{pfour}
\w{\dN\hP\os X} is in \w[,]{\St} it follows that
\wref{eqnpo} is a weak equivalence.

Finally, given \w[,]{\Gss\in\TGw} by \wref{eqdiagnp} there is a weak
equivalence
$$
\dN\G\simeq\dN\dP\dN\Gss
$$
This shows that \w{\Gss} and \w{\dP\dN\Gss} are weakly
equivalent. This completes the proof that the functors in \wref{eqeqloc} are
equivalences of categories.
\end{proof}

%
%
\section{Two-track categories}\label{cttrack}

We are now in a position to define the main subject of this paper:

\begin{defn}\label{dttrack}
A \emph{two-track category} with object set \w{\OO} is a
\ww{\TGwO}-category, where the enrichment is with respect to the
cartesian monoidal structure. We denote the category
of all two-track categories by \w[.]{\TTra}
\end{defn}

\begin{prop}\label{pfive}
The functor \w{\dP:\SOC\to\TTraO} associates to a fibrant
\ww{\SO}-category \w{\Xd} a two-track category \w{\G=\dP\Xd} with
object set $\OO$.
\end{prop}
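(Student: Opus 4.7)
The plan is to reduce this statement to the fact that $\dP$ preserves finite products when restricted to fibrant simplicial sets, and then apply the standard principle that a finite-product-preserving functor between cartesian monoidal categories induces a functor on categories enriched in the cartesian structure, with the same object set.

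First, I would establish that on fibrant inputs the functor $\dP = \hP \circ \os = \hpi^{h}\circ\hpi^{v}\circ\os$ (cf.\ Corollary \ref{cladoub}) commutes with finite products. The component $\os:\Ss\to s\Ss$ is induced by precomposition with the ordinal sum $\orr:\Delta^{2}\to\Delta$, and since limits in presheaf categories are computed objectwise, $\os$ preserves all limits; in particular $\os(X\times Y)\cong\os X\times\os Y$ levelwise. Next, $\hpi^{v}$ applied to a bisimplicial set $\Wdd$ whose vertical slices are Kan complexes preserves products, because the ordinary fundamental groupoid functor $\hpi:\Scf\to\Gpd$ preserves products of Kan complexes (as already exploited earlier in the excerpt in defining $\N:\GOC\to\SOC$). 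The same argument applied horizontally shows $\hpi^{h}$ preserves products on simplicial groupoids of the form arising in the image of $\hpi^{v}\circ\os$ (which satisfy the fibrancy hypotheses of Proposition \ref{ptwo}, by the proof of Proposition \ref{pthree}). Composing the three steps gives $\dP(X\times Y)\cong \dP X\times \dP Y$ naturally for Kan complexes $X,Y$.

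Now let $\Xd$ be a fibrant $\SO$-category, so that each mapping space $\map_{\Xd}(a,b)$ is a Kan complex. I define $\G=\dP\Xd$ to have object set $\OO=\Obj\Xd$ and mapping objects $\G(a,b):=\dP\map_{\Xd}(a,b)\in\TGw$ (well-defined and two-typical by Proposition \ref{pthree}). The composition maps of $\G$ are obtained by applying $\dP$ to the composition of $\Xd$ and using the product-preservation isomorphism:
\begin{equation*}
\G(b,c)\times\G(a,b)~\xra{\cong}~\dP\bigl(\map_{\Xd}(b,c)\times\map_{\Xd}(a,b)\bigr)~\xra{\dP(\mu)}~\dP\map_{\Xd}(a,c)~=~\G(a,c).
\end{equation*}
Unit morphisms are obtained from the units of $\Xd$ by applying $\dP$ to the map from the terminal object (which is preserved since it is the empty product). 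Associativity and unit axioms for $\G$ follow by applying the functor $\dP$ to the associativity and unit diagrams in $\Xd$, using naturality of the product-preservation isomorphism to identify $\dP(X\times Y\times Z)$ with $\dP X\times \dP Y\times \dP Z$. Thus $\G$ is a $\TGwO$-category, i.e., an object of $\TTra$; functoriality of the assignment $\Xd\mapsto\dP\Xd$ is immediate from functoriality of $\dP$ on each mapping space.

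The only non-routine point is the product-preservation of $\hpi^{h}$ and $\hpi^{v}$ on the relevant bisimplicial sets; here one must use that the fibrancy hypotheses of Propositions \ref{pone} and \ref{ptwo} are stable under finite cartesian products (which holds because $\csk{2}$-fibrancy and the $\csk{2}$-fibration conditions are defined by right lifting properties against finitely many horn inclusions and so pass to products), and that $\hpi$ commutes with products of Kan complexes. Once this is in hand the rest is a purely formal transport of enrichment along a monoidal functor of cartesian monoidal categories.
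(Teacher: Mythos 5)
Your argument is essentially the paper's own proof: the paper likewise observes that $\os$ preserves products, that the fundamental groupoid functor preserves products (with products in functor categories computed pointwise), concludes via \wref{eqhvp} that $\dP=\hP\circ\os$ preserves products, and then extends it to \w{\SOC} by applying it to each mapping space. Your additional remarks on the stability of the fibrancy hypotheses under products and the appeal to Proposition \ref{pthree} for two-typicality of the mapping objects just make explicit what the paper leaves implicit.
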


\begin{proof}
\quad From its definition, it is straightforward that \w{\os} preserves
products. Since the fundamental groupoid functor preserves products,
and products in functor categories are computed pointwise, it follows
from \wref{eqhvp} that \w{\dP=\hP\circ\os:\Ss\to\TGpd} preserves
products. It thus extends to \w{\SOC} (applied to each mapping space).
\end{proof}

Similarly, the functor \w{\dN=\diag N^{v}N^{h}:\TGpd\to\Ss} preserves
products and therefore induces a functor on two-track categories
which we also denote by \w[.]{\dN:\TTraO\to\PSOC{2}}
We say that a morphism $f$ in \w{\TTraO} is a weak equivalence if
\w{\dN f} is a weak equivalence of \ww{\SO}-categories,
and we denote by \w{\TTraO\bsim} and \w[,]{\PSOC{2}\bsim}
respectively the localizations with respect to weak equivalences (so
that \w{\PSOC{2}\bsim} is actually the full subcategory
\w{\ho\PSOC{2}} of \w{\ho\SOC} with respect to the model category
structure described in \S \ref{ssoc}). We then deduce from Theorem
\ref{tmodeltt}:

%
%
\begin{cor}\label{cmodeltt}
The functors
$$
\dP:\PSOC{2}\to\TTraO\hspace*{7mm}\text{and}\hspace*{7mm}\dN:\TTraO\to\PSOC{2}
$$
induce equivalence of categories between \w{\PSOC{2}\bsim} and
\w[.]{\TTraO\bsim}
\end{cor}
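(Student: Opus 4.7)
The plan is to lift the object-level equivalence of Theorem \ref{tmodeltt} to the enriched setting by checking that the relevant natural weak equivalences are applied mapping-space by mapping-space, and that they respect composition so as to give morphisms in the enriched categories $\SOC$ and $\TTraO$.

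First, recall that a morphism $f$ in $\SOC$ is a weak equivalence iff $f(a,b):\Xd(a,b)\to\Yd(a,b)$ is a weak equivalence of simplicial sets for all $a,b\in\OO$, and that by definition a morphism $f$ in $\TTraO$ is a weak equivalence iff $\dN f$ is one in $\SOC$; since $\dN$ is applied mapping-objectwise, this too reduces to a mapping-object condition. Proposition \ref{pfive} and the remark following it ensure that both $\dP$ and $\dN$ are well-defined product-preserving functors, so they induce functors on the localized categories $\PSOC{2}\bsim$ and $\TTraO\bsim$.

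Next, I would produce the unit and counit of the would-be equivalence. For a fibrant $\Xd\in\PSOC{2}$, the natural $2$-equivalence $X\simeq\dN\dP X$ of \wref{eqnpo} constructed in the proof of Theorem \ref{tmodeltt} is natural in $X$, so applying it to each mapping space $\Xd(a,b)$ gives a map of $\SO$-categories $\Xd\to\dN\dP\Xd$ which is an objectwise weak equivalence. The key point to check is that this really is a morphism in $\SOC$, i.e.\ that the natural transformation is compatible with composition; this is automatic because the construction $X\mapsto\dN\dP X$ is functorial and the weak equivalence $X\simeq\dN\dP X$ is natural, so it commutes with the composition maps $\Xd(b,c)\times\Xd(a,b)\to\Xd(a,c)$ (which, since $\dP$ and $\dN$ preserve products by Proposition \ref{pfive}, are sent to corresponding composition maps in $\dN\dP\Xd$). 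Similarly, the weak equivalence $\Gss\simeq\dP\dN\Gss$ from Theorem \ref{tmodeltt} is natural and extends objectwise to give a weak equivalence $\G\simeq\dP\dN\G$ in $\TTraO$ for each $\G\in\TTraO$.

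Finally, since weak equivalences in both $\SOC$ and $\TTraO$ are detected on each mapping object, these objectwise $2$-equivalences (respectively weak equivalences) become isomorphisms after localizing at weak equivalences. Hence the induced functors on $\PSOC{2}\bsim$ and $\TTraO\bsim$ are quasi-inverse to each other, giving the desired equivalence of categories. The only subtlety is the compatibility with composition in the enriched setting, which, as noted, follows from the naturality in Theorem \ref{tmodeltt} together with the fact that $\dP$ and $\dN$ preserve products.
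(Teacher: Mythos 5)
Your proposal is correct and follows essentially the same route as the paper, which simply deduces the corollary from Theorem \ref{tmodeltt} using the fact that \w{\dP} and \w{\dN} preserve products and that weak equivalences in \w{\SOC} and \w{\TTraO} are detected on each mapping object; you have merely written out the routine details the paper leaves implicit. The only minor imprecision is that the equivalence \w{X\simeq\dN\dP X} of \wref{eqnpo} is really a zigzag of natural weak equivalences rather than a single map, but since each leg of the zigzag is natural and product-preserving it still lifts mapping-objectwise and becomes an isomorphism in the localization, so the argument stands.
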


\begin{mysubsection}{The homotopy track category of a two-track category}
\label{shtc}
Because a two-track category $\G$ is enriched in two-typical double
groupoids, which are in particular symmetric, we can apply \wref{eqwkglgpd}
to each mapping object to obtain a concise description of $\G$ in the form
\mydiagram[\label{eqtwotrack}]{
\G_{[1]}\ar@<0.7ex>[rr]^{d_{0}} \ar@<-.7ex>[rr]_{d_{1}}  &&
\G_{[0]}\ar@/_1.7pc/[ll]_{s}
}
\noindent in which \w{\G_{[1]}=(\G_{2}\toto\G_{1})} and
\w{\G_{[0]}=(\G_{1}\toto\G_{0})} are track categories, as in
\wref[.]{eqwkglgp}
We call a morphism $\xi$ of \w{\G_{1}} a \emph{$1$-track}, and write
\w[,]{\xi:f\Ra g} where \w{f=d_{0}\xi} and \w[.]{g=d_{1}\xi} Similarly,
a morphism $\alpha$ of \w{\G_{2}} is called a \emph{$2$-track}, and we write
\w[.]{\alpha:d_{0}\alpha\Ra d_{1}\alpha}

The additional data mentioned in Remark \ref{rpost} is available also
here: thus the \emph{homotopy track category} of a two-track category $\G$,
written \w{\hPi\G} or \w[,]{\ho\G} is obtained
by taking the coequalizer of the maps of track categories
\w[,]{\G_{[1]}\toto\G_{[0]}} or equivalently, by applying the fundamental
groupoid functor \w{\hpi} to each two-typical mapping object
\w{\G(a,b)} \wb[.]{a,b\in\G_{0}} This track category \w{\D:=\hPi\G}
itself has an associated homotopy category \w{\ho\D=\Pi_{0}\D}
(see \S \ref{dtrack}), which we denote simply by \w[.]{\Pi_{0}\G}

Moreover, we have a certain abelian track category \w{\Pi_{2}\G} over
\w[,]{\hPi\G} which will be described in the next section (see \S
\ref{egnatmod}), which together with \w{\hPi\G} (and \w[)]{\Pi_{0}\G}
can be thought of as the Postnikov system for $\G$. We summarize this
situation by the following diagram:
\mydiagram[\label{eqtwotrackext}]{
\Pi_{2}\G \ar[rr] && \G_{[1]}\ar@<0.7ex>[rr]^{d_{0}} \ar@<-.7ex>[rr]_{d_{1}}  &&
\G_{[0]}\ar[rr]&& \hPi\G
}
\noindent In Section \ref{cttetc} we will further analyze the
structure of \wref{eqtwotrackext} as a ``two-track extension'' of
\w{\hPi\G} by \w[.]{\Pi_{2}\G}
\end{mysubsection}

\begin{mysubsection}{A Gray category model of two-track categories}\label{sgcmt}
We wish to point out some connections with other higher-categorical structures.
These are not needed for the subsequent sections.

There is a functor
\w{L:\TGw\to\Tam} defined as follows: in \w[,]{\N^{h}\Gss} replace the groupoid
\w{G_{0\ast}} by the discrete groupoid \w[,]{G_{0\ast}^{d}} with
\w{d_{i}:G_{1\ast}\to G_{0\ast}} \wb{i=0,1} by \w[,]{\gamma d_{i}}  and
\w{s_{0}:G_{0\ast}\to G_{1\ast}} by \w{s_{0}\sigma} where
\w{\sigma:G_{0\ast}^{d}\to G_{0\ast}} is the natural section.
The verification that the resulting simplicial
groupoid \w{L(\Gss)} is a Tamsamani weak $2$-groupoid is as in the proof of
Proposition \ref{pfour}. In addition, there is a natural weak equivalence
\w{L(\Gss)\to T(\Gss)} (see proof of Proposition \ref{pfour}),
so \w{L(\Gss)} is weakly equivalent to \w[.]{N^{h}(\Gss)} This functor $L$
clearly preserves products.

Recall from \cite{TamsN,LPaolT} that there is a product-preserving functor
\w{M:\Tam\to\BGpd} to the category of bigroupoids. Furthermore, there is a
strictification functor \w{\Str:\BGpd\to\hy{2}{\Gpd}} to the category of strict
$2$-groupoids which is monoidal with respect to the cartesian product in
\w{\BGpd} and the Gray tensor product in \w{\hy{2}{\Gpd}} (see \cite{GPStreeA}).

All three functors $L$, $M$, and \w{\Str} preserve homotopy types (of the
classifying spaces). The composite
\w{\St\circ M\circ L:\TGw\to\hy{2}{\Gpd}} is also monoidal,
and thus extends to a functor \w[,]{K:\TTraO\to\hy{(\hy{2}{\Gpd},\OO)}{\Cat}}
where the target is the full subcategory of Gray categories whose $2$- and
$3$-cells are invertible.

The Street nerve functor \w{\N:\hy{2}{\Gpd}\to\Ss} (cf.\
\cite{StreeA}, and see \S \ref{sntc} below) is monoidal with respect to
the Gray tensor product in \w{\hy{2}{\Gpd}} and the cartesian product in $\Ss$
(see \cite{VeriCS}), and is weakly equivalent to the diagonal nerve. Therefore,
it extends to a functor \w[,]{\N:\hy{(\hy{2}{\Gpd},\OO)}{\Cat}\to\SOC} and we
have:
\end{mysubsection}

%
%
\begin{prop}\label{pgray}
The functors
$$
\St\circ M\circ
L:\PSOC{2}\to\hy{(\hy{2}{\Gpd},\OO)}{\Cat}\hspace*{3mm}\text{and}\hspace*{3mm}
\N:\hy{(\hy{2}{\Gpd},\OO)}{\Cat}\to\PSOC{2}
$$
induce equivalence of categories between \w{\PSOC{2}\bsim} and
\w[.]{\hy{(\hy{2}{\Gpd},\OO)}{\Cat}\bsim}
\end{prop}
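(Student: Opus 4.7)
The plan is to reduce this statement to Corollary \ref{cmodeltt}, which already provides an equivalence between $\PSOC{2}\bsim$ and $\TTraO\bsim$ via $\dP$ and $\dN$. Thus it suffices to exhibit the composite $K = \Str \circ M \circ L$ and the Street nerve $\N$ as inducing an equivalence between $\TTraO\bsim$ and $\hy{(\hy{2}{\Gpd},\OO)}{\Cat}\bsim$, compatibly with the nerves $\dN$ and $\N$.

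First I would verify that $K$ is well-defined at the level of enriched categories. Here the delicate point is that enrichment in $\TGw$ and in $\Tam$ and in $\BGpd$ is with respect to cartesian product, while enrichment in $\hy{2}{\Gpd}$ is with respect to the Gray tensor product. Since $L$ and $M$ preserve products (as recalled in \S \ref{sgcmt}), their composite $M \circ L$ promotes an object of $\TTraO$ to an object of $\hy{(\BGpd, \OO)}{\Cat}$. The strictification functor $\Str$ is monoidal from cartesian product to the Gray tensor product, so applying it to the mapping objects yields a Gray category with invertible $2$- and $3$-cells, that is, an object of $\hy{(\hy{2}{\Gpd},\OO)}{\Cat}$, as required. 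This verifies that $K : \TTraO \to \hy{(\hy{2}{\Gpd},\OO)}{\Cat}$ is a functor.

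Next I would show that $K$ and $\N$ preserve weak equivalences, and that there is a natural zigzag of weak equivalences $\dN \simeq \N \circ K$ of functors $\TTraO \to \SOC$. For a single two-typical double groupoid $\Gss$, the statement in \S \ref{sgcmt} gives a natural weak equivalence $L(\Gss) \to T(\Gss)$, where $T(\Gss)$ is the Tamsamani weak $2$-groupoid constructed in the proof of Proposition \ref{pfour}; hence the realization $\diag \ovl{\N} L(\Gss)$ is weakly equivalent to $\dN \Gss$ by \wref{eqdiagequiv}. The functors $M$ and $\Str$ preserve homotopy types of the classifying spaces, and the Street nerve is weakly equivalent to the diagonal nerve (cf.\ \cite{VeriCS}), so chaining these natural weak equivalences produces $\N K \Gss \simeq \dN \Gss$. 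Since all the constituent functors preserve products, this passes to enriched categories levelwise on mapping objects, giving $\N K \G \simeq \dN \G$ naturally in $\G \in \TTraO$.

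Finally, I would combine these ingredients with Corollary \ref{cmodeltt}. For $\Xd \in \PSOC{2}$, the composite $\N K \dP \Xd \simeq \dN \dP \Xd \simeq \Xd$, showing that $\N \circ K \circ \dP$ is naturally weakly equivalent to the identity. Conversely, for $\mathcal{Z} \in \hy{(\hy{2}{\Gpd},\OO)}{\Cat}$, the $\PSOC{2}$-object $\N \mathcal{Z}$ has $\dP \N \mathcal{Z} \in \TTraO$ with $\N K \dP \N \mathcal{Z} \simeq \dN \dP \N \mathcal{Z} \simeq \N \mathcal{Z}$, and since the weak equivalences in $\hy{(\hy{2}{\Gpd},\OO)}{\Cat}$ are by definition detected by $\N$, this yields $K \dP \N \mathcal{Z} \simeq \mathcal{Z}$ in the localization. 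Thus $K \circ \dP$ and $\N$ are inverse equivalences on the localized categories, as claimed.

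The main obstacle I expect is the careful bookkeeping of monoidal structures in the third functor $\Str$, and the verification that the Street nerve and the diagonal nerve agree up to natural weak equivalence at the level of enriched categories (not just on individual objects). Once these naturality issues are settled, the rest is formal manipulation of localizations.
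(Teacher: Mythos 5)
Your proposal is correct and follows essentially the same route the paper takes: the paper gives no separate proof of Proposition \ref{pgray}, but the discussion in \S \ref{sgcmt} (monoidality of $L$, $M$, $\Str$ and of the Street nerve, preservation of homotopy types, and the comparison of the Street nerve with the diagonal nerve) is exactly the argument you flesh out, reduced as you do to Corollary \ref{cmodeltt}. Your reading of the domain of $\St\circ M\circ L$ as implicitly precomposed with $\dP$, and your flagging of the naturality/monoidal-bookkeeping issues, are both consistent with the paper's intent.
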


Here weak equivalences in \w{\hy{(\hy{2}{\Gpd},\OO)}{\Cat}} are
defined by means of $\N$.
%
%
\section{Coefficient systems on track categories}
\label{ccstc}

In order to define a cohomology theory for track categories, we first
have to describe the possible coefficient systems. These have already
been identified, in Quillen's approach, as the abelian group objects
in the appropriate over-category (cf.\ \cite[\S 2]{QuiC}):

\begin{defn}\label{dmodule}
A \emph{module} over a track category
\w{\D=(\D_{1}\toto\D_{0})} is an abelian group object
\w{\M=(\M_{1}\toto\M_{0})} in the over-category \w[.]{\GDC/\D}
\end{defn}

In order to make this more explicit, note that by definition a module
$\M$ over $\D$ comes equipped with a map of track categories (in fact,
of \ww{\GD}-categories) \w[,]{p:\M\to\D} which has a section
\w{s:\D\to\M} (with \w[).]{p\circ s =\Id}

Applying the ``semi-discretization'' functor \w{(-)\ud}
of \S \ref{dtrack} yields a map of semi-discrete track categories
\w[,]{p\ud:\M\ud\to\D\ud} and we denote by
\w{\K:=\Ker(p\ud)} the kernel of this map (as a disjoint union of
homomorphisms of groups), so that \w[.]{\K\in\GdDC} In fact, since
\w[,]{\M\in(\GDC/\D)_{\ab}} we see that \w[.]{\K\in(\GdDC)_{\ab}}

\begin{remark}\label{rhorcomp}
Note that in any track category $\D$, the categorical (horizontal)
composition \w{\ch:\D_{i}(a,b)\times\D_{i}(b,c)\to\D_{i}(a,c)} \wb{i=0,1}
is determined by the interchange law:
\begin{myeq}\label{eqinterch}
\xi\ch\zeta~=~[(f_{0})_{\ast}\zeta]\cv[g_{1}^{\ast}\xi]~=~
[g_{0}^{\ast}\xi]\cv[(f_{1})_{\ast}\zeta]
\end{myeq}
\noindent for tracks \w{\zeta:g_{0}\Ra g_{1}:a\to b} and
\w[.]{\xi:f_{0}\Ra f_{1}:b\to c}
Here \w[,]{(f_{0})_{\ast}\zeta=s_{0}(f_{0})\ch\zeta} where the
degeneracy \w{s_{0}:\D_{0}\to\D_{1}} embeds \w{\D_{0}} in \w{\D_{1}}
by mapping \w{f:a\to b} to the identity track \w{\Id:f\Ra f} in
\w[.]{\D_{1}(a,b)}
\end{remark}

\begin{defn}\label{dnatsyst}
A \emph{natural system} (in abelian groups) on a track category
\w{\D=(\D_{1}\toto\D_{0})} is a collection $\K$ of abelian groups \w{K_{f}}
indexed by \w[,]{f\in\D_{0}(a,b)} equipped with:
\begin{enumerate}
\renewcommand{\labelenumi}{(\alph{enumi})\ }
\item For every composable sequence
\w{a\xra{g}b\xra{f}c} in \w[,]{\D_{0}} \emph{pre- and
  post-composition} homomorphisms \w{f_{\ast}:K_{g}\to K_{fg}}  and
\w[,]{g^{\ast}:K_{f}\to K_{fg}} with \w{(fg)^{\ast}=g^{\ast}f^{\ast}}
and \w[.]{(fg)_{\ast}=f_{\ast}g_{\ast}}
\item A homomorphism \w{\xi_{\ast}:K_{f_{0}}\to K_{f_{1}}} for every
\w[,]{\xi\in\D_{1}(f_{0},f_{1})} such that for any
\w{\zeta\in\D_{1}(f_{1},f_{2})} we have
\w[.]{(\xi\cv\zeta)_{\ast}=\zeta_{\ast}\,\xi_{\ast}}

In particular, this reduces to an action of the group \w{\D_{1}(f,f)} on
\w{K_{f}} when \w[.]{f=f_{0}=f_{1}}
\item The two structures commute \wh that is, if we use
  \wref{eqinterch} to define \w{\ch:K_{f}\times K_{g}\to K_{fg}} for
  any \w{a\xra{g}b\xra{f}c} in \w[,]{\D_{0}} then:
\begin{myeq}\label{eqhorver}
(\xi_{\ast}\alpha)\ch(\zeta_{\ast}\beta)~=~
(\xi\ch\zeta)_{\ast}(\alpha\ch\beta)
\end{myeq}
\noindent for \w{\zeta:g_{0}\Ra g_{1}} in \w[,]{\D_{1}(a,b)}
\w{\xi:f_{0}\Ra f_{1}} in \w[,]{\D_{1}(b,c)} \w[,]{\alpha\in K_{f_{0}}}
  and \w[.]{\beta\in K_{g_{0}}}
\end{enumerate}
\end{defn}

%
%
\begin{prop}\label{pnatmod}
For a fixed track category \w[,]{\D=(\D_{1}\toto\D_{0})} there is a
bijective correspondence between modules $\M$ over $\D$ and natural systems
$\K$ on $\D$ (up to isomorphism), defined by
\w[,]{\M\mapsto\Ker(p\ud)} where \w{p\ud} is as in \S \ref{dmodule}.
\end{prop}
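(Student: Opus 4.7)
The plan is to give explicit constructions in both directions and check they are mutually inverse.

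Given a module $p:\M\to\D$ with zero-section $s:\D\to\M$, define $\K:=\Ker(p\ud)$ by letting $K_f$ be the abelian group of self-tracks $f\Ra f$ in $\M$ that project to the identity self-track $\Id_f$ in $\D$, with group structure inherited from the fiberwise abelian group object structure of $\M$ over $\D$. Pre- and post-composition in $\K$ are defined using horizontal composition in $\M$ with the identity $2$-cells $s(\Id_f)$; the action $\xi_\ast:K_f\to K_g$ for a track $\xi:f\Ra g$ in $\D$ is given by conjugation $\alpha\mapsto s(\xi)^{-1}\cv\alpha\cv s(\xi)$, which lies in $K_g$ since its projection is $\xi^{-1}\cv\Id_f\cv\xi=\Id_g$. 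Axioms (a)--(c) of Definition \ref{dnatsyst} follow from the associativity and functoriality of the operations in $\M$, with \wref{eqhorver} being a direct consequence of the interchange law \wref{eqinterch} applied inside $\M$.

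For the converse, given a natural system $\K$ on $\D$, construct a module $\M\in\GDC/\D$ with $\M_0:=\D_0$ and, for each $a,b\in\OO$, a groupoid $\M(a,b)$ having the same objects as $\D(a,b)$ and morphism sets
\[
\M(a,b)(f,g) ~:=~ \{(\xi,\alpha) : \xi\in\D(a,b)(f,g),\ \alpha\in K_f\},
\]
with vertical composition $(\xi,\alpha)\cv(\zeta,\beta):=(\xi\cv\zeta,\,\alpha+\xi_\ast^{-1}\beta)$ and horizontal composition $(\xi,\alpha)\ch(\zeta,\beta):=(\xi\ch\zeta,\,g_0^\ast\alpha+(f_1)_\ast\beta)$ for composable $\xi:f_0\Ra f_1$ and $\zeta:g_0\Ra g_1$. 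The projection $p(\xi,\alpha):=\xi$ with section $s(\xi):=(\xi,0)$ presents $\M$ as an object of $\GDC/\D$, and the fiberwise addition $(\xi,\alpha)+_\D(\xi,\beta):=(\xi,\alpha+\beta)$ gives the abelian group object structure.

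Finally, the two constructions are mutually inverse: the roundtrip $\K\to\M\to\Ker(p\ud)$ recovers $\K$ tautologically, while for a given module $\M'$ with section $s'$, every $2$-cell of $\M'_1$ lying over a track $\xi\in\D_1$ has a unique expression $s'(\xi)\cv\tilde\alpha$ with $\tilde\alpha\in K_{t(\xi)}$ (the torsor property provided by the section and abelian-group-object structure), so the map $(\xi,\alpha)\mapsto s'(\xi)\cv\tilde\alpha$ is a well-defined isomorphism $\M\cong\M'$ in $\GDC/\D$. The main obstacle is verifying that the prescribed composition formulas give a valid track-category structure on the constructed $\M$; most axioms reduce, after unwinding, to (a)--(c) of Definition \ref{dnatsyst}, with \wref{eqhorver} in particular encoding precisely the interchange law in $\M$. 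The argument parallels the classical Baues-Wirsching bijection between natural systems on a small category and modules over it, enhanced by one categorical level to accommodate the tracks.
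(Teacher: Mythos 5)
Your construction is essentially the same as the paper's: you extract $\K=\Ker(p\ud)$, use the section $s$ to split each hom-set of $\M$ as $\D_{1}(f_{0},f_{1})\times K_{f_{0}}$, and write down the same vertical composition formula $(\xi,\alpha)\cv(\zeta,\beta)=(\xi\cv\zeta,\alpha+\xi_{\ast}^{-1}\beta)$; you are merely more explicit than the paper about the inverse construction and the check that the two assignments are mutually inverse. One correction: your horizontal composition formula $(\xi,\alpha)\ch(\zeta,\beta)=(\xi\ch\zeta,\,g_{0}^{\ast}\alpha+(f_{1})_{\ast}\beta)$ is ill-typed, since $(f_{1})_{\ast}\beta$ lies in $K_{f_{1}g_{0}}$ rather than $K_{f_{0}g_{0}}$; with your convention $\alpha\in K_{f}$ for the \emph{source} $f$, the interchange law forces $(\xi,\alpha)\ch(\zeta,\beta)=(\xi\ch\zeta,\,g_{0}^{\ast}\alpha+(f_{0})_{\ast}\beta)$. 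With that index fixed the argument goes through exactly as in the paper.
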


\begin{proof}
Since \w{\K:=\Ker(p\ud)} is an abelian group object in
\w[,]{\GdDC} for every \w[,]{a,b\in\OO:=\Obj(\D_{0})} \w{\K(a,b)} is a
abelian semi-discrete groupoid, so that it is a disjoint union of
abelian groups \w[,]{K_{f}} one for each \w[.]{f\in\D_{0}(a,b)}
Moreover, the maps of semi-discrete track categories
\w{K\to\M\ud\xra{p\ud}\D\ud} and \w{s\ud} induce a split exact
sequence of groups
\mydiagram[\label{eqsplit}]{
0\ar[r] & K_{f}\ar[r]^<<<<<{i_{f}} & \M_{1}(f,f)\ar[r]^{p_{f}}~
& \D_{1}(f,f)\ar[r] \ar@/_1.5pc/_{s}[l] & 1
}
\noindent for each \w[.]{f\in\D_{1}(a,b)} Thus if we write
\w{M_{f}:=\M_{1}(f,f)} and \w{D_{f}:=\D_{1}(f,f)} for the two
automorphism groups, we have a semi-direct product of groups:
\w[.]{M_{f}\cong K_{f}\ltimes D_{f}}

The horizontal composition in \w{\M_{1}} induces an (associative)
composition in $\K$, which is completely determined by the homomorphisms
\w{\ch:K_{f}\times K_{g}\to K_{gf}} \wh that is, by
\w{f_{\ast}} and \w[.]{g{\ast}}

If we fix \w[,]{a,b\in\OO} then because \w{G:=\M(a,b)} is a groupoid,
any \w{\psi\in G(f_{0},f_{1})} induces an isomorphism
\w{\psi_{\ast}:G(f_{0},f_{0})\cong G(f_{0},f_{1})}
for every \w[.]{f_{0},f_{1}\in\D_{0}(a,b)=\M_{0}(a,b)} In particular,
this holds for \w[,]{\psi=s(\xi)} for any
\w[.]{\xi\in \D_{1}(f_{0},f_{1})}

Combining this with \wref[,]{eqsplit} we see that
\begin{myeq}\label{eqdisun}
\sigma:~\coprod_{\D_{1}(f_{0},f_{1})}~K_{f_{0}}~\cong~G(f_{0},f_{1})
\end{myeq}
\noindent (as sets), where the isomorphism $\sigma$ maps
\w{(\xi,\alpha)} in \w{\D_{1}(f_{0},f_{1})\times K_{f_{0}}} to
\w{s(\xi)_{\ast}(i_{f_{0}}(\alpha))} in \w[,]{\M_{1}(f_{0},f_{1})}
which we shall denote simply by \w[.]{\xi_{\ast}(\alpha)}

Because \w{p\rest{G}} is a map of groupoids, the vertical composition
in $G$ respects the splitting \wref{eqdisun} in the sense that
$$
(\xi,\alpha)\cv(\xi',\alpha')=(\xi\cv\xi',\alpha+\xi_{\ast}^{-1}(\alpha'))
$$
\noindent for \w{f_{0}\xRa{\xi}f_{1}\xRa{\xi'}f_{2}} in
\w[,]{\D_{1}(a,b)} \w[,]{\alpha\in K_{f_{0}}}
and \w[.]{\alpha'\in K_{f_{1}}}

Finally, because \w{i:\K\to\M} and \w{s:\D\to\M} are maps of track
categories (in \w[),]{\GDC} we see that $\sigma$ of \wref{eqdisun}
respects the horizontal compostions \w[,]{\ch} so \wref{eqhorver}
holds. Moreover, it is clear from this description that one can
reconstruct the module $\M$ from the natural system $\K$.
\end{proof}

\begin{example}\label{egnatmod}
The main example of a natural system is obtained as follows:

Let \w{\Xd} be an \ww{\SO}-category \wh or equivalently, a simplicial
category in \w[,]{s\Cat} with fixed object sset $\OO$, and let
\w{\D:=\hpi\Xd} the track category obtained by applying the
fundamental groupoid functor to each simplicial mapping space
\w{\Xd(a,b)} \wb{a,b\in\OO} of \w{\Xd} (assuming these are Kan
complexes), so \w[.]{\D_{0}=X_{0}} Fix some \w[.]{n\geq 2}

For each \w[,]{f\in\D_{0}(a,b)} let \w[.]{K_{f}:=\pi_{n}(\Xd(a,b);f)}
Since both \w{\hpi} and \w{\pi_{n}} commute with products in $\Ss$, the
composition structure maps \w{\circ:\Xd(a,b)\times \Xd(b,c)\to\Xd(a,c)}
of the simplicially enriched category \w{\Xd} induce
\w[,]{\ch:K_{f}\times K_{g}\to K_{gf}} as well as
$$
\hpi(\Xd(a,b);f_{0},f_{1})\times \hpi(\Xd(b,c);g_{0},g_{1})~\to~
\hpi(\Xd(a,c);g_{0}\circ f_{0},g_{1}\circ f_{1})~.
$$
The action of the fundamental groupoid on the higher homotopy groups
defines the isomorphism \w{\xi_{\ast}:K_{f_{0}}\to K_{f_{1}}} for each
\w[,]{\xi\in\hpi(\Xd(a,b);f_{0},f_{1})} and satisfies
\wref{eqhorver} by naturality of this action (see \cite[\S 1.13]{BJTurnR}).
\end{example}

Yet a third way of looking at modules over track categories is the
following, in the spirit of \cite[\S 1]{BWirC}:

\begin{defn}\label{dfaccat}
For any track category $\D$, \w[,]{\Fac\D} the \emph{category of factorizations}
of $\D$  having as objects the maps of \w[,]{\D_{0}} and as morphisms
\w{(h,k,\xi):f\to g} ``homotopy commuting'' squares:
\mydiagram[\label{eqfaccat}]{
a \ar[rr]^{f} & \ar@{=>}[d]_{\xi} &c \ar[d]^{k}\\
b \ar[u]_{h} \ar[rr]_{g} & & d
}
\noindent so that \w[.]{\xi:\ell\circ f\circ k\Ra g}
The composition defined by concatenation of squares:
\mydiagram[\label{eqcomptr}]{
a \ar[rr]^{f} & \ar@{=>}[d]_{\xi} &a' \ar[d]^{\ell}\\
b \ar[u]_{k} \ar[rr]_<<<<<<{g} & \ar@{=>}[d]_{\zeta} & b' \ar[d]^{n}\\
c \ar[u]_{m} \ar[rr]_{h} & & c'  \\
}
\noindent so \w{(k,\ell,\xi):f\to g} and \w{(m,n,\zeta):g\to h} compose to
\w[.]{(km,n\ell,m^{\ast}n_{\ast}\xi\oplus\zeta:f\to h)}
Note that \w{\Fac\D} is the Grothendieck construction on the functor
\w{\D_{0}\op\times\D_{0}\to\Cat} which sends \w{(a,b)} to the
groupoid \w[.]{\D(a,b)}

A natural system on $\D$ is then just a functor \w[.]{\K:\Fac\D\to\Abgp}
More generally, if $\C$ is any category, a \emph{natural system in $\C$}
on $\D$ is a functor \w[.]{\K:\Fac\D\to\C} Such a $\K$ assigns to each
\w{f:a\to b} in \w{\D_{0}} an object \w[,]{\K_{f}\in\C} and to each
diagram \wref{eqfaccat} a morphism
\w{\star_{(k,h,\xi)}:\K_{f}\to\K_{g}} in $\C$, where:
\begin{myeq}\label{eqtriop}
\star_{(k,h,\xi)}(\alpha)~=~\xi_{\ast}(k_{\ast}h^{\ast}(\alpha))
~=~\xi_{\ast}[s_{0}(h)\ch\alpha\ch s_{0}(k)]~,
\end{myeq}
\noindent (cf.\ \wref[).]{eqinterch} In other words, the operation
\w{\star_{(k,h,\xi)}} is composed of three simpler operations:
\w[,]{\xi_{\ast}} pre- and post-composition (as we can see by letting
$k$, $n$, and either $\xi$ or $\zeta$ in \wref{eqcomptr} be identity
maps or tracks).

The category of natural systems in $\C$ on $\D$ (with
natural transformations as morphisms) will be denoted by
\w[.]{\NS_{\D}(\C)=\C^{\Fac\D}}
\end{defn}

\begin{mysubsection}{Free natural systems}\label{sfreenat}
Note that there is a forgetful functor
\w[,]{U:\NS_{D}(\Set)\to\Obj(\Set)^{\Arr\D_{0}}} which assigns to a natural
system \w{\K:\Fac\D\to\Set} the corresponding set function on objects
\w[.]{\Obj(\K):\Obj(\Fac\D)=\Arr\D_{0}\to\Obj(\Set)} This has a left
adjoint \w[,]{\F:\Obj(\Set)^{\Arr\D_{0}}\to\NS_{D}(\Set)} constructed
as follows: given a function \w[,]{K:\Arr\D_{0}\to\Obj(\Set)} the
\emph{free natural system} \w{\F K:\Fac\D\to\Set} is defined for any
\w{g\in\Arr\D_{0}} by:
\begin{myeq}\label{eqfreens}
(\F K)_{g}~:=~ K(f)\times\Hom_{\Fac\D}(f,g)~,
\end{myeq}
\noindent where we identify \w[,]{(x,(h,k,\xi))} with
\w[,]{\xi_{\ast}(k_{\ast}h^{\ast}(x))} where \w{x\in K(f)} and
\w{(h,k,\xi):f\to g} are as in \wref[.]{eqfaccat}  The description of \w{\F K} on
morphisms is given by \wref[.]{eqcomptr}  Note that \w{K(g)} embeds in
\w{(\F K)_{g}} by \w[.]{x\mapsto (x,\Id)}
\end{mysubsection}

%
%
\section{Cohomology of track categories}
\label{cctc}

In order to define a Baues-Wirsching type cohomology theory for track
categories, we first consider:

\begin{mysubsection}{Nerves for track categories}\label{sntc}
Several concepts of nerves for higher categories have appeared in the
literature (see \cite{BergCN,BBFaroF,JWaltN}). We shall need the
following version, due in this form to Steet in \cite{StreeA} (but
see also \cite{CordD}):

If \w{\D=(\D_{1}\toto\D_{0})} is a track category, its \emph{nerve}
\w{\N\D} is the $3$-coskeletal simplicial set \w{\Nd} with:
\begin{enumerate}
\renewcommand{\labelenumi}{(\alph{enumi})\ }
\item \w{\sk{1}\Nd=\N\D_{0}} (the usual nerve of the category \w[);]{\D_{0}}
\item \w{N_{2}} has a $2$-simplex for every \w[,]{\xi\in\D_{1}(g\circ h,f)}
with faces:
\mydiagram[\label{eqtwosimp}]{
& 1 \ar[rd]^{g} \ar@{=>}[d]_{\xi}& \\
0 \ar[rr]_{f} \ar[ru]^{h} && 2
}
\item \w{N_{3}} has a $3$-simplex $\tau$ for every identity in \w{\D_{1}} of
  the form
\begin{myeq}\label{eqthreesimp}
f_{\ast}\eta\oplus\zeta=h^{\ast}\theta\oplus\xi
\end{myeq}
\noindent for a diagram:
\mydiagram[\label{eqtoda}]{
& & & \\
0  \ar[r]^{h} \ar@/^{2.7pc}/[rr]^>>>>>>>{m} \ar@/^{3.9pc}/[rrr]^{\ell}
\ar@/_{3.9pc}/[rrr]_{\ell} &
1 \ar[r]^{g} \ar@{=>}[u]^{\eta}
\ar@{=>}[d]_{\xi} \ar@/_{2.7pc}/[rr]_<<<<<<<<{k} &
2 \ar @{=>}[u]_{\zeta} \ar @{=>}[d]^{\theta} \ar[r]^{f} & 3\\
& & &
}
\noindent so that the (flattened) $3$-simplex $\tau$ is:
$$
\xymatrix@R=25pt{
&&&&3&&&&\\
&&&&&&&&\\
&& 2 \ar[rruu]^{f} \ar[lldd]_{f}\ar@{=>}[rrru]^{\zeta} \ar@{=>}[dd]_{\theta}
&&&&
0 \ar[lluu]_{\ell} \ar[rrdd]^{\ell} \ar[lldd]_{h} \ar[llll]_{m} && \\
&&&&&&&&\\
3 &&&& 1 \ar[llll]^{k} \ar[lluu]_{g} \ar@{=>}[uu]_{\eta}
\ar@{=>}[rrru]^{\xi} \ar[rrrr]_{k} &&&& 3
}
$$
\noindent (with outer edges identified pairwise).
\end{enumerate}

By setting \w[,]{m=gh} \w[,]{k=fg} and \w{\ell=fgh} (with identity
tracks) we obtain \w{\zeta=f_{\ast}\eta} and \w[.]{\xi=h^{\ast}\theta}
Similarly, for \w[,]{h=\Id} \w[,]{m=g} and \w[,]{\eta=\Id:g\Ra g} we
  have \w[.]{\zeta\oplus\theta=\xi} Finally, since the (horizontal)
  composition of tracks \w{\eta:f\Ra f'} and \w{\theta:g\Ra g'}  satisfies
$$
\theta\ch\eta ~=~ g_{\ast}\eta\oplus (f')^{\ast}\theta
~=~ f^{\ast}\theta\oplus f'_{\ast}\eta~,
$$
\noindent one can recover all the structure of $\D$ from \w[.]{\Nd}
\end{mysubsection}

\begin{mysubsection}{Nerves and natural systems}\label{snns}
For any simplicial set \w{\Nd} define \w{\dm:N_{n}\to N_{1}} by
\w{\dm(\sigma):=d_{1}d_{2}\dotsb d_{n-1}\sigma\in N_{1}} (the edge
between the initial and final vertex of $\sigma$).
In particular, when \w{\Nd=\N\D} we may define
\w{N_{n}[f]:=\{\sigma\in N_{n}\D~:\ \dm(\sigma)=f\}} (for any arrow
$f$ in \w{\D_{0}} and \w[).]{n\geq 1} For \w{n=0} we set
\w{N_{n}[\Id_{a}]=\{a\}} and \w{N_{n}[f]=\emptyset} otherwise.
This defines a function \w[,]{\Arr\D_{0}\to\Obj(\Set)} and thus for
each \w{n\geq 0} we have a free natural system in sets on $\D$ denoted
by \w{\tN_{n}\D} (see \S \ref{sfreenat}).
\end{mysubsection}

\begin{remark}\label{rthreesimp}
Observe that for \w[,]{n=3} the collection \w{N_{3}\D} of all
$3$-simplices in the nerve of a track category $\D$ itself constitutes a
natural system (in sets) on $\D$, with \w{f\mapsto N_{3}[f]} on objects of
\w[.]{\Fac\D} We define the operation $\star$ of \wref{eqtriop} on a
$3$-simplex $\tau$ indexed by a diagram \wref{eqtoda} as follows:

Given a diagram \wref{eqfaccat} of the following special form:
$$
\xymatrix@R=25pt{
0 \ar[rr]^{\ell} & \ar@{=>}[d]_{\lambda} & 3 \ar[d]^{e}\\
0 \ar[u]^{\Id} \ar[rr]_{n} & & 4
}
$$
\noindent we define \w{\lambda_{\ast}e_{\ast}\tau} to be the
$3$-simplex indexed by
\mydiagram[\label{eqtodanew}]{
& & & & & & \\
0  \ar[rr]^{h} \ar@/^{2.7pc}/[rrrr]^>>>>>>>{m} \ar@/^{3.7pc}/[rrrrrr]^{n}
\ar@/_{3.7pc}/[rrrrrr]_{n} &&
1 \ar[rr]^{g} \ar@{=>}[u]^{\eta}
\ar@{=>}[d]_{e_{\ast}\xi\oplus\lambda}
\ar@/_{2.7pc}/[rrrr]_<<<<<<<<{ek} & &
2 \ar @{=>}[u]_{e_{\ast}\zeta\oplus\lambda} \ar[rr]^{ef} \ar
@{=>}[d]^{e_{\ast}\theta} & & 4~.\\
& & & & & &
}
\noindent The precomposition \w[,]{\mu_{\ast}i^{\ast}\tau} when
\wref{eqfaccat} has the form:
$$
\xymatrix@R=25pt{
0 \ar[rr]^{\ell} & \ar@{=>}[d]_{\mu} & 3 \ar[d]^{\Id}\\
-1 \ar[u]^{i} \ar[rr]_{n} & & 3,
}
$$
\noindent is defined analogously.

This allows us to think of a $3$-simplex $\tau$ indexed by the diagram
\wref{eqtoda} as encoding a morphism in \w{\Fac\D} from $g$ to $\ell$
(compare \S \ref{dfaccat}), with $\tau$ thus representing the operation
\begin{myeq}\label{eqactns}
\tau_{\ast}~:=~\eta_{\ast}h^{\ast}\zeta_{\ast}f_{\ast}
\end{myeq}
in any natural system \w[.]{\K:\Fac\D\to\C}

In particular, any $2$-simplex $\rho$ as in \wref{eqtwosimp} can be
thought of as a degenerate $3$-simplex, and thus yields two operations
for a natural system \w[,]{\K:\Fac\D\to\C} namely:
\begin{myeq}\label{eqacttwos}
\rho_{\ast}:=\xi_{\ast}g_{\ast}:\K_{h}\to\K_{f}\hsp\text{and}\hsp
\rho^{\ast}:=\xi_{\ast}h^{\ast}:\K_{g}\to\K_{f}~.
\end{myeq}
\end{remark}

\begin{mysubsection}{Baues-Wirsching type cohomology}\label{sbwc}
If $\D$ is a track category, the face and degeneracy maps of the nerve
\w{\Nd=\N\D} induce maps of natural systems as follows:

\begin{enumerate}
\renewcommand{\labelenumi}{(\alph{enumi})\ }
\item If \w{\phi=d_{i}:N_{n}\D\to N_{n-1}\D} \wb{0<i<n} or
\w{\phi=s_{j}:N_{n}\D\to N_{n+1}\D} \wb[,]{0\leq j\leq n}
we define \w{\tphi:\tN_{n}\D\to \tN_{n\pm 1}\D} to be \w[.]{\F\phi}
\item Given \w[,]{\sigma\in N_{n}\D} consider the $2$-simplex
  \w{\rho_{0}=d_{2}\dotsb d_{n-1}\sigma} of \w[,]{\N\D} and define
the map of natural systems \w{\td_{0}:\tN_{n}\D\to\tN_{n-1}\D} by setting
\w[,]{\td_{0}(\iota(\sigma)):=\rho_{0}^{\ast}(d_{0}\sigma)}
(see \wref[).]{eqacttwos} This extends to all of \w{\tN_{n}\D} by the
  adjointness of \S \ref{sfreenat}.
\item Similarly define \w{\td_{n}:\tN_{n}\D\to\tN_{n-1}\D} by setting
\w[.]{\td_{n}(\iota(\sigma)):=(d_{1}\dotsb d_{n-2}\sigma)_{\ast}(d_{n}\sigma)}
\end{enumerate}

This makes \w{\tNd:=(\tN_{n}\D)_{n=0}^{\infty}} into a simplicial
object in the category \w{\NS_{\D}(\Set)} of natural systems in \w{\Set}
on $\D$. Compare this to the description of the usual
Baues-Wirsching complex for a category $\C$ in terms of a two-sided
bar construction \w{\Bd(\G)} in \cite[\S 3]{BJTonkC}.

Now let $\K$ be a natural system (in \w[)]{\Abgp} on a track category
$\D$. We define a cosimplicial set \w{\Cu(\D;\K)} by setting
\w[,]{C^{n}(\D;\K):=\Hom_{\NS_{\D}(\Set)}(\tN_{n}(\D),U\K)} where
\w{U:\Abgp\to\Set} is the forgetful functor.  Since \w{U\K} is an
abelian group object in \w[,]{\NS_{\D}(\Set)} we see
that \w{\Cu(\D;\K)} is actually a cosimplicial abelian group (or
equivalently, a cochain complex). Its cohomotopy (i.e., the cohomology
of the corresponding cochain complex) is defined to be the
\emph{Baues-Wirsching cohomology} of $\D$ with coefficients in $\K$,
written \w{H^{n}_{\BW}(\D;\K):=\pi^{n}(\Cu(\D;\K))} (compare \cite[\S 1]{BWirC}
and \cite[\S 2]{BBlaC}).
\end{mysubsection}

\begin{remark}\label{rdisctrack}
Note that the embedding \w{\Set\hra\Gpd} defined by taking the
discrete groupoid on a set extends to an full and faithful functor
\w[,]{\OC\hra\TraO} so we can think of any small category $\E$ as a
(discrete) track category \w[.]{\D_{\E}} Since \w{\Fac\D_{\E}}
coincides with \w[,]{\Fac\E} a natural system $\K$ on \w{\D_{\E}} is
just a natural system on $\E$ (see \cite[\S 1]{BWirC}), and thus
\w{H^{n}_{\BW}(\D_{\E};\K)} is naturally isomorphic to
\w[,]{H^{n}(\E;\K)} the Baues-Wirsching cohomology of $\E$.

Furthermore, if the track category $\D$ is actually an internal
category in groups (equivalently, a crossed module),
\w{H^{\ast}_{\BW}(\D,\K)} may be identified with the cohomology of the
classifying space \w{B\D} (see \cite{EllH} and \cite{PaolC}).
\end{remark}

%
%
\section{Comparison with \ww{\SO}-cohomology}
\label{csoc}

Recall from \S \ref{snot} that an \ww{\SO}-category \w{\Xd} is a
category enriched in simplicial sets (cf.\ \S \ref{snot}) \wh or
equivalently, a simplicial object in  \w{\Cat} \wh with constant
object set $\OO$.

\begin{mysubsection}{$\SO$-categories}\label{ssoc}
In \cite[\S 1]{DKanS}, Dwyer and Kan define a model category structure
on \w[,]{\SOC} in which the fibrations and weak equivalence are
defined objectwise (that is, on each mapping space \w[).]{\Xd(a,b)} As
noted in \S \ref{egnatmod}, if \w{\Xd} is fibrant, then \w{\hpi\Xd} is
a track category, and \w{\pi_{n}\Xd} is a module over \w{\hpi\Xd} for
each \w[.]{n\geq 2} Moreover, a map \w{\Phi:\Md\to\Nd} in \w{\SOC} is a weak
equivalence if and only if it induces an isomorphism in \w{\pi_{n}} for
all \w[.]{n\geq 1}

Note that homotopy functors for simplicial sets which strictly preserve
products extend to \w[,]{\SOC} with the usual properties.
For example, given an \ww{\SO}-category \w[,]{\Xd} for each
\w{n\geq 1} we have a \ww{\PSO{n}}-category \w{\Yd:=\Po{n}\Xd} in which each
mapping space \w{\Yd(a,b)} is the $n$-the Postnikov section \w[.]{\Po{n}\Xd(a,b)}

Similarly if $\M$ is a module over a track category $\D$ (\S \ref{dmodule}),
applying the twisted Eilenberg-Mac~Lane functor
\w{\EM{\D}{-}{n}:=K(-,n)\rtimes\N\D} objectwise to $\M$ (see
\cite[\S 5]{BDGoeR}) yields the \emph{(twisted) Eilenberg-Mac~Lane}
\ww{\SO}-category \w{\EM{\D}{\M}{n}} in \w[,]{\SOC/\N\D} using the natural map
\w[.]{q:\EM{\D}{\M}{n}\to \Po{1}\EM{\D}{\M}{n}\simeq\N\D} This map $q$ is
equipped with a section \w[,]{s:\N\D\to\EM{\D}{\M}{n}} making
\w{\EM{\D}{\M}{n}} into an abelian group object in \w{\SOC/\N\D}
(see \cite{DKanS} or \cite[\S 3]{BBlaC}).

Finally, for each \w{n\geq 0} there is an $n$-th \emph{$k$-invariant square}
functor, which assigns to each \ww{\SO}-category \w{\Yd} a homotopy
pull-back square:
%
\begin{myeq}\label{eqone}
\xymatrix@R=25pt{\ar @{} [dr] |<<<{\framebox{\scriptsize{PB}}}
\Po{n+1}\Yd \ar[r]^{p\ui{n+1}} \ar[d] & \
\Po{n}\Yd \ar[d]^{k_{n}}\\ B\D \ar[r] & \EM{\D}{\M}{n+2}
}
\end{myeq}
\noindent  (over \w[),]{B\D} where \w{\D:=\hpi\Yd} and $\M$ is the
module \w{\pi_{n+1}\Yd} over $\D$.
The map \w{k_{n}:\Po{n}Y\to\EM{\D}{\M}{n+2}} is called the $n$-th
(functorial) $k$-\emph{invariant} for \w[.]{\Yd} Compare
\cite[Proposition 6.4]{BDGoeR}.
\end{mysubsection}

\begin{remark}\label{rfree}
There is a forgetful functor \w{U:\Cat\to\DG} to the category of directed
graphs, whose left adjoint \w{F:\DG\to\Cat} is the free category functor
(both $U$ and $F$ are the identity on objects). This pair of adjoint
functors defines a comonad \w[,]{\F:=FU:\Cat\to\Cat} and
thus an augmented simplicial category \w{\Ed\to\Cat} with
\w[.]{\E_{n}:=\F^{n+1}\C} If \w[,]{\C\in\OC} then \w[.]{\Ed\in\SOC}
A simplicial category \w{\Ed\in s\OC\cong\SOC} is \emph{free} if each category
\w[,]{E_{n}} and each degeneracy functor \w[,]{s_{j}:E_{n}\to E_{n+1}} is in
the image of the functor \w[,]{\Fs:\OC\to\OC} up to isomorphism.
\end{remark}

\begin{defn}\label{dsocoh}
Given a track category $\D$, a module $\M$, and an \ww{\SO}-category
\w{\Xd} equipped with a map \w[,]{\Xd\to\N\D} Dwyer, Kan, and Smith
define the $n$-th \ww{\SO}-\emph{cohomology group}
of \w{\Xd} \emph{with coefficients in} $\M$ to be
$$
H^{n}_{\SO}(\Xd/\N\D;\M)~:=~[\Xd,\EM{\D}{\M}{n}]_{\SOC/\N\D}~=~
\pi_{0}\map_{\N\D}\,(\Xd,\EM{\D}{\M}{n})~.
$$
When \w{\Xd\to\N\D} is a weak equivalence, we abbreviate
\w{H^{n}_{\SO}(\Xd/\N\D;\M)} to \w[.]{H^{n}_{\SO}(\D;\M)}
\end{defn}

Another model for topologically enriched categories is provided by
\w[,]{\COC} where $\C$ denotes the category of cubical sets:

\begin{defn}\label{dcube}
Let $\Box$ denote the \emph{Box category},  whose objects are the
abstract cubes \w{\{\Ic{n}\}_{n=0}^{\infty}} (where \w{\Ic{}:=\{0,1\}}
and \w{\Ic{0}} is a single point). The morphisms of $\Box$ are
generated by the inclusions \w{d^{i}_{\varepsilon}:\Ic{n-1}\to\Ic{n}}
and projections \w{s^{i}:\Ic{n}\to\Ic{n-1}} for \w{1\leq i\leq n}
and \w[,]{\varepsilon\in\{0,1\}}
A contravariant functor \w{K:\Box\op\to\Set} is called a
\emph{cubical set}.

We write \w{K_{n}} for the set \w{K(\Ic{n})} of \emph{$n$-cubes} of $K$.
The collection of all these, for \w[,]{n\geq 0} form a category
\w{\C_{K}} (with inclusions as morphisms). The two maps
\w{d_{i}^{\varepsilon}:K_{n}\to K_{n-1}} \wb{\varepsilon\in\{0,1\}}
induced by \w{d^{i}_{\varepsilon}} are called $i$-th \emph{face maps}
of $K$, and the map \w{s_{j}:K_{n}\to K_{n+1}} (induced by \w[)]{s^{j}} is
called the $j$-th \emph{degeneracy}. The sub-cubical set of $K$ generated
by \w{K_{0}\dotsc,K_{n}} under the degeneracies is called the $n$-th
\emph{cubical skeleton} of $K$, written \w[.]{\skc{n}K}
\end{defn}

\begin{defn}\label{dadjcs}
If $K$ and $L$ are two cubical sets, their \emph{cubical tensor}
\w{K\otimes L\in\C} is defined:
$$
K\otimes L~:=~\colim_{I^{j}\in\C_{K},~I^{k}\in\C_{L}}~I^{j+k}~.
$$
This defines a symmetric monoidal structure on cubical sets.

Cubical sets are related to simplicial sets by a pair of adjoint functors
\begin{myeq}\label{eqadj}
\C\adj{T}{\Sb}\Ss~,
\end{myeq}
\noindent where the \emph{triangulation} functor $T$ is defined by:
\w[,]{TK:=\colim_{I^{n}\in\C_{K}}~\Delta[1]^{n}} and the \emph{cubical singular}
functor \w{\Sb} is defined by \w[.]{(\Sb X)(I^{n}):=\Hom_{\Ss}(T I^{n},X)}
These induce equivalences of the corresponding homotopy
categories. For further details, see \cite{CisiP}, or the surveys in
\cite{JardC,JardCH}.
The adjoint pair \wref{eqadj} prolong to functors between \w{\COC} and
\w[,]{\SOC} which also induce equivalences of homotopy categories
(cf.\ \cite{BJTurnH}).
\end{defn}

\begin{mysubsection}{Models for \ww{\PCO{1}}-categories}\label{scubtrack}
Any track category $\D$ is equivalent to one in which \w{\D_{0}} is
free on its homotopy category \w{\Pi_{0}\D} (see \cite[(A.2)]{BDreC}).
We use this to give an explicit description of a cofibrant and fibrant
$2$-coskeletal \ww{\CO}-category $W$ weakly equivalent to \w{\Sb\N\D}
directly in terms of $\D$, in the spirit of the Boardman-Vogt
``W-construction'' (see \cite[\S 3]{BJTurnH}):

\begin{enumerate}
\renewcommand{\labelenumi}{(\alph{enumi})\ }
\item The $0$-cubes of the cubical mapping space \w{W(a,b)} correspond to maps
  in the free category \w[,]{\D_{0}=F\ho\D} which in turn are described by
  composable sequences
\begin{myeq}\label{eqcompseq}
\fd=(a=a_{n+1}\xra{f_{n+1}}a_{n}\xra{f_{n}}a_{n-1}\dotsc
a_{i+1}\xra{f_{i+1}}a_{i}\xra{f_{i}}a_{i-1}\dotsc a_{1} \xra{f_{1}}a_{0}=b)
\end{myeq}
\noindent in \w[.]{\ho\D} The corresponding vertex is denoted by
\w[,]{\ocmp{\fd}:=f_{1}\otimes\dotsc\otimes f_{n}\otimes f_{n+1}}
using the monoidal structure in $\C$.
\item For any track
$$
\xymatrix@R=25pt{
& b \ar[rd]^{f} \ar@{=>}[d]_{\xi}& \\
a \ar[rr]_{h} \ar[ru]^{g} && c
}
$$
\noindent in $\D$, we have a $1$-cube \w{\xi:f\otimes g\Ra h} in \w{W(a,b)}
(where either $f$ or $g$ could be \w[).]{\Id}
\item For any diagram in $\D$ of the form \wref[,]{eqtoda} satisfying
\wref[,]{eqthreesimp} we have a $2$-cube in $W$
\mydiagram[\label{eqsquare}]{
f\otimes g\otimes h \ar[rr]^{f\otimes\eta} \ar[d]^{\theta\otimes h} &&
f\otimes m \ar[d]^{\zeta} \ar[d]^{\zeta} \\
k\otimes h \ar[rr]^{\xi} && \ell~.
}
\item In general, an $n$-cube \w{\Ic{\fd}} of \w{W(a,b)} is determined
  by the following data:
\begin{enumerate}
\renewcommand{\labelenumii}{$\bullet$\ }
\item A composable sequence \w{\fd} in \w{\D_{0}} of length \w{n+1} as
 in \wref[.]{eqcompseq}
\item For every \w[,]{1\leq i\leq j\leq n} a map \w{h^{[i,j]}:a_{j}\to a_{i}}
  in \w[,]{\D_{0}} where \w{h^{[i,i+1]}:=f_{i+1}} and
  \w[.]{h^{[i,i]}:=\Id_{a_{i}}}
\item For each partition of the form:
\begin{myeq}\label{eqpartition}
\alpha=(0=i_{0}<i_{1}<\dotsc<i_{k-1}< i_{k}=n+1)~,
\end{myeq}
\noindent we thus obtain a composable sequence
\begin{myeq}\label{eqnewcompseq}
\hd[\alpha]=(a_{n+1}=a_{i_{k}}\xra{h^{[i_{k-1},i_{k}]}}a_{i_{k-1}}
\xra{h^{[i_{k-2},i_{k-1}]}}a_{i_{k-2}}\dotsc
a_{i_{1}}\xra{h^{[i_{0},i_{1}]}}a_{i_{0}}=a_{0})~.
\end{myeq}
\noindent in \w{\D_{0}} of length $k$. The vertices of \w{\Ic{\fd}}
are indexed by \w[,]{\alpha\in A} with the corresponding vertex denoted by
\w[.]{\ocmp{\gd[\alpha]}} In particular, the vertex indexed by
\w{\ocmp{\fd}=d_{1}^{1}d_{2}^{1}\dotsc d_{n}^{1}\Ic{\fd}} is called the
\emph{initial vertex} of \w[,]{\Ic{\fd}} and the vertex indexed by the
single map  \w{h^{[0,n+1]}=d_{1}^{0}d_{2}^{0}\dotsc d_{n}^{0}\Ic{\fd}}
is called the \emph{terminal vertex} of \w[.]{\Ic{\fd}} By analogy
with \S \ref{rthreesimp} we denote \w{h^{[0,n+1]}} by \w[.]{\dm\Ic{\fd}}
\item For every \w[,]{1\leq i\leq j\leq k\leq n} a track
  \w{\xi^{[i,j,k]}:h^{[i,j]}\circ h^{[j,k]}\Ra h^{[i,k]}} in
  \w[,]{\D_{1}} which is the identity track if \w{i=j} or \w{j=k} (or both).
\item These tracks are \emph{compatible} in that for every
\w[,]{1\leq i\leq j\leq k\leq \ell\leq n} the diagram
\mydiagram[\label{eqsqtra}]{
h^{[i,j]}\circ h^{[j,k]}\circ h^{[k,\ell]}
\ar@{=>}[rrr]^{h^{[i,j]}_{\ast}\xi^{[j,k,\ell]}}
\ar@{=>}[d]_{(h^{[k,\ell]})^{\ast}\xi^{[i,j,k]}}  &&&
h^{[i,j]}\circ h^{[j,\ell]} \ar@{=>}[d]^{\xi^{[i,j,\ell]}} \\
h^{[i,k]}\circ h^{[k,\ell]}\ar@{=>}[rrr]_{\xi^{[i,k,\ell]}} &&& h^{[i,\ell]}
}
\noindent commutes in \w[.]{\D_{1}}
\item Given $\alpha$ as above and \w[,]{0<j<k} let \w{(\alpha,\hat{j})}
  be the partition obtained from $\alpha$ by omitting \w[.]{i_{j}}
There is an edge in \w{\Ic{\fd}} from \w{\ocmp{\gd[\alpha]}} to
\w[,]{\ocmp{\gd[\alpha,\hat{j}]}} indexed by
$$
h^{[i_{0},i_{1}]}\otimes\dotsc\otimes h^{[i_{j-2},i_{j-1}]}\otimes
\xi^{[i_{j-1},i_{j},i_{j+1}]}\otimes h^{[i_{j+1},i_{j+2}]}\otimes \dotsc
h^{[i_{k-1},i_{k}]}~.
$$
\item The $2$-faces in \w{\Ic{\fd}} are similarly determined by $\alpha$ as
  above and a choice of \emph{two} indices \w{0<j'<j''<k} to be
  omitted, and so on.
\end{enumerate}
\end{enumerate}
\end{mysubsection}

\begin{remark}\label{rcubsphere}
Let \w{S^{n}\in\C_{\ast}} be a pointed cubical
sphere (e.g., \w[),]{I^{n}/\partial I^{n}} which corepresents
\w{\pi_{n}} for pointed cubical sets. Given a fibrant cubical set $K$,
with \w{V:=\Po{1}K} a fibrant cubical model for \w[,]{N\hpi K} the
\emph{twisted sphere} \w{S^{n}\otimes V} corepresents the
\ww{\hpi K}-module \w{\{\pi_{n}(K,k)\}_{k\in K_{0}}} in \w[.]{\C/V}

More generally, if \w{X\in\COC} is fibrant and $W$ is a cofibrant
model for \w[,]{\Po{1}X} then for each \w{a,b\in\OO} the twisted
cubical $n$-sphere \w{S^{n}\otimes W(a,b)} generates under pre- and
post-composition a \ww{\CO}-category \w{S^{n}_{(a,b)}\otimes W} in \w[,]{\COC/W}
with \w[.]{\skc{n-1}(S^{n}_{(a,b)}\otimes W)=\skc{n-1}W} This again
corepresents \w[,]{\pi_{n}X(a,b)} so the various choices of
\w{a,b\in\OO} (for fixed $n$) together corepresent the natural system
\w{\pi_{n}X} of Example \ref{egnatmod}.
\end{remark}

\begin{mysubsection}{Cubical Eilenberg-Mac~Lane categories}\label{scubemcat}
Let $\D$ be a track category and $\K$ be a natural system on $\D$,
with $\M$ the corresponding module over $\D$ (cf.\ Proposition
\ref{pnatmod}), and let $W$ be the \ww{\CO}-model for $\D$ constructed
above. For each \w[,]{n\geq 2} we can use it to construct an explicit
fibrant \ww{\CO}-model $E$ for \w[,]{\EM{\D}{\M}{n}} as follows:

We start with \w[,]{\skc{n-1}E:=\skc{n-1}W} and let
\w{W_{n}[f]:=\{I\in W_{n}~:\ \dm I=f\}} for each arrow
\w{f\in\D_{0}} (compare \S \ref{rthreesimp}). The degenerate $n$-cubes
of $E$ are those of $W$, and the $n$-cubes of $E$ are defined by setting
\w[,]{E_{n}:=\{(I,\alpha)~:\ I\in W_{n},\ \alpha\in\K_{\dm(I)}\}}
so \w{E_{n}[f]=\K_{f}\times W_{n}[f]} for each \w[.]{f\in\D_{0}} 
The face maps are zero if \w[,]{\alpha\neq 0} 
\w[,]{d^{\varepsilon}_{i}(I,0):=d^{\varepsilon}_{i}I\in W_{n-1}} and the 
degeneracies are formal (i.e.,  we add symbols \w{s_{j}(I,\alpha)\in E_{n+1}} 
for each \w[).]{1\leq j\leq n} Note that \w{(I,\alpha)} is never
degenerate for \w[,]{\alpha\neq 0} even though $I$ itself may be
degenerate (i.e., some or all of the factors \w{f_{i}} of the sequence
\w{\fd} indexing $I$ may be identity maps) or decomposable. 

For each \w{J\in W_{n+1}} indexed by
\w[,]{\fd=(a_{n+2}\xra{f_{n+2}}a_{n+1}\dotsc a_{1}\xra{f_{1}}a_{0})}
consider a collection \w{\ba=\lra{\alpha_{i}^{0},\alpha_{i}^{1}}_{i=1}^{n+1}}
of elements in $\K$, with
\begin{myeq}\label{eqcycleone}
\alpha_{i}^{\varepsilon}\in\K_{\dm(d_{i}^{\varepsilon}J)}
\end{myeq}
\noindent and
\begin{myeq}\label{eqcycletwo}
\alpha_{i}^{1}=0 \text{~~for~~}2\leq i\leq n~.
\end{myeq}

If $\ba$ satisfies the \emph{cycle condition}
\begin{myeq}\label{eqcycle}
\xi^{[0,n+1,n+2]}_{\ast}f_{n+2}^{\ast}\alpha_{1}^{1}~+~
\sum_{i=2}^{n}~(-1)^{i}(\alpha_{i}^{1}-\alpha_{i}^{0})~
+(-1)^{n+1}~\xi^{[0,1,n+2]}_{\ast}(f_{0})_{\ast}\,\alpha_{n+1}^{1}~=~0~,
\end{myeq}
\noindent then we have a unique \ww{(n+1)}-cube \w{(J,\ba)} in
\w[,]{E_{n+1}} with
\w[.]{d_{i}^{\varepsilon}(J,\ba)=(d_{i}^{\varepsilon}J,a_{i}^{\varepsilon})}
We can think of \wref{eqcycle} as a ``matching face condition'' for the
collection $\ba$, as in a Kan complex (cf.\ \cite[I, \S 3]{GJarS}).

Finally, \w{E=\EM{\D}{\M}{n}} is \ww{(n+1)}-coskeletal.
Using Remark \ref{rcubsphere}, we see that \w{\pi_{n}E\cong\M} as
$\D$-modules, and \w{\pi_{n}E=0} for \w[.]{2\leq i\neq n}
\end{mysubsection}

\begin{example}\label{egncubes}
Note that \wref{eqcycle} succinctly encodes the condition that
\w{\pi_{n}E\cong\M} as a module over $\D$ (see \S \ref{egnatmod}): by
making suitable choices of $\ba$ (with only two nonzero entries
\w[),]{a_{i}^{\varepsilon}} we can ensure that all of the identities
of \S \ref{dnatsyst} are satisfied.
For example, let \w[,]{(I,\alpha)\in E_{n}} with $I$ indexed by
\w[:]{\fd=(a_{n+1}\xra{f_{n+1}}\dotsc\xra{f_{1}}a_{0})}

\begin{enumerate}
\renewcommand{\labelenumi}{(\alph{enumi})\ }
\item Post-composing \w{(I,\alpha)} with the $0$-cube of \w{W\subseteq E}
indexed by \w{g:a_{0}\to b} yields a ``formal'' composite
\w[,]{I':=g\otimes(I,\alpha)\in E_{n}} with
\w[,]{\dm I'=g\otimes\dm I=g\cdot h^{[0,n+1]}} because the (free)
category \w{\D_{0}} is identified with \w{W_{0}} by construction.
In order for \w{I'} to be identified up to homotopy with
\w[,]{(I',g_{\ast}\alpha,)} we have an \ww{(n+1)}-cube \w{(J,\ba)\in E_{n+1}}
such that \w[,]{d_{1}^{0}J=I'} \w[,]{d_{1}^{1}J=(g\otimes I,g_{\ast}\alpha)}
and \w{d_{i}^{\varepsilon}J=(s_{i}d_{i}^{\varepsilon}I,0)} is
degenerate for all \w{2\leq i\leq n+1} and \w[.]{\varepsilon=0,1}
\item One can similarly construct an \ww{(n+1)}-cube identifying
\w{(I\otimes h,h^{\ast}\alpha)} with \w[.]{h\otimes(I,\alpha)}
\item If \w{\psi:h^{[0,n+1]}\Ra k} is a track in $\D$, let
\w{I''\in W_{n}} be the $n$-cube obtained from $I$ by replacing its
terminal vertex \w{\dm I=h^{[0,n+1]}} by $k$, and post-composing all
edges ending in \w{h^{[0,n+1]}} with $\psi$.
Again we have an \ww{(n+1)}-cube \w{(J,\ba)\in E_{n+1}}
with \w{d_{1}^{0}(J,\ba)=(I,\alpha)} and
\w[.]{d_{1}^{1}(J,\ba)=(I'',\psi_{\ast}\alpha)}
All other faces are determined by the requirement that all edges
are degenerate (indexed by \w[),]{\Id}
except for \w[.]{d_{2}^{0}d_{2}^{0}\dotsc d_{n+1}^{0}(J,\ba)=\psi}

For instance, if \w[,]{n=2} $I$ is as in \wref[,]{eqsquare} and
\w{\psi:\ell\to k} is a track in $\D$, \w{(J,\ba)} will be:
\mydiagram[\label{eqncubes}]{
%
%
f\otimes g\otimes h
\ar@{}[drrrrr] |<<<<<<<<<<<<<<<<<<<<<<<<<<<<<<<{\framebox{\scriptsize{$\alpha$}}}
\ar[rrrr]^>>>>>>>>>>>>>>>{f\otimes\eta} \ar[rrd]^{\theta\otimes h}
\ar[dd]_{\Id} &&&& f\otimes m \ar[rrd]^{\zeta}
\ar[dd]_>>>>>>>>>>>>>>>{\Id}  && \\
%
%
&& k\otimes h \ar[rrrr]^<<<<<<<<<<<<{\xi}
\ar[dd]_>>>>>>>>>>>>>>>{\Id} &&&& \ell \ar[dd]_{\psi} \\
%
%
f\otimes g\otimes h\ar@{}[drrrrr]
|<<<<<<<<<<<<<<<<<<<<<<<<<<<<<<<{\framebox{\scriptsize{$\psi_{\ast}\alpha$}}}
\ar[rrrr]^>>>>>>>>>>>>>>>{f\otimes\eta} \ar[rrd]^{\theta\otimes h} &&&&
f\otimes m \ar[rrd]^{\psi_{\ast}\zeta} && \\
%
%
&& k\otimes h
\ar[rrrr]^{\psi_{\ast}\xi} &&&& k
}
\noindent with the top square being \w[,]{(\alpha,I)} and the bottom
\w[.]{(\psi_{\ast}\alpha,I'')}
\item Finally, a suitable choice of \w{(J,\ba)} ensures that
\w{\alpha_{1}+\alpha_{2}=\alpha_{3}} in \w[\vsm.]{\pi_{n}E}
\end{enumerate}
\end{example}

We are now in a position to prove the following analogue of
\cite[Theorem 3.9]{BBlaC}:

%
%
\begin{thm}\label{tbwso}
Let $\D$ be a track category, with \w{\Xd:=\N\D} the corresponding
\ww{\PSO{1}}-category, and let $\K$ be a natural system on $\D$, with
$\M$ the corresponding module over $\D$.
For each \w[,]{n\geq 1} the $n$-th Baues-Wirsching cohomology group
\w{H^{n}_{\BW}(\D;\K)} is then naturally isomorphic to the \ww{(n-1)}-st
\ww{\SO}-cohomology group \w[.]{H^{n-1}_{\SO}(\N\D;\M)}
\end{thm}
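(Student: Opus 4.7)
The plan is to realize both cohomology groups as the cohomology of explicit cochain complexes and exhibit an isomorphism between them, using the cofibrant cubical model $W \to \N\D$ from \S\ref{scubtrack} together with the fibrant cubical Eilenberg--Mac~Lane category $E := \EM{\D}{\M}{n-1}$ from \S\ref{scubemcat}. Via the Quillen equivalence \wref{eqadj} prolonged to enriched categories, I first identify $H^{n-1}_{\SO}(\N\D;\M) = \pi_{0}\map_{\N\D}(W,E)$. Since by construction $\skc{n-2}E = \skc{n-2}W$ and $E$ is $n$-coskeletal in the cubical sense, a map $\phi: W \to E$ over $W$ extending the identity on the $(n-2)$-skeleton is determined by an assignment $I \mapsto \alpha_{\phi}(I) \in \K_{\dm I}$ on $(n-1)$-cubes, subject to the cycle condition \wref{eqcycle} on each $n$-cube of $W$. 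Two such maps are homotopic over $W$ iff their difference is the coboundary of an $(n-2)$-cochain, so that $H^{n-1}_{\SO}(\N\D;\M)$ is the cohomology in degree $n-1$ of a concrete cubical cochain complex $D^{\bullet}$.

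Second, I would establish a natural bijection between nondegenerate $(n-1)$-cubes of $W$ and nondegenerate $n$-simplices of $\N\D$. The low-dimensional cases match directly from the descriptions in \S\ref{sntc} and \S\ref{scubtrack}: $0$-cubes of $W$ correspond to $1$-simplices (arrows in $\D_{0}$); $1$-cubes carrying a track $\xi:f\otimes g \Ra h$ correspond to $2$-simplices as in \wref{eqtwosimp}; and $2$-cubes \wref{eqsquare} encode exactly the coherence identity \wref{eqthreesimp} defining $3$-simplices. Because $\N\D$ is $3$-coskeletal and $W$ is $2$-coskeletal in the cubical sense, with the higher coherence identities \wref{eqsqtra} on the tracks $\xi^{[i,j,k]}$ matching the matching-face conditions among $3$-simplices of $\N\D$, the bijection extends canonically to all higher dimensions.

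Third, under this bijection I would check that the cubical cycle condition \wref{eqcycle} on an $n$-cube of $W$ is precisely the Baues--Wirsching coboundary relation $\sum_{i=0}^{n}(-1)^{i}\td_{i}\phi = 0$ on the associated $n$-simplex of $\N\D$: the initial summand in \wref{eqcycle} (involving $\xi^{[0,n+1,n+2]}_{\ast}f_{n+2}^{\ast}$) corresponds to $\td_{0}$ from \S\ref{sbwc}(b); the middle alternating sum corresponds to the induced face maps $\td_{i} = \F d_{i}$ for $0 < i < n$; and the final summand (involving $\xi^{[0,1,n+2]}_{\ast}(f_{0})_{\ast}$) corresponds to $\td_{n}$ from \S\ref{sbwc}(c), all under the decoding \wref{eqtriop} of morphisms in $\Fac\D$. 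This yields an isomorphism of cochain complexes $C^{n}(\D;\K) \cong D^{n-1}$, and hence the claimed natural isomorphism on cohomology.

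The main obstacle is the combinatorial bookkeeping in the third step: one must carefully track the signs and the action of the coherence tracks to verify that the three flavors of operations $\td_{0}$, $\td_{i}$, $\td_{n}$ assemble into exactly the three groups of terms in the cubical matching-face condition \wref{eqcycle}. Once this matchup is established, the naturality of the resulting isomorphism in $\D$, $\K$ and $\M$ is formal, following from the functoriality of $W$, $E$, and both cochain constructions.
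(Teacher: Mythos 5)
Your proposal follows essentially the same route as the paper: pass to the cubical model via the adjunction \wref[,]{eqadj} use the explicit cofibrant $W$ of \S \ref{scubtrack} and the coskeletal Eilenberg-Mac~Lane object of \S \ref{scubemcat} to identify maps over $W$ with maps of natural systems \w{W_{n}\to\K} satisfying \wref[,]{eqcycle} and then match cubes with simplices of \w{\N\D} (using \wref{eqcycletwo} to discard the extra cubical faces) to recover the Baues-Wirsching complex up to the dimension shift. The only difference is cosmetic bookkeeping \wh you build the shift in from the start by working with \w{\EM{\D}{\M}{n-1}} \wh so this is the paper's argument.
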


\begin{proof}
First note that the adjoint pair \wref{eqadj} induce a natural
isomorphism between the \ww{\SO}-cohomology group
\w{H^{n}_{\SO}(\N\D;\M)} and the \ww{\CO}-cohomology group
$$
H^{n}_{\CO}(W/\D;\M):=[W,\EM{\D}{\M}{n}]_{\COC/W}~,
$$
\noindent where $W$ is a cofibrant model for \w[.]{\Sb\N\D}

As in \S \ref{scubtrack}, we may assume that \w{\D_{0}} is free, and identify
\w{f\otimes g} in \w{W_{0}} with the composite \w{fg} in \w[.]{\D_{0}}
For each arrow \w{f\in\D_{0}} set \w{W_{n}[f]:=\{I\in W_{n}~:\ \dm I=f\}}
(compare \S \ref{rthreesimp}).
Given \w{\bllt\xra{g}\bllt\xra{f}\bllt\xra{h}\bllt} in \w[,]{\D_{0}}
we may use the cubical pre- or post-composition with $0$-vertices of
$W$ to define operations \w{g^{\ast}: W_{n}[f]\to W_{n}[fg]} and
\w[,]{h_{\ast}:W_{n}[f]\to W_{n}[hf]} respectively.
Furthermore, for each track \w{\psi:f\circ g\Ra h} and
\w{I=I'\otimes I''\in W_{n}[f\otimes g]}
with \w{I'\in W_{n}[f]} and \w[,]{I''\in W_{n}[g]} we obtain a new
$n$-cube \w{\psi_{\ast}I} by post-composing all tracks indexing edges
into the terminal vertex \w{f\otimes g} of $I$ with $\psi$, as in
Example \ref{egncubes} \wh see bottom square in Figure \wref[.]{eqncubes}
Under these operations, \w{W_{n}} constitutes a natural system in sets
on \w[.]{\D_{0}}

Note that a map \w{\phi:W\to\EM{\D}{\M}{n}} of \ww{\CO}-categories
over $W$ is determined by its values on the indecomposable $n$-cubes
in $W$, and $\phi$ must take \w[,]{I\in W_{n}} with \w{f:=\dm I} to an
$n$-cube \w{(I,\tilde{\phi}(I))\in E_{n}} with \w[.]{\tilde{\phi}(I)\in\K_{f}}
Moreover, any \ww{(n+1)}-cube \w{J\in W_{n+1}}
must map to \w{(J,\ba)} satisfying \wref[,]{eqcycle} so that the map
\w{\tilde{\phi}:W_{n}\to\K} described above is in fact a map of
natural systems over $\D$. Conversely, any such map $\tilde{\phi}$ in
\w{\NS_{\D}} satisfies the cocycle condition, and so uniquely extends to
\w[;]{W_{n+1}} it thus determines a map \w{\phi:W\to\EM{\D}{\M}{n}}
over $W$, since both $W$ and \w{\EM{\D}{\M}{n}} are \ww{(n+1)}-coskeletal.
Therefore, we can compute \w{[W,\EM{\D}{\M}{n}]_{\COC/W}} as the
cohomology of the cochain complex associated to the cocubical abelian group
\w[.]{\Cus:=\Hom_{\NS_{\D}}(W,\K)} \wh see \cite[\S 3]{BBlaC} and
compare \cite[\S 4]{BlaQ}.

Finally, the $n$-cubes in \w[,]{W_{n}[f]} indexed by composable
sequences \w{\fd} of length \w{n+1} in \w[,]{\D_{0}} with the
additional track data described in \S \ref{scubtrack}, are clearly in
one-to-one correspondence with the \ww{(n+1)}-simplices of \w[.]{\N\D}
By \wref[,]{eqcycletwo} the face maps \w{d^{1}_{i}} \wb{1<i<n+1}
are not relevant, so we can actually identify the cochain complex
associated to the cocubical abelian group
\w{\Cus:=\Hom_{\NS_{\D}}(W,\K)} with that associated to the
cosimplicial abelian group
\w{\Cu(\D;\K):=\Hom_{\NS_{\D}}(\tN(\D),\K)} used to define
\w{H^{n}_{\BW}(\D;\K)} (with a dimension shift).
\end{proof}

%
%
\section{Two-track extensions of track categories}
\label{cttetc}

Now let $\G$ be a two-track category.  As noted in \S \ref{shtc}, we can
associate to $\G$ its homotopy track category \w{\Pi_{0}\G=\ho\G}
by taking the coequalizer of the maps of track categories
\w{\G_{2}\toto\G_{1}} (in the notation of \wref[).]{eqtwotrack}
At the same time, one can associate with $\G$ a natural system
\w{\K:=\Pi_{2}\G} on $\D$, which may be identified with
\w{\pi_{2}\N\G} described in Example \ref{egnatmod}.

\begin{defn}\label{dbwcc}
A \emph{two-track extension} of track category $\D$ by a natural system
$\K$ on $\D$ is two-track category \w{\G=(\G_{[1]}\toto\G_{[0]})} such
that \w{\ho\G} is weakly equivalent to $\D$, and $\K$ is isomorphic to
\w{\Pi_{2}\G} under this equivalence. We use the following diagram to
describe this situation:
\mydiagram[\label{eqttrkext}]{
\K \ar[r] & \G_{[1]}\ar@<0.7ex>[r]^{d_{0}} \ar@<-.7ex>[r]_{d_{1}}  &
\G_{[0]}\ar[r] & \D
}
\noindent (compare \wref[).]{eqtwotrackext}
\end{defn}

As an immediate consequence of Theorem \ref{tbwso} we deduce
(compare \cite[(4.6)]{BDreC}):

%
%
\begin{cor}\label{cbwclass}
Equivalence classes of two-track extensions of a track category $\D$
by a natural system $\K$ are in one-to-one correspondence with
elements of \w[.]{H^{4}_{\BW}(\D;\K)}
\end{cor}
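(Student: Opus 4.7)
The plan is to deduce this classification from Theorem \ref{tbwso} by translating two-track extensions of $\D$ by $\K$ into an equivalent classification problem for $\PSO{2}$-categories, and then applying the Dwyer-Kan Postnikov-type classification via $k$-invariants.

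First, I would use Corollary \ref{cmodeltt}, which provides an equivalence of localized categories $\TTraO\bsim \simeq \PSOC{2}\bsim$ via $\dP$ and $\dN$. Under this equivalence, a two-track extension
\[
\K \longrightarrow \G_{[1]} \rightrightarrows \G_{[0]} \longrightarrow \D
\]
as in \wref{eqttrkext} corresponds to a $\PSO{2}$-category $\Xd := \dN\G$ equipped with an identification $\Po{1}\Xd \simeq \N\D$ (from $\ho\G \simeq \D$) and an isomorphism of modules $\pi_{2}\Xd \cong \M$ over $\D$, where $\M$ is the module associated to $\K$ by Proposition \ref{pnatmod} (this uses the algebraic Postnikov description from Remark \ref{rpost} identifying $\Pi_{2}\G$ with $\pi_{2}\dN\G$). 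Conversely, given such an $\Xd \in \PSOC{2}$ with $\Po{1}\Xd \simeq \N\D$ and $\pi_{2}\Xd \cong \M$, the two-track category $\dP\Xd$ is a two-track extension of $\D$ by $\K$, and these assignments are mutually inverse on equivalence classes.

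Next, I would invoke the Dwyer-Kan classification of Postnikov sections of $\SO$-categories (cf.\ \S \ref{ssoc} and \cite[\S 3]{BBlaC}): the first $k$-invariant square \wref{eqone} with $n=1$ identifies equivalence classes of $\PSO{2}$-categories $\Xd$ over $\N\D$ with prescribed module $\pi_{2}\Xd = \M$ with homotopy classes of maps $k_{1}:\N\D \to \EM{\D}{\M}{3}$ in $\SOC/\N\D$, that is, with elements of
\[
H^{3}_{\SO}(\N\D;\M) \;=\; \pi_{0}\map_{\N\D}(\N\D,\EM{\D}{\M}{3}).
\]
Combining this with the natural isomorphism of Theorem \ref{tbwso},
\[
H^{3}_{\SO}(\N\D;\M) \;\cong\; H^{4}_{\BW}(\D;\K),
\]
yields the desired bijection between equivalence classes of two-track extensions and elements of $H^{4}_{\BW}(\D;\K)$.

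The main obstacle will be ensuring that the notion of equivalence of two-track extensions (which refers to $\ho\G \simeq \D$ and $\Pi_{2}\G \cong \K$) matches precisely with the equivalence relation on $\PSO{2}$-categories over $\N\D$ with fixed module $\M$ that is classified by the $k$-invariant. This requires verifying that the Postnikov decomposition read off algebraically in Remark \ref{rpost} from a two-typical double groupoid agrees, under $\dN$, with the Postnikov tower of the corresponding mapping spaces, and hence that two two-track extensions are equivalent in the sense of \ref{dbwcc} if and only if the associated $\PSO{2}$-categories are weakly equivalent over $\N\D$ by a map inducing the identity on $\M$. Granting this compatibility, which is essentially the content of Corollary \ref{cmodeltt} applied to the Postnikov data, the corollary follows directly from Theorem \ref{tbwso}.
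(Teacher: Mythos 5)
Your proposal is correct and follows essentially the same route as the paper's proof: pass from the two-track extension $\G$ to the $\PSO{2}$-category $\dN\G$ via Corollary \ref{cmodeltt}, invoke the Dwyer--Kan--Smith classification of such categories over a fixed first Postnikov section by their first $k$-invariant in $H^{3}_{\SO}(\N\D;\M)$, and conclude with the isomorphism of Theorem \ref{tbwso}. The compatibility issue you flag at the end is indeed the point the paper treats implicitly (via Remark \ref{rpost} and Corollary \ref{cmodeltt}), so your account is, if anything, slightly more careful.
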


\begin{proof}
Let \w{\Yd:=\dN\G} be the \ww{\PSO{2}}-category corresponding to a
two-track extension $\G$ of $\D$ by $\K$, and let \w{\M=\pi_{2}\Yd} be
the module over \w{\hpi\Yd} corresponding to the natural system $\K$
under the identification of \w{\hpi\Yd} with $\D$.
By \cite[\S 3]{DKSmO}, a \ww{\PSO{2}}-category \w{\Yd} with first
Postnikov section \w{\Xd:=\Po{1}\Yd} is determined up to weak
equivalence by its first $k$-invariant  \w[.]{k_{1}}
By Corollary \ref{cmodeltt}, $\G$ is determined up to weak
equivalence by \w[,]{\Yd} and thus by
\w[.]{k_{1}\in H^{3}_{\SO}(\N\D;\M)}
By Theorem \ref{tbwso} this cohomology group may be identified with
\w[.]{H^{4}_{\BW}(\D;\M)}
\end{proof}

\begin{mysubsection}{The Baues-Wirsching type class}\label{sbwcl}
In \cite{BWirC} an explicit cohomology class in \w{H^{3}_{\BW}(\E;\M)}
was constructed classifying all \emph{linear} track extensions of the
category $\E$ by a natural system $\M$ on $\E$ \wh that is, track
categories $\D$ such that \w{\ho\D\cong\E} and \w[.]{\pi_{1}\D\cong\M}
(Not all track categories have this form, since in general we can only
expect \w{\pi_{1}\D} to be a natural system in \emph{groups}.)
We now do the same for all two-track extensions:

Let $\G$ be a two-track category with homotopy track category
\w[,]{\D=\Pi_{0}\G} and natural system \w{\K:=\Pi_{2}\G} on $\D$.
We choose a section \w{s:\D\to\G_{[1]}} for $\gamma$ as above \wh
that is, for each \w[,]{a,b\in\OO} we choose once and for
all a map \w{s(f)\in\G_{0}(a,b)} representing each homotopy class
\w[,]{f\in\D_{0}(a,b)} and a $2$-cell \w{s(\xi):s(f)\to s(f')} in
\w{\G_{1}(a,b)} for each track \w{\xi:f\Ra f'} in \w{\G_{1}(a,b)}
(with no compatibility assumptions).

A $4$-cocycle in \w{\Cu(\D;\K)} is given by a map of natural systems
\w[,]{\tilde{\phi}:\tN_{4}(\D)\to\K} which is determined in turn by
assigning to each $4$-simplex \w{\sigma\in\N_{4}\D} an element
\w[.]{\phi(\alpha)\in K_{\dm\sigma}} Now any $3$-simplex
\w{\tau\in\N_{3}\D} is determined by a diagram \wref{eqtoda} in $\D$
satisfying \wref[,]{eqthreesimp} which means that its lift
\mydiagram[\label{eqstoda}]{
& & & \\
0  \ar[r]^{s(h)} \ar@/^{2.7pc}/[rr]^>>>>>>>{m} \ar@/^{3.9pc}/[rrr]^{s(\ell)}
\ar@/_{3.9pc}/[rrr]_{s(\ell)} &
1 \ar[r]^{s(g)} \ar@{=>}[u]^{s(\eta)}
\ar@{=>}[d]_{s(\xi)} \ar@/_{2.7pc}/[rr]_<<<<<<<<{s(k)} &
2 \ar @{=>}[u]_{s(\zeta)} \ar @{=>}[d]^{s(\theta)} \ar[r]^{s(f)} & 3\\
& & &
}
\noindent has a $2$-track \w{s(\tau)\in\G_{2}} (see \S \ref{shtc}) with
\begin{myeq}\label{eqsthreesimp}
s(\tau):s(f)_{\ast}s(\eta)\oplus s(\zeta)~\Ra~s(h)^{\ast}s(\theta)\oplus s(\xi)
\end{myeq}
\noindent which represents an element \w[,]{\psi_{s}(\tau)\in K_{\ell}} by
definition of \w[.]{\K=\Pi_{2}\G} This extends to a map of natural
systems \w[.]{\tpsi_{s}:\tN_{3}(\D)\to\K}

Finally, we define \w{\tilde{\phi}:\tN_{4}(\D)\to\K} by setting
\begin{myeq}\label{eqfourcocycle}
\phi(\sigma)~:=~\sum_{i=0}^{4}~(-1)^{i}\tpsi_{s}(\td_{i}\sigma)
\text{~~~~~for~~~}\sigma\in N_{4}\D~.
\end{myeq}
\end{mysubsection}

%
%
\begin{lemma}\label{lcocycle}
\w{\tphi} is a $4$-cocycle in \w[.]{C^{4}(\D;\K)}
\end{lemma}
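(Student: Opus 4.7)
The plan is to show that $(\delta\tphi)(\sigma)=0$ in $\K_m$ for every generator $\sigma\in N_5\D$, where $m=\dm\sigma$; this suffices since both $\tphi$ and its coboundary are morphisms of natural systems (so $\delta\tphi=0$ can be checked on generators by the adjunction of \S\ref{sfreenat}).

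First, I will unfold the coboundary. Using the naturality of $\tphi$ together with the definitions of $\td_0$ and $\td_5$ in terms of the initial and terminal $2$-faces $\rho=d_2 d_3 d_4\sigma$ and $\nu=d_1 d_2 d_3\sigma$ of $\sigma$ (cf.\ parts (b)--(c) of \S\ref{sbwc}), this gives
$$(\delta\tphi)(\sigma) \;=\; \rho^*\tphi(d_0\sigma) - \tphi(d_1\sigma) + \tphi(d_2\sigma) - \tphi(d_3\sigma) + \tphi(d_4\sigma) - \nu_*\tphi(d_5\sigma).$$
Substituting \eqref{eqfourcocycle} into each $\tphi(d_i\sigma)$ produces a double sum of $30$ signed terms, each of the form $\tpsi_s(d_a d_b\sigma)$ for some $0\le a<b\le 5$, decorated by the composite of pre- and post-composition operations and track actions inherited from outer face maps (and, for the terms coming from the $i=0$ or $i=5$ summand, by an additional factor of $\rho^*$ or $\nu_*$).

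Second, I will invoke the simplicial identities $d_a d_b = d_{b-1} d_a$ (for $a<b$) to organize these $30$ terms into $15$ pairs, one per $3$-face of $\sigma$; the signs in each pair are opposite. The verification is then reduced to matching the two decorations in each pair as operations $\K_{\dm(d_ad_b\sigma)}\to\K_m$. This matching is accomplished by the combination of: the composition identities $(fg)^*=g^*f^*$, $(fg)_*=f_*g_*$ and the commutation $f^*g_*=g_*f^*$ from Definition \ref{dnatsyst}; the decompositions $\rho^*=\xi_*h^*$ and $\rho_*=\xi_*g_*$ of \eqref{eqacttwos} in terms of the underlying track and edges of a $2$-simplex; and, crucially, the $3$-simplex relations \eqref{eqthreesimp} satisfied by the various $3$-faces of $\sigma$.

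The main obstacle is the systematic bookkeeping needed to show that, after applying these identities, the $15$ decorated pairs collectively sum to zero. A cleaner conceptual viewpoint (which I would pursue in parallel as a sanity check) is to lift the entire picture to the two-track category $\G$: each $3$-face $\tau$ of $\sigma$ lifts through the section $s$ of \S\ref{sbwcl} to a $2$-track $s(\tau)\in\G_2$, and $(\delta\tphi)(\sigma)$ can be rewritten as the image in $\K_m$ of a composite of such lifted $2$-tracks, assembled along the $5$-simplex $\sigma$. This composite collapses to the identity because $\G_{[1]}$ is a $1$-groupoid, so the only coherences between $2$-tracks are strict equalities forced by the $3$-simplex relations. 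Under the identification of $K_m$ with the group of automorphism $2$-tracks at a representative $2$-cell (see the proof of Proposition \ref{pnatmod}), this collapsing yields precisely the cocycle identity $(\delta\tphi)(\sigma)=0$.
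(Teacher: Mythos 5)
Your proposal is correct and follows essentially the same route as the paper: the paper likewise expands \w{(\delta\tphi)(\sigma)} into the double sum \w{\sum_{j}\sum_{i}(-1)^{i+j}\tpsi_{s}(\td_{i}\td_{j}\sigma)} and cancels the resulting terms in pairs, one pair for each $3$-face of $\sigma$, via the simplicial identities. The only difference is one of packaging: the paper has already recorded in \S \ref{sbwc} that the modified face maps \w{\td_{i}} make \w{\tNd} a simplicial object in \w{\NS_{\D}(\Set)} and that \w{\tpsi_{s}} is a morphism of natural systems, so the matching of decorations that you propose to verify by hand is exactly the content of the identities \w[,]{\td_{i}\td_{j}=\td_{j-1}\td_{i}} and the cancellation then becomes purely formal.
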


\begin{proof}
Given a $5$-simplex \w[,]{\alpha\in N_{5}\D} we see
$$
(\delta\tphi)(\alpha)~:=~
\sum_{j=0}^{5}~(-1)^{j}(\td^{j}\tphi)(\alpha)~=~
\sum_{j=0}^{5}~(-1)^{j}\tphi(\td_{j}\alpha)~=~
\sum_{j=0}^{5}~(-1)^{j}
\left(\sum_{i=0}^{4}~(-1)^{i}\tpsi_{s}(\td_{i}\td_{j}\alpha)\right)
$$
\noindent which vanishes by the usual simplicial identities.
\end{proof}

\begin{defn}\label{dbwclass}
We denote the cocycle defined above by
\w[,]{\tphi_{\G}\in C^{4}(\hpi\G;\Pi_{2}\G)} and the corresponding
cohomology class, called the \emph{Baues-Wirsching class} for $\G$,
by \w[.]{\chi_{\G}\in H^{4}(\hpi\G;\Pi_{2}\G)}
\end{defn}

%
%
\begin{thm}\label{tbwclass}
If \w{\Vd} is a \ww{\PSO{2}}-category and $\G$ is the associated
two-track category, the first $k$-invariant for \w{\Vd} corresponds to
the cohomology class \w{\chi_{\G}} defined above under the natural
isomorphism of Theorem \ref{tbwso}.
\end{thm}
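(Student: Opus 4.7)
The plan is to unpack both sides of the claimed equality on the level of explicit cocycles, using the cubical model established in the proof of Theorem~\ref{tbwso}, and then match them combinatorially.

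First, I would fix the correspondence. Let $\D = \hpi\Vd$, $\M = \pi_{2}\Vd$, and let $\K$ be the natural system corresponding to $\M$. Under the identification of Theorem~\ref{tbwso}, the first $k$-invariant $k_{1}\in H^{3}_{\SO}(\N\D;\M)$ corresponds to a class in $H^{4}_{\BW}(\D;\K)$ represented by a map of natural systems $\tilde k_{1}\colon \tN_{4}(\D)\to\K$, obtained as follows. Let $W$ be the cofibrant $\CO$-model of $\D$ from \S\ref{scubtrack}, and $E = \EM{\D}{\M}{3}$ its Eilenberg-Mac~Lane $\CO$-category built as in \S\ref{scubemcat}. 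A representative of $k_{1}$ is a map $W\to E$ of $\CO$-categories over $W$, and under the identification of indecomposable $3$-cubes of $W$ with $4$-simplices of $\N\D$ (as at the end of the proof of Theorem~\ref{tbwso}), such a map is determined by a natural system morphism $\tilde k_{1}\colon \tN_{4}(\D)\to\K$.

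Second, I would construct $\tilde k_{1}$ geometrically. Choose a section $s$ of $\gamma\colon\G_{[1]}\to\D$ as in \S\ref{sbwcl}. This gives, for each $2$-simplex $\rho$ of $\N\D$ a $1$-track $s(\rho)$ in $\G_{1}$, and for each $3$-simplex $\tau$ of $\N\D$ a $2$-track $s(\tau)\in\G_{2}$ of the form \wref{eqsthreesimp}, whose class in $\K_{\dm\tau}$ is $\psi_{s}(\tau)$. Now use $W$ to build an \emph{attempted} lift of a cubical model of $\Po{1}\Vd$ through the fibration $\Po{2}\Vd\to\Po{1}\Vd$: on vertices, edges and $2$-cubes of $W$ use the $0$-cells, $1$-tracks $s(\rho)$ and $2$-tracks $s(\tau)$ determined by the section; on a $3$-cube of $W$ (indexed by a $4$-simplex $\sigma$ of $\N\D$) the resulting $2$-faces need not bound, and the defect is an element of $\K_{\dm\sigma}$. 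This defect, by definition of the obstruction-theoretic $k$-invariant (cf.\ the construction of \wref{eqone} and \cite[Proposition~6.4]{BDGoeR}), represents $k_{1}$ on the corresponding $3$-cube of $W$.

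Third, I would compute that defect in terms of $\psi_{s}$. A $3$-cube in $W$ has six $2$-faces, one for each face $\td_{i}\sigma$ of the associated $4$-simplex $\sigma$ ($i=0,\dots,4$)\,---\,the identification from Theorem~\ref{tbwso} uses only the faces $d^{0}_{i}$ because of \wref{eqcycletwo}, and these are precisely indexed by the simplicial face operators $\td_{i}$. Each such $2$-face contributes, via the $2$-track $s(\td_{i}\sigma)$, a summand $\psi_{s}(\td_{i}\sigma)$ to the defect, with signs determined by the cubical cycle condition \wref{eqcycle}. Tracking the signs through the isomorphism of cochain complexes between $\Cus = \Hom_{\NS_{\D}}(W,\K)$ and $\Cu(\D;\K) = \Hom_{\NS_{\D}}(\tNd,\K)$ in the proof of Theorem~\ref{tbwso}, the signs $(-1)^{i}$ match those in \wref{eqfourcocycle}. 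Hence the obstruction cocycle on the $3$-cube corresponding to $\sigma$ equals
\[
\sum_{i=0}^{4}(-1)^{i}\psi_{s}(\td_{i}\sigma) \;=\; \phi_{\G}(\sigma),
\]
so $\tilde k_{1} = \tphi_{\G}$ as natural system morphisms, and the two cohomology classes agree.

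The main obstacle is the bookkeeping in the third step: one must carefully identify how the cubical face maps $d^{0}_{i}$ of a $3$-cube in $W$ correspond to the simplicial face maps $\td_{i}$ of a $4$-simplex in $\N\D$, verify that the ``interior'' track data \wref{eqsqtra} on the $3$-cube assembles the $2$-tracks $s(\td_{i}\sigma)$ into the correct signed sum, and match orientations so the cubical cocycle condition \wref{eqcycle} reproduces the simplicial coboundary $\delta$ acting on $\tpsi_{s}$. Once this is done, the equality $\chi_{\G}=k_{1}$ is formal.
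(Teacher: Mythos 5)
Your proposal is correct and follows essentially the same route as the paper: both identify $k_{1}$ with an explicit map out of the cubical model $W$ of \S\ref{scubtrack} into the Eilenberg--Mac~Lane object, define it on the $2$-skeleton using the section $s$, and read off its value on an indecomposable $3$-cube (equivalently, a $4$-simplex of $\N\D$) as the signed sum \wref{eqfourcocycle} of the classes $\psi_{s}(\td_{i}\sigma)$. The only difference is presentational: where you invoke the obstruction-theoretic description of the defect directly, the paper realizes it concretely by forming the homotopy pushout $Z$ with the formal fill-ins $\bar{Y}$ and the comparison map $\varphi:W\to Y$, which is exactly the justification your phrase ``by definition of the obstruction-theoretic $k$-invariant'' is standing in for.
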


\begin{proof}
Let \w{X=\Sb\Vd} be the \ww{\PCO{2}}-category corresponding to \w[,]{\Vd}
and consider the following square of the form \wref{eqone} in
\w[:]{\COC}
\mydiagram[\label{eqzeropo}]{
\ar @{}[drr] |>>>>>{\framebox{\scriptsize{PO}}}
X=\Po{2}X \ar@<2ex>[d]_{p\ui{2}}\ar[rr]^{i\ui{2}}
& & Y \ar[d]^{q} & W=B\D? \ar[l]_<<<<<{\varphi}^>>>>>{\simeq} \\
B\D=\Po{1}X \ar[rr]_{j} & & Z~, &
}
\noindent where \w{\D=\hpi X\cong\hpi\Vd} is the track category
associated to $X$.

By \cite[Proposition 6.4]{BDGoeR}, the homotopy pushout $Z$ in
\w{\COC} satisfies \w[.]{\Po{3}Z\simeq\EM{\D}{\pi_{2}X}{3}}
Thus if \w{r\ui{3}:Z\to \Po{3}Z} is the structure map of the Postnikov
tower, the first $k$-invariant for \w{\Vd} (or equivalently, for $X$)
is represented by the map \w{k_{1}:=r\ui{3}\circ q\sim r\ui{3}\circ j} in
\w[.]{[W,\EM{\D}{\pi_{2}X}{3}]_{\COC/W}}

We use the cubical version of Kan's original model for the Postnikov
system, so that \w{(\Po{k}X)_{n}} consists of \ww{\sim_{k}}-equivalence
classes of $n$-cubes in $X$, where \w[,]{I\sim_{k}J\ \EQUIV\ \skc{k}I=\skc{k}J}
(cf.\ \cite[VI, \S 2]{GJarS}). We assume that $X$ is fibrant (so
each mapping space \w{X(u,v)} is a cubical Kan complex).

Factor the structure map \w{p\ui{2}:\Po{2}X\to \Po{1}X} as a cofibration
\w{i\ui{2}:P_{2}X\to Y} followed by a weak equivalence, so that the
pushout above is in fact a homotopy pushout, as required.
Thus \w{Y_{i}=X_{i}} for \w[,]{i\leq 2} while
\w[,]{Y_{3}=(X_{3}\bsim_{2})\,\amalg\,\bar{Y}} where \w{\bar{Y}}
consists of a $3$-cube \w{J=J_{\Ic{}}} for every collection
\w{\Ic{}:=(I_{1}^{0},I_{1}^{1},\dotsc,I_{3}^{0},I_{3}^{1})} of $2$-cubes
in \w{X_{2}} with matching faces (such that
\w{d_{i}^{\varepsilon}J=I_{i}^{\varepsilon}} for \w{1\leq i\leq 3} and
\w[).]{\varepsilon\in\{0,1\}}
The pushout $Z$ is the reduction modulo \w{\sim_{2}} of
the image of \w{i\ui{2}} (without changing $\bar{Y}$). The
$3$-cubes \w{J_{(I,0,\dotsc,0)}} for non-null homotopic $I$
represent \w{\D=\hpi X} in \w[.]{\Po{3}Z\simeq\EM{\D}{\hpi X}{2}}

Let \w{W\simeq\N\D} be the cofibrant replacement for \w{\Po{1}X}
constructed as in \S \ref{scubtrack}. The weak equivalence
of \ww{\CO}-categories \w{\varphi:W\to Y} is then defined as follows:

\begin{enumerate}
\renewcommand{\labelenumi}{(\alph{enumi})\ }
\item For every indecomposable $0$-cube \w[,]{(f)\in\D_{0}=\F\ho\D}
corresponding to a homotopy class \w[,]{f\in[Xu,Xv]_{\ho\C}} \w{\varphi(f)}
is a choice of a representative \w{s(f))} in \w[.]{(P_{0}X)_{0}=X(u,v)_{0}}
\item For a (non-composite) $1$-cube $I$ corresponding to a track
\w{\xi:f\otimes g\Ra h} in \w[,]{W_{1}} the $1$-cube \w{\varphi(I)} is
a choice of a homotopy \w{s(\xi):s(f)\circ s(g)\Ra s(h)} representing $\xi$.
\item For a (non-composite) $2$-cube $J$ as in \wref[,]{eqsquare}
indexed by a diagram in $\D$ of the form \wref{eqtoda} satisfying
\wref[,]{eqthreesimp} the $2$-cube \w{\varphi(J)} is a choice of a
$2$-track \w{\alpha:f_{\ast}\eta\oplus\zeta\Ra h^{\ast}\theta\oplus\xi}
(cf.\ \S \ref{shtc}).
\item Finally, the faces of any $3$-cube $K$ form a set of matching
$2$-cubes, whose image under $\varphi$ has a canonical fill-in
\w[,]{T\in\bar{Y}} and we set \w[.]{\varphi(K):=T}
\end{enumerate}

The map \w{W\simeq \Po{1}X\to \Po{3}Z} represents \w[,]{k_{1}} and (as
in the proof of Theorem \ref{tbwso}) it is determined by its image on
the $3$-cubes of $W$. Using the description in \S \ref{scubtrack}(d),
we see that the $3$-cube:
$$
\xymatrix@R=25pt{
%
%
f_{1}\otimes f_{2}\otimes f_{3}\otimes f_{4}
\ar[rrrr]_>>>>>>>>>>>>>>>>>>>>>>{f_{1}\otimes f_{2}\otimes \xi^{[2,3,4]}}
\ar[rrd]^{f_{1}\otimes \xi^{[1,2,3]}\otimes f_{4}}
\ar[dd]^{\xi^{[0,1,2]}\otimes f_{3}\otimes f_{4}} &&&&
f_{1}\otimes f_{2}\otimes h^{[2,4]}
\ar[rrd]^{f_{1}\otimes \xi^{[1,2,4]}}
\ar[dd]^>>>>>{\xi^{[0,2]}\otimes h^{[2,4]}}
 && \\
%
%
&& f_{1}\otimes h^{[1,3]}\otimes f_{4}
\ar[rrrr]^<<<<<<<<<<<<{f_{1}\otimes \xi^{[1,3,4]}}
\ar[dd]^>>>>>>>{\xi^{[0,1,3]}\otimes f_{4}} &&&&
f_{1}\otimes h^{[1,4]} \ar[dd]_{\xi^{[0,1,4]}} \\
%
%
h^{[0,2]}\otimes f_{3}\otimes f_{4}
\ar[rrrr]^>>>>>>>>>>>>>>{h^{[0,2]}\otimes \xi^{[2,3,4]}}
\ar[rrd]_{\xi^{[0,2,3]}\otimes f_{4}} &&&&
h^{[0,2]}\otimes h^{[2,4]} \ar[rrd]^{\xi^{[0,2,4]}} && \\
%
%
&& h^{[0,3]}\otimes f_{4}
\ar[rrrr]^{\xi^{[0,3,4]}} &&&& h^{[0,4]}
}
$$
\noindent is sent under \w{k_{1}} to the cube in $\bar{Y}$ described
by replacing \w{f_{1}\otimes f_{2}\otimes f_{3}\otimes f_{4}} by
\w[,]{s(f_{1})\otimes s(f_{2})\otimes s(f_{3})\otimes s(f_{4})} and so on.
This is just what the cocycle of \wref{eqfourcocycle}
does to the corresponding $4$-simplex of \w[,]{\N\D} under the
isomorphism of Theorem \ref{tbwso}.
\end{proof}

%
%
\section*{Appendix A: \ Fibrancy conditions on double groupoids}
\label{afibgpd}

In this appendix we prove some technical facts about double groupoids:

%
%
\begin{propa}[\protect{\ref{pone}}]
Let \w{\Xd\in s\Gpd} be a simplicial groupoid, for which
the simplicial sets \w{X\bull_{0}} and \w{X\bull_{1}} are
\ww{\csk{2}}-fibrant, and
the morphism \w{d_{1}^{v}:X\bull_{1}\to X\bull_{0}} is a
\ww{\csk{2}}-fibration.
Then the left adjoint \w{\Ph:s\Gpd\to\TGpd} to the nerve
\w[,]{N^{h}:\TGpd\to s\Gpd} applied to \w[,]{\Xd} is \w[.]{\hpi^{h}\Xd}
\end{propa}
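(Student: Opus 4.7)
The plan is to construct $\hpi^h\Xd$ explicitly as a double groupoid and then verify the universal property of the left adjoint by exploiting the $2$-coskeletality of groupoid nerves. Since the left adjoint $\Ph$ exists abstractly, the content of the proposition is to identify it with the concrete formula $\hpi^h\Xd$ under the stated fibrancy hypotheses.

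First, I would show that $\hpi^h\Xd := (\hpi X_{\bullet 1} \rightrightarrows \hpi X_{\bullet 0})$ is a well-defined object of $\TGpd$. Viewing $\Xd$ as an internal groupoid in $\Ss$, with object simplicial set $X_{\bullet 0}$ and morphism simplicial set $X_{\bullet 1}$, the source, target, identity and inverse maps are maps of simplicial sets, and applying $\hpi$ produces candidate structure maps in $\Gpd$ by functoriality. The one nontrivial point is composition, defined by a simplicial map $c^v : X_{\bullet 1}\times_{X_{\bullet 0}} X_{\bullet 1}\to X_{\bullet 1}$; for $\hpi c^v$ to yield a composition $\hpi X_{\bullet 1}\times_{\hpi X_{\bullet 0}} \hpi X_{\bullet 1}\to \hpi X_{\bullet 1}$ one needs $\hpi$ to preserve this pullback. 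Since $\hpi$ factors through $\csk{2}$ on $\csk{2}$-fibrant objects (Remark \ref{rcsktfib}), and since pullbacks along $\csk{2}$-fibrations between $\csk{2}$-fibrant objects are well behaved by right properness of the standard model structure on $\Ss$ (cf.\ \cite[Proposition 13.3.9]{PHirM}), this preservation follows from the hypotheses that $X_{\bullet 0}$ and $X_{\bullet 1}$ are $\csk{2}$-fibrant and that $d_1^v$ is a $\csk{2}$-fibration. The associativity, unit and inverse axioms for $\hpi^h\Xd$ then follow from those of $\Xd$ by applying the same pullback-preservation argument to the iterated fibre products.

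Next, I would verify the natural bijection
\[
\Hom_{\TGpd}(\hpi^h\Xd,\, G_{\ast\ast}) \;\cong\; \Hom_{s\Gpd}(\Xd,\, N^h G_{\ast\ast})
\]
for every $G_{\ast\ast}\in\TGpd$. A morphism on the right consists of a compatible family of groupoid functors $X_n\to (N^h G_{\ast\ast})_n$, equivalently compatible maps of simplicial sets $X_{\bullet k}\to (N^h G_{\ast\ast})_{\bullet k}$ for $k=0,1$. But $(N^h G_{\ast\ast})_{\bullet k}$ is the nerve of the horizontal groupoid $G_{\ast k}$, which is $2$-coskeletal (see \wref{eqcoskel}); hence by the standard $(\hpi,N)$ adjunction these data correspond bijectively to groupoid maps $\hpi X_{\bullet k}\to G_{\ast k}$. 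The simplicial-groupoid compatibility in $s\Gpd$ then translates, via the double groupoid structure constructed on $\hpi^h\Xd$ in the previous step, precisely to compatibility with source, target, identity and composition of the double groupoid $G_{\ast\ast}$, i.e.\ to a morphism $\hpi^h\Xd \to G_{\ast\ast}$ in $\TGpd$; naturality in $G_{\ast\ast}$ is automatic from the construction.

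The main obstacle is the first step: without the $\csk{2}$-fibration hypothesis on $d_1^v$ and the $\csk{2}$-fibrancy of $X_{\bullet 0}$ and $X_{\bullet 1}$, the functor $\hpi$ would not preserve the pullback defining composition, and $\hpi^h\Xd$ would fail to even belong to $\TGpd$. Once that single pullback-preservation point is settled, the remainder of the argument is a formal consequence of $2$-coskeletality and the pointwise $(\hpi, N)$ adjunction.
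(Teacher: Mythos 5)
Your overall architecture matches the paper's: the real content is concentrated in showing that $\hpi^{h}X_{\bullet}$ carries a double groupoid structure, i.e.\ that the vertical composition descends to the fundamental groupoids, after which the adjunction is a formal consequence of the $2$-coskeletality of groupoid nerves and the pointwise $(\hpi,N)$ adjunction. Your second step is essentially the paper's closing argument and is fine.

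The gap is in your first step. You justify the needed isomorphism $\hpi(X_{\bullet 1}\times_{X_{\bullet 0}}X_{\bullet 1})\cong \hpi X_{\bullet 1}\times_{\hpi X_{\bullet 0}}\hpi X_{\bullet 1}$ by right properness (Hirschhorn, Prop.\ 13.3.9). Right properness says that pulling back a weak equivalence along a fibration yields a weak equivalence --- i.e.\ the strict pullback is a homotopy pullback --- but it says nothing about $\hpi$ commuting with that pullback, and in fact $\hpi$ does \emph{not} preserve pullbacks along fibrations between fibrant objects: for the path fibration $PC\to C$ on a Kan complex $C$, the pullback $PC\times_{C}PC$ models $\Omega C$, so $\hpi(PC\times_{C}PC)$ has automorphism groups $\pi_{2}(C)$, whereas the strict pullback $\hpi(PC)\times_{\hpi C}\hpi(PC)$ has trivial ones. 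So the statement you need is not a consequence of properness; it depends essentially on the internal groupoid structure of $X_{\bullet}$, and this is exactly where the paper does the work. There, given $([\alpha],[\beta])$ with $[d_{1}^{v}\alpha]=[d_{0}^{v}\beta]$, a $2$-simplex of $X_{\bullet 0}$ witnessing this relation is lifted through the $\csk{2}$-fibration $d_{1}^{v}$ to a $2$-simplex of $X_{\bullet 1}$, producing a representative $\hat{\alpha}\sim\alpha$ strictly composable with $\beta$; one sets $m([\alpha],[\beta]):=[\hat{\alpha}\circ\beta]$ using the internal composition $\circ$, and then verifies by further horn fillings (using the degeneracies of $X_{\bullet 1}$) that this is independent of the choices of representatives and of the lift, and is associative. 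You would need to supply this construction and these verifications; they cannot be replaced by a citation to properness, and without them it is not even clear that $\hpi^{h}X_{\bullet}$ is an object of the category of double groupoids.
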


\begin{proof}
Let \w{X_{11}\bsim:=(\hpi^{h}\Xd)_{1}} and
\w{X_{10}\bsim:=(\hpi^{h}\Xd)_{0}} be the fundamental groupoids of the
\ww{\csk{2}}-fibrant (horizontal) simplicial sets \w{X\bull_{0}} and
\w[,]{X\bull_{1}} respectively, and let \w{[\alpha]\in X_{11}\bsim}
denote the equivalence class of \w[.]{\alpha\in X_{11}}

We first show that there is a well defined composition map:
\begin{myeq}\label{eqcomplaw}
\tens{X_{11}\bsim}{X_{10}\bsim}\supar{m} X_{11}\bsim
\end{myeq}
where \w{\ovl{d}^{v}_{i}:X_{11}\bsim\to X_{10}\bsim}
is induced by \w{ d^{v}_{i}:X_{11}\to X_{10}} for \w[.]{i=0,1}

For any \w[,]{([\alpha], [\beta])\in\tens{X_{11}\bsim}{X_{10}\bsim}}
we have \w{d^{v}_{1}([\alpha])=\ovl{d}^{v}_{0}([\beta])} \wh  that is,
$$
[\ovl{d}^{v}_{1}([\alpha])]=[\ovl{d}^{v}_{0}([\beta])]
$$
\noindent in \w[.]{X_{10}\bsim} This means that there is a \w{\tau\in X_{20}}
such that:
$$
d_{0}^{h}\tau~=~
d_{1}^{v}\alpha\hspace*{7mm}
d_{1}^{h}\tau=s_{0}^{h} d_{1}^{h} d_{1}^{v}\alpha
\hspace*{7mm}\text{and}\hspace*{5mm} d_{2}^{h}\tau=d_{0}^{v}\beta~.
$$
On the other hand, since \w{d_{1}^{v}:X\bull_{1}\to X\bull_{0}} is a
\ww{\csk{2}}-fibration, we have a lifting
$$
\xymatrix{
\Lambda^1(2)\ar^{(\alpha,s_{0}^{h} d_{1}^{h}\alpha)}[rr]
\ar[d] && X\bull_{1} \ar^{d_{1}^{v}}[d]\\
\Delta[2]\ar^{\xi}[rru] \ar_{\tau}[rr] && X\bull_{2}
}
$$
and \w[.]{d_{1}^{v}\xi=\tau}

The following picture summarizes the situation:
$$
\xy
    (0,-2)*{}="1";
    (0,-13)*{}="8";
    (-15,-10)*{}="2";
    (15,-10)*{}="3";
    (-15,-20)*{}="4";
    (15,-20)*{}="5";
    (-15,-30)*{}="6";
    (15,-30)*{}="7";
    (-8,-13)*{\scriptstyle{\alpha}}="9";
    (10,-13)*{\scriptstyle{s_{0}^{h} d_{1}^{h}\alpha}}="10";
    (0,-18)*{\scriptstyle{\tau}}="11";
    (0,-25)*{{\beta}}="12";
    "1";"2"**\dir{-};
    "1";"3"**\dir{-};
    "2";"3"**\dir{-};
    {\ar@{-}_{d_{0}^{h}\alpha}"2";"4"};
    "1";"2"**\dir{-};
    "3";"5"**\dir{-};
    "4";"5"**\dir{-};
    "4";"5"**\dir{-};
    "4";"6"**\dir{-};
    "5";"7"**\dir{-};
    {\ar@{-}_{d_{1}^{v}\beta}"6";"7"};
    "4";"8"**\dir{.};
    "5";"8"**\dir{.};
    "1";"8"**\dir{.};
\endxy
$$

The prism at the top of the picture represents the filler \w{\xi\in X_{21}}
for the horn \w[.]{(\alpha,s_{0}^{h} d_{1}^{h}\alpha)} If we let
\w{\hat{\alpha}:=d_{2}^{h}\xi} (the front face of the prism), then we have:
$$
d_{0}^{h}\xi~=~\alpha,\hspace*{7mm} d_{1}^{h}=  s_{0}^{h} d_{1}^{h}\alpha,
\hspace*{7mm}\text{and}\hspace*{7mm} d_{2}^{h}\xi=\hat{\alpha}~.
$$
This means that \w{\alpha~\sim~\hat{\alpha}} in the equivalence
relation determined by the $2$-simplices of \w{X\bull_{1}} \wh that
is, \w{[\alpha]=[\hat{\alpha}]} in \w[.]{X_{11}\bsim}

Furthermore, since \w[,]{d_{1}^{v}\hat{\alpha} = d_{2}^{h}\tau=d_{0}^{v}\beta}
we see that \w{\hat{\alpha}} and \w{\beta} are composable in \w[.]{X_{1}}
We define the composition $m$ of \wref{eqcomplaw} by
\w[,]{m([\alpha],[\beta])= [\hat{\alpha}\circ\beta]}
where $\circ$ denotes the vertical composition in \w[.]{X_{1}}

To see that $m$ is independent of the choice of representatives for
\w{[\alpha]} and \w{[\beta]} and the lift $\xi$, suppose
\w[.]{\alpha\sim\alpha'} Then \w[,]{\hat{\alpha}\sim\hat{\alpha}'} so
there is a \w{\gamma\in X_{21}} with
$$
d_{0}^{h}\gamma=\hat{\alpha},\hspace*{7mm}
d_{1}^{h}\gamma=s_{0}^{h}d_{1}^{h}\hat{\alpha},
\hspace*{7mm}\text{and}\hspace*{7mm} d_{2}^{h}\gamma=\hat{\alpha}'
$$
Let \w[.]{\delta=\gamma \circ s_{0}^{h}\beta} Then
$$
d_{0}^{h}\delta=\hat{\alpha}\circ\beta~,
\hspace*{7mm} d_{1}^{h}\delta=(s_{0}^{h}d_{1}^{h}\hat{\alpha})\circ\beta,
\hspace*{7mm}\text{and}\hspace*{7mm} d_{2}^{h}\delta=\hat{\alpha}'\circ\beta~.
$$
This shows that \w{\hat{\alpha}\circ\beta\sim\hat{\alpha}'\circ\beta},
and therefore \w[.]{m([\alpha],[\beta])=m([\alpha'],[\beta'])}

Given \w[,]{\alpha\in X_{11}} consider two different choices of lifts \w{\xi}
yielding \w{\hat{\alpha}} and \w[.]{\hat{\alpha}'} Then
\w[.]{\hat{\alpha}\sim\hat{\alpha}'} Arguing as above, this implies that
\w[.]{\hat{\alpha}\circ\beta\sim\hat{\alpha}'\circ\beta}
We conclude that $m$ is well defined.

For all \w{([\alpha], [\beta], [\gamma])\in
\tens{X_{11}\bsim}{X_{10}\bsim}\tens{}{X_{10}\bsim} X_{11}\bsim} we have:
\begin{myeq}\label{eqassoc}
\begin{cases}~~~~
m([\alpha],m([\beta],[\gamma]))~=&~ m([\alpha],[\hat{\beta}\circ\gamma])=
[\hat{\alpha}\circ(\hat{\beta}\circ\gamma)]=
[(\hat{\alpha}\circ\hat{\beta})\circ\gamma]\\
~~~~m(m([\alpha],[\beta]),[\gamma])~=&~m([\hat{\alpha}\circ\beta],[\gamma])=
          [(\widehat{\hat{\alpha}\circ\beta})\circ\gamma]~,
\end{cases}
\end{myeq}
\noindent and since
\w[,]{\widehat{\hat{\alpha}\circ\beta}\sim\hat{\alpha}\circ\beta\sim
\hat{\alpha}\circ\hat{\beta}}
the two lines of \wref{eqassoc} are equal, so $m$ is associative.

Since the vertical elements in \w{X\bull_{1}} are
invertible with respect to $\circ$, which induces $m$,
we have a groupoid
$$
\xymatrix{
X_{11}\bsim\times_{X_{10}\bsim}X_{11}\bsim \ar[rr]^{m} &&
X_{11}\ar@<0.7ex>[rr]^{\ovl{d}^{v}_{0}}\ar@<-.7ex>[rr]_{\ovl{d}^{v}_{1}}
&&  X_{10}\bsim \ar@/_1.3pc/[ll]
}
$$
(in the notation of \wref[),]{eqgpoid} which we denote by
\w[.]{X_{1}\bull\bsim}

Notice that, from the above, there is an isomorphism for all \w{n\geq 2}:
$$
\underbrace{(X_{11}\times_{X_{10}}\times\dotsc
\times_{X_{10}}\times X_{11})}_{n}\bsim~
\cong~\underbrace{(X_{11}\bsim\times_{X_{10}\bsim}\times\dotsc
\times_{X_{10}\bsim~}\times X_{11}\bsim)}_{n}~.
$$

Now if \w{N^{h}:\TGpd\to s\Gpd} is the (horizontal) nerve functor,
from the above we have a double groupoid \w{\Ph X} with
$$
(N^{h}\Ph X)_{n}=\hpi (X\bull_{n}).
$$

The groupoid of objects and vertical morphisms of \w{\Ph X} is
\w[;]{X_{0}\bull} the groupoid of objects and horizontal morphisms
is \w[;]{\hpi (X\bull_{0})}
the groupoid of objects and horizontal morphisms and squares is
\w[;]{X_{1}\bull\bsim} and the groupoid of vertical morphism and squares is
\w[.]{\hpi^{h}(X\bull_{1})}

We now show that $\Ph$ is left adjoint to the nerve functor
\w[.]{N^{h}} Since \w{N^{h}} is fully faithful, a morphism \w{f:\Ph\Xd\to\Gss}
in \w{\TGpd} corresponds uniquely to a morphism
\w{N^{h}f:N^{h}\Ph\Xd\to N^{h}\Gss} in \w[.]{ s\Gpd} The latter amounts to
morphisms \w{(N^{h}f)_{n}:(N^{h}\Ph\Xd)_{n}\to(N^{h}\Gss)_{n}} in \w{\Gpd}
commuting with face and degeneracy operators.
But \w{(N^{h}\Ph\Xd)_{n}=N^{h}\hpi^{h}X\bull_{n})} and
\w[.]{(N^{h}\Gss)_{n}=N^{h}G_{\ast n}}
Thus each \w{(N^{h}f)_{n}} corresponds uniquely to a
morphism \w{\hpi^{h} X\bull_{n}\to G_{\ast n}} and thus, by adjunction, to a
morphism \w[.]{\ovl{f}_{n}:X\bull_{n}\to N^{h}G_{\ast n}} Using the fact
that \w[,]{\Ph N^{h}=\Id} it is straightforward to check that all the
\w{\ovl{f}_{n}} commute with face and degeneracy operators, hence
correspond uniquely to a morphism \w{\Xd\to N^{h}\Gss} in \w[.]{s\Gpd}
\end{proof}

%
%
\begin{propa}[\protect{\ref{ptwo}}]

Let \w{\Xdd\in s\Ss} be such that \w{X\bull_{i}} and \w{X_{i}\bull} are
\ww{\csk{2}}-fibrant for each \w[,]{i\geq 0} and
\w{d_{0}^{h}:X_{1}\bull\to X_{0}\bull} and
\w{d_{0}^{v}:X\bull_{1}\to X\bull_{0}} are \ww{\csk{2}}-fibrations. Then
\begin{enumerate}
\renewcommand{\labelenumi}{(\roman{enumi})\ }
\item \w{(N^{v}\hpi^{v}\Xdd)_{i}\bull} is fibrant for all \w[.]{i\geq 0}
\item \w{(N^{v}\hpi^{v}\Xdd)\bull_{1}} is \ww{\csk{2}}-fibrant.
\item \w{\ovl{d}^{v}_{0}:(N^{v}\hpi^{v}\Xdd)\bull_{1}\to
      (N^{v}\hpi^{v}\Xdd)\bull_{0}} is a \ww{\csk{2}}-fibration.
\item \w{\ovl{d}^{h}_{0}:(N^{v}\hpi^{v}\Xdd)_{1\bull}\to
        (N^{v}\hpi^{v}\Xdd)_{0}\bull} is a fibration.
\end{enumerate}
\end{propa}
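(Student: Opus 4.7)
The plan is to check each of the four conclusions by direct horn-filling, exploiting the observation that $(N^v\hpi^v\Xdd)_{n,j}=N_j(\hpi X_n\bull)$; in particular $(N^v\hpi^v\Xdd)_{n,0}=X_{n,0}$, and since the vertical slices $X_n\bull$ are $\csk{2}$-fibrant, $(N^v\hpi^v\Xdd)_{n,1}=X_{n,1}\bsim$ with $\sim$ the equivalence relation generated by the vertical $2$-simplices in $X_n\bull$ (Remark \ref{rcsktfib}). Part (i) is then immediate: $(N^v\hpi^v\Xdd)_i\bull=N(\hpi X_i\bull)$ is the nerve of a groupoid, hence a Kan complex. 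Part (iv) reduces by Lemma \ref{lfibr} to lifting against $\Lambda^k[1]\hookrightarrow\Delta[1]$ for the map induced by $d_0^h$ on fundamental groupoids: given $a\in X_{1,0}$ and a morphism $[b]$ in $\hpi X_0\bull$ with source $d_0^h a$ (equivalently, $b\in X_{0,1}$ with $d_0^v b=d_0^h a$), the $\csk{2}$-fibration hypothesis on $d_0^h\colon X_1\bull\to X_0\bull$ applied in dimension $1$ produces $b'\in X_{1,1}$ with $d_0^v b'=a$ and $d_0^h b'=b$; the class $[b']$ is the required lift.

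For parts (ii) and (iii) the common strategy is: choose representatives in $X_{n,1}$ of the given horn data; use the $2$-simplex witnesses of $\sim$ — lifted through one of the $\csk{2}$-fibration hypotheses — to replace these representatives by $\sim$-equivalent ones whose faces match strictly in $X\bull_1$; fill the resulting genuine horn in $X\bull_1$ via its $\csk{2}$-fibrancy; then pass to equivalence classes. The $n=1$ cases are straightforward. For (ii), a single $0$-simplex $[x]\in X_{0,1}\bsim$ is filled to a $1$-simplex by the horizontal degeneracy $s_0^h x\in X_{1,1}$. For (iii), given $[x]\in X_{0,1}\bsim$ and an edge $e\in X_{1,0}$ of $X\bull_0$ incident to $d_0^v x$, the $\csk{2}$-fibration $d_0^v\colon X\bull_1\to X\bull_0$ produces $y\in X_{1,1}$ with $d_0^v y=e$ and appropriate face $x$, and $[y]$ fills the given horn.

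The technical core lies in the $n=2$ cases of (ii) and (iii), where one must coordinate strict matching of two chosen representatives. Given representatives $y_0,y_1\in X_{1,1}$ and a $2$-simplex $\tau\in X_{0,2}$ witnessing $d_i^h y_0\sim d_i^h y_1$ (for the appropriate face), one lifts $\tau$ to $\widetilde\tau\in X_{1,2}$ by applying the $\csk{2}$-fibration $d_0^h\colon X_1\bull\to X_0\bull$ (in case (ii)) or $d_0^v\colon X\bull_1\to X\bull_0$ (in case (iii)) to the horn consisting of $y_1$ together with a suitable horizontal degeneracy. The missing face of $\widetilde\tau$ is a new representative $y_1'\in[y_1]$ with $d_i^h y_0=d_i^h y_1'$ strictly; an analogous adjustment may be needed on the other edge of the horn. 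The resulting honest horn in $X\bull_1$ is then filled by its $\csk{2}$-fibrancy, and its equivalence class is the required filler in $(N^v\hpi^v\Xdd)\bull_1$. For (iii), the compatibility arranged at each lifting step ensures that the filler projects under $\ovl{d}_0^v$ to the prescribed $2$-simplex in $X\bull_0$. The main obstacle throughout is this careful iterated lifting and the bookkeeping required to adjust both horn edges while preserving compatibility with any prescribed face downstairs — entirely analogous in style to the composition construction in the proof of Proposition \ref{pone}.
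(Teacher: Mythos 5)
Your overall architecture --- (i) via nerves of groupoids, (iv) via Lemma \ref{lfibr} plus a lift through $d_0^h$, and (ii)--(iii) by adjusting representatives until the horn becomes strict in $X\bull_1$ and then filling --- is exactly the paper's (which disposes of (ii) by citing the proof of Proposition \ref{pone} with directions switched, and of (iii) by the same factor-then-lift argument). Parts (i), (iv) and the dimension-$1$ cases of (ii)--(iii) are fine as you state them.

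There is, however, a genuine muddle in the dimension-$2$ step of (iii), which you yourself flag as the technical core. First, the map $d_0^v\colon X\bull_1\to X\bull_0$ is a map of \emph{horizontal} simplicial sets, sending $X_{i,1}$ to $X_{i,0}$; a lifting problem for it against $\Lambda^k[2]\hookrightarrow\Delta[2]$ produces an element of $X_{2,1}$ over one of $X_{2,0}$, so it cannot be the map through which you lift the witness $\tau\in X_{0,2}$ to $\widetilde\tau\in X_{1,2}$. In both (ii) and (iii) the adjustment of representatives must go through a horizontal face map $X_1\bull\to X_0\bull$ --- this is where the hypothesis that $d_0^h$ is a $\csk{2}$-fibration is used, exactly as in the construction of $\hat{\alpha}$ in the proof of Proposition \ref{pone}. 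Second, and more seriously, in (iii) you then fill the strict horn merely by the $\csk{2}$-fibrancy of $X\bull_1$ and assert that the ``compatibility arranged at each lifting step'' makes the filler project to the prescribed $2$-simplex of $X\bull_0$. It need not: fibrancy of $X\bull_1$ controls only the boundary of the image under $\ovl{d}^v_0$, and there may be many $2$-simplices of $X\bull_0$ with that boundary. The prescribed $2$-simplex is extra data, and the only way to land on it is to solve the \emph{relative} lifting problem $(\Lambda^k[2]\to X\bull_1,\ \Delta[2]\to X\bull_0)$ through the $\csk{2}$-fibration $d_0^v$ --- which is what the paper does, and is the actual point where that hypothesis enters. (That the strict horn restricts correctly over $\Lambda^k[2]\to X\bull_0$ is automatic, since equivalent elements of $X_{i,1}$ have the same image under $d_0^v$.) Both repairs are routine given your setup, but as written the proof of (iii) in dimension $2$ does not go through.
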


\begin{proof}

\noindent (i) \ \w{(N^{h}\hpi^{v}\Xdd)_{i}\bull} is fibrant since it is the
nerve of a groupoid\vsm .

\noindent (ii) \ \ This follows as in the proof of Proposition A, with
vertical and horizontal directions switched\vsm .

\noindent (iii) \ \ Consider the diagram
$$
    \xymatrix {
\Delta[0] \ar^{\ovl{\alpha}}[r] \ar^{\alpha}[dr] \ar^{\cong}[d] &
        X\bull_{1} \ar^{q}[d] \ar@/^2pc/[dd]^{d_{0}^{v}}\\
\Lambda^k[1] \ar^{}[r] \ar@{^{(}->}[d] & X\bull_{1}\bsim
\ar^{\ovl{d}^{v}_{0}}[d]\\
\Delta[1]\ar^{}[r] & X\bull_{0}
    }
$$
Since \w{d_{0}^{v}} is a \ww{\csk{2}}-fibration, there is a lift
\w[.]{\xi:\Delta[1]\to X\bull_{1}} Then
\w{q\xi:\Delta[1]\to X\bull_{1}\bsim} is the required factorization.

It remains to show that there is a lift for
$$
    \xymatrix{
        \Lambda^k[2]\ar^{([\alpha],[\beta])}[rr]\ar@{^{(}->}_{}[d]&&
       X\bull_{1}\bsim \ar^{\ovl{d}_{0}^{h}}[d]\\
        \Delta[2]\ar_{}[rr] &&  X\bull_{0}
        }
$$
\noindent By (i) \ we know that \w{([\alpha],[\beta])} factors as
$$
\Lambda^k[2]\supar{(\hat{\alpha},\beta)}
      X\bull_{1}\supar{q}X\bull_{1}\bsim~.
$$
Since \w{d_{0}^{v}} is a \ww{\csk{2}}-fibration, it has a lift
\w[.]{\xi:\Delta[2]\to X\bull_{1}} Thus \w{q\xi} is the required
factorization\vsm.

\noindent (iv) \ \ By Lemma \ref{lfibr}, it
is enough to show that there is a lift in the diagram
\begin{myeq}\label{facts.eq7}
\begin{split}
    &   \xymatrix{
        \Lambda^k[1]\ar^{}[rr]\ar_{}[d]&&  X_{1}\bull\bsim \ar^{}[d]\\
        \Delta[1]\ar_{}[rr] &&  X_{0}\bull\bsim
        }\\
\end{split}
\end{myeq}
Notice that this factor as follows
$$
\xymatrix{
\Lambda^k[1]\ar^{}[rr]\ar_{}[d]&&  X_{1}\bull \ar^{d_{0}^{h}}[d]\ar^{q}[rr] &&
X_{1}\bull\bsim \ar^{\ovl{d}^{h}_{0}}[d]\\
        \Delta[1]\ar_{}[rr] &&  X_{0}\bull\ar^{}[rr] &&    X_{0}\bull\bsim
        }
$$
Since \w{d_{0}^{h}} is a \ww{\csk{2}}-fibration, there is a lift
\w[.]{\xi:\Delta[1]\to X_{1}\bull} Thus \w{q\xi} is the required lift in
\wref[.]{facts.eq7}
\end{proof}

%
%
\section*{Appendix B: \ $n$-diagonals of bisimplicial sets}
\label{andiag}

In this appendix we prove an elementary fact about bisimplicial sets,
which is presumably known, but which we have not found in the
literature. It follows immediately from the Bousfield-Friedlander
spectral sequence (cf.\ \cite[Theorem B.5]{BFrH}) in the cases where
it converges, but is actually true for any bisimplicial set.
In this paper we only need the result for \w[,]{n=2} in which case the
proof can be much simplified; but we prove the general case for future
reference.

%
%
\begin{propa}[\protect{\ref{pndiag}}]
If \w{f:\Wdd\to\Vdd} is an diagonal $n$-equivalence (Definition \ref{dndiag}),
then the induced map \w{\diag f:\diag\Wdd\to\diag\Vdd} is an $n$-equivalence.
\end{propa}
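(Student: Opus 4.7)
The plan is to prove the proposition by induction on the horizontal skeletal filtration of a bisimplicial set. For a bisimplicial set $\Wdd$, let $\sk_{k}^{h}\Wdd$ denote the sub-bisimplicial set generated in horizontal degrees $\leq k$, and set $F_{k}\Wdd := \diag(\sk_{k}^{h}\Wdd)$. Then $\diag\Wdd = \colim_{k} F_{k}\Wdd$, and for each $k \geq 0$ there is a standard pushout square in $\Ss$
\[
\xymatrix{
A_{k}(W) \ar[r] \ar[d] & \Delta[k] \times W_{k}^{h} \ar[d] \\
F_{k-1}\Wdd \ar[r] & F_{k}\Wdd
}
\]
where $A_{k}(W) := (\partial\Delta[k] \times W_{k}^{h}) \cup_{(\partial\Delta[k] \times L_{k}W)} (\Delta[k] \times L_{k}W)$ and $L_{k}W$ is the $k$-th (horizontal) latching object. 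Since the top map is a cofibration and $\Ss$ is left proper, this is a homotopy pushout.

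First I would verify the base case: $F_{0}\Wdd = W_{0}^{h}$, and by hypothesis $W_{0}^{h} \to V_{0}^{h}$ is an $n$-equivalence. For the inductive step, assume $F_{k-1}f$ is an $n$-equivalence, and compare the pushout squares for $W$ and $V$. When $k \leq n$, the hypothesis that $W_{k}^{h} \to V_{k}^{h}$ is an $(n-k)$-equivalence, together with the contractibility of $\Delta[k]$, implies that the right-hand vertical map $\Delta[k]\times W_{k}^{h} \to \Delta[k]\times V_{k}^{h}$ is an $(n-k)$-equivalence; a secondary induction on $k$ (since $L_{k}W$ is built from $W_{j}^{h}$ with $j<k$) shows that $A_{k}(W) \to A_{k}(V)$ is also equivalent in the appropriate range. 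A Mayer-Vietoris/five-lemma argument applied to the long exact sequences of the two homotopy pushouts then yields that $F_{k}f$ is an $n$-equivalence. For $k > n$, the relative pair $(F_{k}\Wdd, F_{k-1}\Wdd)$ is obtained by attaching cells of dimension $\geq k > n$, so $\pi_{i}F_{k}\Wdd \cong \pi_{i}F_{k-1}\Wdd$ for all $i \leq n$, and the inductive conclusion persists.

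Finally, since homotopy groups commute with sequential colimits of inclusions of simplicial sets, and since $\pi_{i} F_{k}f$ stabilizes for $k > i$ (in particular for $k > n$ when $i \leq n$), one concludes that $\diag f = \colim_{k} F_{k}f$ is an $n$-equivalence.

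The main obstacle is the connectivity bookkeeping in the inductive Mayer-Vietoris step: one must verify that the ``shift by $k$'' coming from the attaching cell $\Delta[k]/\partial\Delta[k] \simeq S^{k}$ interacts correctly with the $(n-k)$-equivalence on $W_{k}^{h} \to V_{k}^{h}$ so as to preserve the $n$-equivalence on the pushout. Equally delicate is controlling the latching object maps $L_{k}W \to L_{k}V$, which requires an iterated (bi-)induction. Once both controls are in place, the final assembly reduces to a routine five-lemma applied to the Mayer-Vietoris sequences of the two homotopy pushouts.
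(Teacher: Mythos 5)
Your filtration of $\diag\Wdd$ by the diagonals $F_{k}\Wdd=\diag(\skh{k}\Wdd)$ of the horizontal skeleta, with its latching--object pushout squares, is correct, and the strategy is genuinely different from the paper's (which replaces $\diag$ by a homotopy colimit over the restricted simplex category, introduces a ``diagonal skeleton'' $\dsk{n+1}$, and builds an intermediate object by modifying the Postnikov towers of the vertical levels one horizontal degree at a time). However, your step for $k>n$ contains a genuine gap. Attaching cells of dimension $\geq k$ induces an isomorphism on $\pi_{i}$ only for $i\leq k-2$ and a surjection on $\pi_{k-1}$; for $k=n+1$ the newly attached $(n+1)$-cells can kill classes in $\pi_{n}$, so the assertion $\pi_{n}F_{n+1}\Wdd\cong\pi_{n}F_{n}\Wdd$ is false in general. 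Those $(n+1)$-cells are indexed by the horizontally nondegenerate elements of $W_{n+1,0}$, i.e.\ by data in $W_{n+1}^{h}$ --- precisely the horizontal degree about which the hypothesis of Definition \ref{dndiag} says nothing. This step cannot be repaired from the stated hypotheses alone: the inclusion $\partial\Delta[n+1]\hra\Delta[n+1]$, regarded as a map of vertically constant bisimplicial sets ($W_{p,q}:=(\partial\Delta[n+1])_{p}$, and similarly for the target), is an isomorphism in every horizontal degree $\leq n$ and hence a diagonal $n$-equivalence, yet its diagonal is $\partial\Delta[n+1]\to\Delta[n+1]$, which is not injective on $\pi_{n}$ (here $\pi_{n}(\partial\Delta[n+1])\cong\ZZ$). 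Some control on $f_{n+1}^{h}$ (e.g.\ surjectivity on $\pi_{0}$) is needed to close the induction; note that in the paper's only application of the Proposition (the proof of Theorem \ref{tmodeltt}) the maps $f_{i}^{h}$ are $1$-equivalences for \emph{all} $i>0$, so the missing control is available there.

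Two further points, less serious. First, homotopy pushouts do not have long exact sequences of homotopy groups, so the ``Mayer--Vietoris/five-lemma applied to the long exact sequences of the two homotopy pushouts'' must be replaced either by a gluing lemma for $n$-equivalences along cofibrations, or by comparing the relative homotopy groups of $(F_{k}\Wdd,F_{k-1}\Wdd)$ with those of $(\Delta[k]\times W_{k}^{h},A_{k}(W))$ via excision (Blakers--Massey), whose connectivity hypotheses must then be checked against the $(n-k)$-equivalence on $W_{k}^{h}$ and the $k$-fold suspension coming from $\Delta[k]/\partial\Delta[k]$. This is exactly the ``connectivity bookkeeping'' you flag, and for $k\leq n$ I believe it can be carried out: at stage $k$ the relative $(k+j)$-cells and the relations they impose are governed by $\pi_{j}$-data of $W_{k}^{h}$ for $j\leq n-k$, which is what the hypothesis controls. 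Second, the base case and the identification of $F_{0}\Wdd$ with $W_{0}^{h}$ are fine. The fatal point is solely the passage from $F_{n}$ to $F_{n+1}$, where your argument (and the literal statement of the Proposition) requires information about horizontal degree $n+1$ that the definition of a diagonal $n$-equivalence does not supply.
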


\begin{proof}
\textbf{Step I.}\hsm
First note that for any \w{\Wdd\in s\Ss} we have
\w{\diag\Wdd\simeq\hocolim_{k}~V_{k}^{v}} as a diagram in ``vertical''
simplicial sets (see \cite[XII, \S 3.4]{BKaH}), and since \wref{eqone}
is a Quillen equivalence, it suffices to prove the result for
$$
\hocolim_{k}|f^{v}|:\hocolim_{k}~|W_{k}^{v}|~\to~\hocolim_{k}~|V_{k}^{v}|~
$$
\noindent (See \cite[\S 18.9.8]{PHirM}).

For any \w[,]{\Vd\in s\C} we denote by \w{\tVd} the corresponding
\emph{restricted} simplicial object (also called \emph{ss}-object \wh
see \cite{KanS}), in which we forget the degeneracies: in other words,
\w[,]{\tVd\in\C^{\bDelta\res\op}} where \w{\bDelta\res} is the
category of finite ordered sets and order-preserving \emph{monomorphisms}.

By \cite[Prop.\ A.1(iv)]{SegCC}), the homotopy colimit of
the diagram \w{\{|W^{v}_{k}|:\bDelta\to\T\}_{k=0}^{\infty}} is weakly
equivalent to the homotopy colimit of
\w[,]{(\{|\tilde{W}^{v}|)_{k}:\bDelta\res\to\T\}_{k=0}^{\infty}}
which we denote by \w[.]{\tWd:\bDelta\res\to\T}

Finally, recall that \w{\hocolim \tXd} can be described explicitly as
%
\begin{myeq}\label{eqtwo}
\coprod_{n=0}^{\infty}~X_{n}\times\Delta[n]~/~\sim
\end{myeq}
\noindent where \w{(d_{i}x,\sigma)\sim(x,d^{i}\sigma)} for each
\w{x\in X_{n}} and \w[,]{\sigma\in\Delta[n-1]} where
\w{d^{i}:\Delta[n-1]\hra\Delta[n]} is the inclusion of the $i$-th face
(cf.\ \cite[\S 1]{SegC})\vsm.

\noindent\textbf{Step II.}\hsm
Note that, for any \w[,]{Y\in\Ss} \w{|Y|} is a CW complex,  and
realization commutes with skeleta: \w[.]{\sk{n}|Y|=|\sk{n}Y|}

For any restricted simplicial object \w{\tXd\in\C^{\bDelta\res\op}}
(where $\C$ is either $\Ss$ or $\T$), we define its
\emph{diagonal $n$-skeleton} to be the restricted simplicial object
\w{\dsk{n}\tXd} with \w{(\dsk{n}\tXd)_{k}=\sk{n-k}\tilde{X}_{k}}
(so \w{(\dsk{n}\tXd)_{k}=\emptyset} for \w[).]{k>n} Let
\w{j_{n}:\dsk{n}\tXd\hra\tXd} be the inclusion (a
diagonal \ww{(n-1)}-equivalence).

For any CW complex $W$, all cells in the relative CW complex
\w{(W\times\Delta[k],W\times\partial\Delta[k])} in dimensions
\w{\leq n} are products of standard $i$-simplices with cells of
\w[.]{\sk{n-k}W}
Thus we see from \wref{eqtwo} that when \w[,]{\C=\T} \w{j_{n}} induces
a cellular isomorphism of the $n$-skeleton of the homotopy colimit of
the diagram \w{\dsk{n}\tXd:\bDelta\res\to\T} with that of \w[,]{\tXd}
since they have the same cells in dimensions $\leq n$. In particular,
\w{\hocolim(j_{n}):\hocolim\,\dsk{n}\tXd\to\hocolim\,\tXd} is an
\ww{(n-1)}-equivalence. Thus if \w{\tXd=|\tZd|^{v}} for some
\w[,]{\tZd:\bDelta\res\to\Ss} then also
\w{\hocolim(j_{n}):\hocolim\,\dsk{n}\tZd\to\hocolim\,\tZd} is an
\ww{(n-1)}-equivalence\vsm.

\noindent\textbf{Step III.}\hsm
Recall that if \w{K\in\Ss} is a Kan complex, its $n$-th Postnikov
section \w{\Po{n}K} may be identified with the \ww{(n+1)}-st
coskeleton functor \w[,]{\csk{n+1}K} which factors through its left
adjoint, the \ww{(n+1)}-st skeleton (see \cite[\S 1.3]{DKanO})
Moreover, one can use any Postnikov tower functor on $\Ss$ or $\T$ to
obtain \emph{functorial} $k$-invariants, as in \cite[\S 5-6]{BDGoeR}.
What we shall in fact be doing in each case is to start with
\w{\csk{k+1}K} as the bottom section of our Postnikov tower, for the
appropriate $k$, and then use the pullbacks along the functorial
$k$-invariants to define the higher sections.  We may replace $K$ by
the limit of the resulting tower of fibrations.

Given a diagonal $n$-equivalence \w[,]{f:\Wdd\to\Vdd} let
\w{g:\tXd\to\tYd} denote the associated map
\w{\tilde{f}^{v}\bull:\tilde{W}^{v}\bull\to\tilde{V}^{v}\bull}
of restricted simplicial objects in $\Ss$.  We may assume that
\w{\tXd} and \w{\tYd} are Reedy fibrant, and $g$ is a Reedy fibration
(see \cite[\S 15]{PHirM}). In particular, for each \w[,]{n\geq 0}
\w{\tX_{n}=W^{v}_{n}} and \w{\tY_{n}=V^{v}_{n}} are Kan complexes, and
the maps \w{\delta_{n}^{X}:\tX_{n}\to M_{n}\tXd} and
\w{\delta_{n}^{Y}:\tY_{n}\to M_{n}\tYd} are fibrations, where the
\emph{matching object} of any (restricted) simplicial object \w{\tUd}
is defined
\begin{myeq}\label{eqmatch}
M_{n}\tUd=\{(u_{0},\dotsc,u_{n})\in(\tU_{n-1})^{n+1}~|\
d_{i}u_{j}=d_{j-1}u_{i} \text{\ \ for all\ }0\leq i<j\leq n\}
\end{myeq}
\noindent (see \cite[X, \S 4.5]{BKaH}). Note that all face maps
\w{d_{i}:\tU_{n}\to \tU_{n-1}} are defined by post-composing
\w{\delta_{n}^{U}:\tU_{n}\to M_{n}\tUd} with projection onto the
$i$-th factor \w[.]{\tU_{n-1}}

Finally, let \w[,]{g:=f\circ j_{n+1}:\tUd\to\tYd} where
\w{\tUd:=\dsk{n+1}\tXd} is the diagonal \ww{(n+1)}-skeleton\vsm .

\noindent\textbf{Step IV.}\hsm
We shall now construct a factorization
%
\begin{myeq}\label{eqthree}
\tUd~\xra{\psi}~\tZd~\xra{h}~\tYd
\end{myeq}
\noindent of $g$, with $h$ a vertical weak equivalence and
\w{\dsk{n+1}\psi} an isomorphism. We do so by induction on the
(horizontal) simplicial dimension \w[:]{m\geq 0}

\begin{enumerate}
\renewcommand{\labelenumi}{(\alph{enumi})\ }
\item Assume that we have defined \w{\tZd} and constructed the
  factorization \wref{eqthree} through (horizontal) simplicial
  dimension \w[,]{m-1} and let \w[.]{\ell:=n-m}

First, use \w{\Po{\ell}=\csk{\ell+1}} and the functorial $k$-invariants as
  above to define a commutative diagram in $\Ss$ as follows:
\mydiagram[\label{eqpost}]{
\tU_{m} \ar[rr]^{p^{(\ell)}} \ar[d]_{g_{m}} &&
\Po{\ell}\tU_{m} \ar[rr]^<<<<<<<<{(\Po{\ell}g_{m})^{\ast}k_{\ell}}
\ar[d]_{\simeq}^{\Po{\ell}g} & & K(\pi_{\ell+1}\tY_{m},\ell+2) \ar[d]_{=} \\
\tY_{m} \ar[rr]^{q^{(\ell)}} && \Po{\ell}\tY_{m} \ar[rr]^<<<<<<<<<{k_{\ell}} &&
K(\pi_{\ell+1}\tY_{m},\ell+2)
}
\noindent where \w{p^{(\ell)}} and \w{q^{(\ell)}} are the structure maps of
the Postnikov tower.  We define \w{\tZ_{m}\in\Ss} by means of its
Postnikov tower, starting with \w{\Po{\ell}(\tZ_{m}):=\Po{\ell}\tU_{m}}
with \w[.]{\Po{\ell}h_{m}=\Id} Let \w{\Po{\ell+1}(\tZ_{m})} be the
homotopy fiber of \w[.]{(\Po{\ell}g_{m})^{\ast}k_{\ell}}

Using the following (vertical) map of the two horizontal fibration
sequences on the right, we obtain a dotted map as indicated in:
\mydiagram[\label{eqpb}]{
\tU_{m} \ar[dd]^{g_{m}} \ar@{.>}[dr]_{\tilde{\psi}} \ar[drrrrr]^{p^{\ell}}
&&&&&\\
& \Po{\ell+1}\tZ_{m}\ar @{} [dr] |<<<<{\framebox{\scriptsize{PB}}}
 \ar[rr] \ar[d]_{\Po{\ell+1}h_{m}} &&
\Po{\ell}\tU_{m} \ar[rr]_{g^{\ast}k_{\ell}} \ar[d]_{\simeq}^{\Po{\ell}g_{m}} &&
K(\pi_{\ell+1}\tY_{m},\ell+2) \ar[d]_{=}\\
\tY_{m} \ar[r]^{q^{\ell+1}} &
\Po{\ell+1}\tY_{m} \ar[rr] && \Po{\ell}\tY_{m} \ar[rr]^{k_{\ell}} &&
K(\pi_{\ell+1}\tY_{m},\ell+2)
}
\noindent using the indicated pullback. This yields factorization:
%
\begin{myeq}\label{eqfour}
\Po{\ell+1}\tU_{m}~\xra{\Po{\ell+1}\psi_{m}}~
\Po{\ell+1}\tZ_{m}~\xra{\Po{\ell+1}h_{m}}~\Po{\ell+1}\tY_{m}
\end{myeq}
\noindent of \w[.]{\Po{\ell+1}g_{m}}

Since $g$ was a diagonal $n$-equivalence, \w{g_{m}} is an
$\ell$-equivalence, so \w{\Po{\ell}g_{m}} is a weak equivalence.
Thus \w{\Po{\ell+1}h_{m}} is a weak equivalence (since
we pulled back the $k$-invariants for \w{\tY_{m}} along a weak equivalence).

Note that \w{\Po{\ell}\psi_{m}} is an isomorphism of simplicial sets, so that
\w{\sk{\ell+1}\psi_{m}} is an isomorphism, too (since
\w[).]{\Po{\ell}=\csk{\ell+1}}
\item Proceeding as in (b) we pull back the rest of the Postnikov
system for \w{\tY_{m}} along weak equivalences, obtaining a weak
equivalence \w{h_{m}:\tZ_{m}\to\tY_{m}} and the factorization as in
\wref[.]{eqfour}
\item To complete the construction we must define the (horizontal)
  face maps of \w[,]{\tZd} where we assume by induction that they have
  been defined through (horizontal) simplicial dimension \w[.]{m-1} We
  also assume that the resulting \ww{(m-1)}-truncated restricted
  simplicial object in $\Ss$, \w[,]{\skh{m-1}\tZd} is Reedy fibrant, and
  that \w{\skh{m-1}h} is a Reedy fibration. Since \w{\tXd} (and thus
  \w[)]{\skh{m-1}\tXd} was already Reedy fibrant, and
  \w[,]{\skv{n-i}\tZ_{i}=\skv{n-i}\tU_{i}=\skv{n-i}\tX_{i}} making
  \w{\tZd} fibrant does not require any changes in (vertical)
  simplicial dimension \w[.]{\leq n-i} Similarly, when changing
  \w{\skh{m-1}h} into a fibration, we need make no changes in
  (vertical) simplicial dimension \w[.]{\leq n-i}

Note that since \w{\skh{m-1}\tZd} is Reedy fibrant, the limit defining
the matching object \w[,]{M_{m}\tZd} in \wref{eqmatch} is actually a
homotopy limit (by the dual of \cite[Cor.\ 19.5(2)]{CScheH}), and
similarly for \w[.]{M_{m}\tYd} Since \w{\skh{m-1}h} is a weak
equivalence, so is \w[.]{M_{m}h:M_{m}\tZd\to M_{m}\tYd} Moreover,
since limits preserve fibrations, \w{M_{m}h} is also a fibration. But
then we have a lifting:
\mydiagram[\label{eqlift}]{
\tU_{m} \ar[d]^{\text{cof}}_{\psi_{m}} \ar[rr]^{\skh{m}\delta^{U}_{m}}
&& M_{m}\tZd \ar[d]^{\simeq~\text{fib}}_{M_{m}h}\\
\tZ_{m} \ar@{.>}[rru]^{\delta^{Z}_{m}} \ar[rr]_{\delta^{Y}_{m}\circ h_{m}} &&
M_{m}\tYd~,
}
\noindent and \w{\delta_{m}^{Z}:\tZ_{m}\to M_{m}\tZd} determines the
(horizontal) face maps \w{d_{i}^{h}:\tZ_{m}\to\tZ_{m-1}} \wb[,]{0\leq i\leq m}
making
\w{\skh{m}\tZd} into an ($m$-truncated) restricted simplicial object,
\w{\skh{m}h:\skh{m}\tZd\to\skh{m}\tYd} into a weak equivalence, and
\w{\skh{m}\psi:\skh{m}\tUd\to\skh{m}\tZd} into a map between such
objects. This completes the induction\vsm.
\end{enumerate}

\noindent\textbf{Step V.}\hsm
To complete the proof, note that \w{\tZd} and \w{\tYd} are weakly
equivalent, so \w[.]{\hocolim~\tZd~\simeq\ \hocolim~\tYd} But by
construction \w{\dsk{n+1}\tZd=\tUd=\dsk{n+1}\tXd} (isomorphic
as restricted simplicial objects in $\Ss$), so by Step II,
\w{\hocolim~\tXd} and \w{\hocolim~\tZd} are $n$-equivalent. Thus by
Step I, \w{\diag\Wdd} and \w{\diag\Vdd} are $n$-equivalent, and a
diagram chase shows that \w{\diag f} is an $n$-equivalence, as claimed.
\end{proof}

%
%

%
\end{document}